\def\TC{\protect\operatorname{TC}}
\def\pr{\protect\operatorname{pr}}
\def\er{\protect\operatorname{er}}
\def\zcl{\protect\operatorname{zcl}}
\def\cat{\protect\operatorname{cat}}
\def\hdim{\protect\operatorname{hdim}}
\def\cl{\protect\operatorname{cl}}
\newcommand{\overbar}[1]{\mkern 1.5mu\overline{\mkern-1.5mu#1\mkern-1.5mu}\mkern 1.5mu}
\newtheorem{ejem}{Example}
\newtheorem{defi}[ejem]{Definition}
\newtheorem{teo}[ejem]{Theorem}
\newtheorem{prop}[ejem]{Proposition}
\newtheorem{lema}[ejem]{Lemma}
\newtheorem{remark}[ejem]{Remark}
\newtheorem{coro}[ejem]{Corollary}
\numberwithin{ejem}{section}
\begin{document}

\author{Jorge Aguilar-Guzm\'an, Jes\'us Gonz\'alez and Teresa Hoekstra-Mendoza}
\title{Farley-Sabalka's Morse-theory model and the higher topological complexity of ordered configuration spaces on trees}
\maketitle
\begin{abstract}
Using the ordered analogue of Farley-Sabalka's discrete gradient field on the configuration space of a graph, we unravel a levelwise behavior of the generators of the pure braid group on a tree. This allows us to generalize Farber's equivariant description of the homotopy type of the configuration space on a tree on two particles. The results are applied to the calculation of all the higher topological complexities of ordered configuration spaces on trees on any number of particles.
\end{abstract}

\small {\it 2010 Mathematics Subject Classification}: Primary: 20F36, 55R80, 57M15; Secondary: 55M30, 57Q10, 68T40.

\small {\it Keywords and phrases}: Discretized configuration space on a tree, discrete Morse theory, Farley-Sabalka gradient field, topological complexity.

\section{Introduction}
Michael Farber proved in \cite{mp} that the ordered configuration space of two particles on a tree has the homotopy type of a banana graph, i.e., a graph with two vertices and no loops. Farber's argument uses classic algebraic topology methods. The present paper arose from the observation that Farber's result can be recovered and generalized from techniques in discrete Morse theory, as implemented in~\cite{fs} for unordered configuration spaces on graphs. Farley-Sabalka's discrete Morse theory approach is particularly strong: configuration spaces of graphs are known to be aspherical, and Farley-Sabalka's method provides us with an algorithmic way to obtain a presentation of their fundamental groups (see~\cite{MR2949126}).

Farley-Sabalka's model was developed for unordered configuration spaces. The existence of a corresponding model in the ordered configuration case was noticed in Safia Chettih's Ph.D.~thesis \cite[Proposition~2.2.3]{sc} without providing (or making use of) an explicit description. We start by extending Farley-Sabalka model to the realm of ordered configurations. The construction presented here parallels that in \cite{fs}, though we slightly reorganize and streamline a few of the arguments in~\cite{fs}. As in the original (unordered) case, having an explicit description of Farley-Sabalka's model in the ordered situation has the advantage of providing us with easy means to understand, in combinatorial and topological terms, a number of algebro-topological arguments in the literature about ordered configuration spaces on graphs. For example, it will be transparent that the ordered version of Farley-Sabalka's discrete model is equivariant with respect to the coordinate-permutation action of the symmetric group. In particular, Farley-Sabalka's techniques allow us to generalize Farber's \emph{equivariant} description~\cite[Theorem~11.1]{mp} of the homotopy type of the 2-particle ordered configuration space on a tree. Indeed, the equivariance in Farley-Sabalka's model for the ordered situation means that we get a complete description of the monodromy in the usual covering map from ordered to unordered configurations.

We use the resulting tool to address combinatorial properties in the 1-skeleton of the Morse theoretic model of an $n$-particle ordered configuration space on a tree. In particular, this yields a description of the equivariant homotopy type of ordered configuration spaces on trees with at most three particles. For configuration spaces with more particles, the relevance of our work is better understood in terms of~\cite[Theorem 2.5]{fs}, where Farley and Sabalka describe a discrete Morse theory method to produce a presentation for the fundamental group of a given simplicial complex $X$ equipped with a discrete gradient field. The method involves the study of certain complexes $X'_i$, $i=1,2$, which capture essential (gradient-field type) information of the $i$-th skeleton of $X$: $X_1'$ ($X_2'$) provides generator-type (relation-type) information for a (Morse-theory algorithmic) presentation of $\pi_1(X)$. In these terms, we give in Section~\ref{secciondecombinatoria} a complete description of the combinatorio-geometric properties of $X'_1$ when $X$ is Farley-Sabalka's discrete Morse model for the ordered configuration space on a tree on any number of particles.

We also use Farley-Sabalka's ordered discrete model in order to compute the Lusternik-Schnirelmann category (cat) and all the higher topological complexities (TC$_s$, $s\geq2$) of the ordered configuration space on a tree. Such a result was originally obtained by Farber \cite{mp} in the case of cat and TC$_2$ under the additional hypothesis that the number of particles is no smaller than twice the number of essential vertices of the tree. The unrestricted result (for TC$_2$) was then obtained in \cite{LRm} by consideration of graph configuration spaces with sinks, i.e., where collisions on certain vertices are allowed. See also~\cite{MR3773741}, where the $\TC_2$ calculation is carried over (both ordered and unordered) configuration spaces on trees with $n$ particles, for $n$ satisfying certain technical conditions. Our approach (i.e., the direct generalization of Farber's original argument without using configurations with sinks) is both conceptually and computationally simpler, even in the case of higher topological complexity.


\section{Preliminaries}
\subsection{Topological complexity}
For $s\geq2$, the $s$-th topological complexity of a path-connected space $X$, $\TC_s(X)$, is defined as the sectional category of the evaluation map $e_s\colon PX\to X^s$ which sends a (free) path on $X$, $\gamma\in PX$, to the $s$-tuple$$e_s(\gamma)=\left(\gamma\left(\frac{0}{s-1}\right),\gamma\left(\frac{1}{s-1}\right),\ldots, \gamma\left(\frac{s-1}{s-1}\right)\right).$$ As in~\cite{CLOT}, we use the term ``sectional category'' in the reduced sense. In other words, $\TC_s(X)+1$ stands for the smallest number of open sets covering $X^s$ in each of which $e_s$ admits a section. We agree to set $\TC_1(X):=\cat(X)$, the Lusterenik-Schnirelmann category of $X$ (also taken in the reduced sense).

A standard estimate for the $s$-th topological complexity of a space $X$ is given by:
\begin{prop}[{\cite[Theorem~3.9]{bgrt}}]\label{ulbTCnrararar}
For a $c$-connected space $X$ having the homotopy type of a CW complex, 
$$\cl(X)\leq\cat(X)\leq \hdim(X)/(c+1)
\quad\mbox{and}\quad
\zcl_s(X)\leq\TC_s(X)\leq s\cdot \cat(X).$$
\end{prop}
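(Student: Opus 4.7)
The plan is to split the proposition into its four component inequalities and address each via a standard sectional-category argument. The unifying observation is that both $\cat(X)$ and $\TC_s(X)$ are sectional categories of specific fibrations: $\cat(X)$ is the sectional category of the based path fibration $P_0X\to X$, and $\TC_s(X)$ is that of $e_s$ by the definition just given. Both lower bounds then follow from a single principle: if $p\colon E\to B$ has sectional category $k$, then any $(k+1)$-fold product of classes in $\ker(p^*)$ vanishes. Indeed, a local section $\sigma_i\colon U_i\to E$ factors the inclusion $U_i\hookrightarrow B$ through $E$ up to homotopy, so each class in $\ker(p^*)$ restricts trivially to $U_i$ and thus lifts to $H^*(B,U_i)$; multiplying $k+1$ such classes produces an element of $H^*(B,\bigcup U_i)=H^*(B,B)=0$. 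For $p=P_0X\to X$ the kernel is $\widetilde{H}^*(X)$, yielding $\cl(X)\leq\cat(X)$; for $p=e_s$ the kernel is the ideal of $s$-fold zero-divisors (since $e_s$ factors homotopically through the diagonal), yielding $\zcl_s(X)\leq\TC_s(X)$.

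For the dimensional upper bound $\cat(X)\leq\hdim(X)/(c+1)$, I would replace $X$ by a minimal CW model of dimension $n=\hdim(X)$ and exploit $c$-connectedness to absorb all cells of dimension $\leq c$ into a single vertex. Partitioning the remaining cells by dimension into $\lceil n/(c+1)\rceil$ consecutive blocks of width $c+1$ and building corresponding open thickenings that are each contractible in $X$ produces a categorical cover of the claimed cardinality. This is the classical Ganea--James bound.

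Finally, the upper bound $\TC_s(X)\leq s\cdot\cat(X)$ is obtained via the two-step chain $\TC_s(X)\leq\cat(X^s)\leq s\cdot\cat(X)$. A categorical open set $U\subset X^s$ contracts within $X^s$ to a point $(x_1,\ldots,x_s)$, and this contraction supplies, for each $y=(y_1,\ldots,y_s)\in U$, an $s$-tuple of paths from $y_i$ to $x_i$ which can be concatenated (after reparametrization) into a single path hitting $y_0,\ldots,y_{s-1}$ at the prescribed times, giving a continuous section of $e_s$ over $U$ and hence $\TC_s(X)\leq\cat(X^s)$. The second step is the iteration of the classical product subadditivity $\cat(Y\times Z)\leq\cat(Y)+\cat(Z)$. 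Among the four arguments, the cell-block construction used for the dimensional upper bound is the technically most delicate piece, as one must verify that the open thickenings are genuinely categorical in $X$ and jointly cover it; the remaining steps are short and essentially formal consequences of the sectional-category framework.
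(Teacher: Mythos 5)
The paper does not prove this proposition: it is quoted verbatim from \cite[Theorem~3.9]{bgrt} and used as a black box, so there is no in-paper argument to compare against. That said, your sketch is essentially the standard proof of this standard result, and it is correct in its main lines: the two lower bounds are special cases of Schwarz's cohomological lower bound for sectional category applied to $P_0X\to X$ and to $e_s$ (with the identifications $\ker(p^*)=\widetilde H^*(X)$ and $\ker(e_s^*)=\ker(\Delta_s^*)$ respectively), the dimensional upper bound is the classical connectivity/dimension estimate for $\cat$, and the $\TC_s$ upper bound is $\TC_s(X)\leq\cat(X^s)\leq s\cdot\cat(X)$.

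Two small imprecisions are worth flagging. First, in the cell-block argument: after collapsing a minimal CW model so that all cells above the basepoint live in dimensions $c+1,\ldots,n$, there are $n-c$ remaining dimensions, which partition into $\lceil (n-c)/(c+1)\rceil=\lfloor n/(c+1)\rfloor$ blocks of width $c+1$; your count of $\lceil n/(c+1)\rceil$ blocks overshoots by one when $(c+1)\nmid n$ and, together with the extra open set around the vertex, would give only $\cat(X)\leq\lceil n/(c+1)\rceil$, which is weaker than claimed. Second, in the step $\TC_s(X)\leq\cat(X^s)$, the contraction of a categorical $U$ lands at a point $(x_1,\ldots,x_s)$ that need not be diagonal, so concatenating the paths $\gamma_i\colon y_i\rightsquigarrow x_i$ does not yet produce a single path through $y_1,\ldots,y_s$; you must also fix, once and for all, paths $\delta_i\colon x_i\rightsquigarrow x_{i+1}$ (possible since $X$ is path-connected) and use $\gamma_i*\delta_i*\gamma_{i+1}^{-1}$ on each subinterval, or, more cleanly, invoke the homotopy lifting property of the fibration $e_s$ applied to the null-homotopy of $U\hookrightarrow X^s$. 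Neither issue affects the overall validity of the approach, but both would need to be tightened in a full write-up.
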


The notation $\hdim(X)$ stands for the (cellular) homotopy dimension of $X$, i.e.~the minimal dimension of CW complexes having the homotopy type of $X$. On the other hand (and for our purposes), the cup-length of $X$, $\cl(X)$, and the $s$-th zero-divisor cup-length of $X$, $\zcl_s(X)$, are defined in purely cohomological terms\footnote{All cohomology groups in this paper are taken with $\mathbb{Z}$-coefficients.}. The former one is the largest non-negative integer $\ell$ such that there are positive-dimensional classes $c_j\in H^*(X)$, $1\leq j\leq \ell$, with non-zero cup-product $c_1\cdots c_\ell\in H^*(X)$. Likewise, $\zcl_s(X)$ is the largest non-negative integer $\ell$ such that there are classes $z_j\in H^*(X^s)$, each with trivial restriction under the iterated diagonal inclusion $\Delta_s\colon X \hookrightarrow X^s$, and so that the cup-product $z_1\cdots z_\ell\in H^*(X^s)$ is non-zero. Each such class $z_i$ is called an $s$-th zero-divisor for $X$. The ``zero-divisor'' terminology comes from the observation that the map induced in cohomology by $\Delta_s$ restricts to the $s$-fold tensor power $H^*(X)^{\otimes s}$ to yield the $s$-iterated cup-product.

\subsection{Abrams model}
\begin{defi}
Let $G$ be a graph thought of as a 1-dimensional cell complex, and let $n$ be a positive integer. The labelled configuration space of $n$ particles on $G$ is the subspace $\mathcal{C}^nG= \prod^nG-\Delta$ of the $n$-cartesian product $\prod^nG$, where $\Delta = \{(x_1,\ldots,x_n) \in \prod^nG : x_i=x_j, i \neq j \}$ is the fat diagonal. The unlabelled configuration space of $G$ on $n$ points, $U\mathcal{C}^nG$, is the quotient space of $\mathcal{C}^nG$ by the action of the symmetric group $\mathcal{S}_n$, where the action is given by permutation of the factors.
\end{defi}

Configuration spaces are non-compact, so they cannot have a finite cell complex structure. We use instead Abrams' discretized homotopy model, which has a finite cubical complex structure. Here we briefly review the definition and main properties.

\begin{defi}
Take the product cell structure in $\prod^nG$; a cell in $\prod^nG$ is a cartesian product $a=a_1\times\cdots\times a_n$, where each $a_i$ is an (open) cell in $G$. We use the slightly simpler notation $a=(a_1,\ldots,a_n)$. The discretized configuration space of $n$ particles on $G$, $\mathcal{D}^nG$, is the subcomplex of $\prod^nG$ obtained by removing all cells whose closure intersect $\Delta$. Thus, a typical cell in $\mathcal{D}^nG$ has the form $a=(a_1,\ldots,a_n)$, with the closure of each $a_i$ being disjoint from the closure of any other $a_j$ ($i\neq j$). The symmetric group $\mathcal{S}_n$ acts on $\mathcal{D}^nG$ by permuting factors. The action permutes in fact cells, and the quotient complex, denoted by $U\mathcal{D}^nG$, is the unordered discretized configuration space of $n$ particles on $G$.
\end{defi}

Discretized configuration spaces can be used as homotopy models for usual configuration spaces.

\begin{teo}[\cite{aa,MR2833585}]\label{1}
For any $n >1$ and any graph $G$ with at least $n$ vertices, the labelled (unlabelled) configuration space of $n$ particles on $G$ strong deformation retracts onto $\mathcal{D}^nG$ $(U\mathcal{D}^nG)$ provided the following two conditions hold:
\begin{enumerate}
\item Each path between distinct vertices of degree not equal to 2 passes through at least $n-1$ edges.
\item Every cycle (i.e.~a loop whose only repeated vertices are the initial and final ones) passes through at least $n+1$ edges.
\end{enumerate}
\end{teo}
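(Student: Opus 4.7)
The plan is to construct an explicit strong deformation retraction $r_t\colon \mathcal{C}^nG\to\mathcal{C}^nG$ with $r_0=\mathrm{id}$, $r_1(\mathcal{C}^nG)\subseteq\mathcal{D}^nG$, and $r_t$ equal to the identity on $\mathcal{D}^nG$ for every $t$. Since the $\mathcal{S}_n$-action on $\prod^nG$ permutes product cells, any cell-by-cell construction can be arranged to be $\mathcal{S}_n$-equivariant and hence descends to the unordered quotient, so it suffices to handle the labelled case and obtain the unlabelled statement by passage to quotients.

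I would begin by classifying the \emph{bad} product cells $a=(a_1,\ldots,a_n)$ of $\prod^nG$, i.e., those whose closure meets the fat diagonal $\Delta$. Since each $a_i$ is either a vertex or an open edge of $G$, a cell is bad precisely when some pair $\overbar{a}_i,\overbar{a}_j$ shares a vertex. On the complement of the bad cells (which is exactly $\mathcal{D}^nG$) the deformation is the identity, and the work is to push each point of a bad cell that still lies in $\mathcal{C}^nG$ into a neighboring good cell by sliding the offending coordinates along their edges toward vertex endpoints. The role of conditions (1) and (2) is to guarantee enough combinatorial room to host $n$ particles in pairwise non-incident cells: a path with at least $n-1$ edges between essential vertices has at least $n$ vertices available as distinct targets, and a cycle of length at least $n+1$ admits an analogous spreading. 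Once compatible targets are chosen, the retraction on each product cell $C$ is a straight-line contraction in the cubical structure onto the target subcell of $\mathcal{D}^nG$.

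The main obstacle, and the heart of the argument, is global coherence: the local target-choice rule must match on the common face of any two adjacent product cells so that the pieces glue into a continuous homotopy, and the rule must commute with the $\mathcal{S}_n$-action. I would handle this inductively on the skeleta of $\prod^nG$, fixing an ordering of the cells of $G$ and selecting targets by a canonical combinatorial rule --- for instance, the lexicographically minimal configuration in $\mathcal{D}^nG$ reachable by pushing each coordinate to an endpoint of its cell. At each inductive stage the previously defined boundary data forces the extension, while conditions (1)--(2) keep the required targets available. Once continuity, the endpoint conditions, and equivariance have been verified, the unordered statement follows immediately by passage to the $\mathcal{S}_n$-quotient.
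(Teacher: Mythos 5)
The paper cites this theorem from \cite{aa} and \cite{MR2833585} and does not prove it, so there is no in-paper argument to compare your proposal against. Evaluated on its own terms, your sketch has the right broad shape --- construct an explicit, $\mathcal{S}_n$-equivariant strong deformation retraction of $\mathcal{C}^nG$ onto $\mathcal{D}^nG$ by sliding particles along edges, with the subdivision hypotheses supplying the room to separate them --- and you correctly identify global coherence across adjacent cells as the crux. But the sketch is not yet a proof, and one of its central mechanisms is actually inconsistent with the endpoint conditions you impose.

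You require $r_t$ to fix $\mathcal{D}^nG$ pointwise for all $t$, yet you describe the retraction on a bad cell $C$ as ``a straight-line contraction onto the target subcell'' chosen by a discrete rule (lexicographic minimality). These cannot both hold. Take $G$ the path on vertices $v_0,v_1,v_2$ with edges $e_1,e_2$, take $n=2$, and let $C=e_1\times e_2$. Then $\overline{C}\cap\mathcal{C}^2G$ is a closed square with the one corner $(v_1,v_1)$ deleted, while $\overline{C}\cap\mathcal{D}^2G$ is the union of the two closed edges of the square opposite that corner --- a subcomplex, not a single cell. Since $r_t$ must fix that subcomplex pointwise, $r_1$ cannot send all of $C$ to one chosen vertex; the retraction here is necessarily of radial type, with a target that varies continuously with the input. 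Consequently the gluing problem is not a matter of ``choosing matching target cells'' across shared faces, and the inductive target-selection scheme you outline cannot produce the required map. Two further difficulties compound this: a linear homotopy in the cube can cross the removed diagonal locus for some starting points, so what conditions (1) and (2) must really deliver is collision-free motion en route and not merely the existence of a discrete target (your sketch only invokes them for the latter); and ``lexicographically minimal'' in $\prod^nG$ is coordinate-sensitive, hence not a priori $\mathcal{S}_n$-equivariant, so the claimed reduction of the unlabelled case to the labelled one does not follow as stated. These are precisely the points that make Abrams' theorem nontrivial, and a complete argument has to confront them directly.
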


A graph satisfying the two conditions in Theorem~\ref{1} is said to be $n$-sufficiently subdivided. In this work we assume all graphs to be $n$-sufficiently subdivided, unless explicitly noted otherwise.

\subsection{Discrete Morse theory} 
\begin{defi}
A discrete vector field $W$ on a regular cell complex $X$ is a collection of pairs of cells of the form $(\tau, \nu)$, where $\tau $ is an inmediate face of $\nu$, such that each cell of $X$ appears as an entry of at most one pair of $W$. A cell $\sigma$ of $X$ is called:
\begin{itemize}
\item critical provided it does not appear as an entry of any pair of $W$;
\item redundant provided there is a cell $\sigma'$ such that $(\sigma,\sigma')\in W$;
\item collapsible provided there is a cell $\sigma'$ such that $(\sigma',\sigma)\in W$.
\end{itemize}
\end{defi}

Note that any cell must be of one and only one of the three types above. For a redundant cell $\tau$ of $X$, we shall denote by $W(\tau )$ the cell of $X$ with $(\tau, W(\tau))\in W$. 
 \begin{defi}
Let $W$ be a discrete vector field on $X$. A sequence of $k$-cells, $\tau_1, \tau_2,\ldots,\tau_n$ satisfying $\tau_i \neq \tau_{i+1}$ for $ i =1,2,\ldots,n-1$ is called a $W$-path of length $n$ if, for each $i=1,\ldots,n-1$, $\tau_i$ is redundant and $\tau_{i+1}$ is a face of $W(\tau_i)$. The $W$-path is closed if $\tau_1=\tau_n$. We say that $W$ is a gradient vector field provided it does not admit closed $W$-paths.
\end{defi}

The central idea behind discrete Morse theory is that a discrete gradient vector field $W$ on a regular complex $X$ provides instructions for simplifying the cellular structure of the simplicial complex $X$ without altering its homotopy type. The simplest situation to keep in mind is that of a cellular collapse: If $(\tau,\sigma)$ is a pair of cells in $X$ with $\tau$ a free facet of $\sigma$, we can collapse~$\sigma$ to $\partial\sigma-\tau$ by ``pushing'' it along $\tau$. This does not change the homotopy type of $X$, and produces a new cellular structure where, essentially, $\tau$ and $\sigma$ have been removed. A pair of cells as above would typically belong to a discrete gradient field on $X$. The collapsing process can be performed even if $\tau$ is not a free facet of $\sigma$ as long as we suitably elongate other cells (and their boundaries) having $\tau$ as a facet. More generally, we can iterate the collapsing process with any given \emph{non-closed} $W$-path $\tau_1,\tau_2,\ldots,\tau_n$ that starts at a cell $\tau_1$ that is either a free facet of $W(\tau_1)$, or that, except for $W(\tau_1)$, is a facet only of critical cells (the latter ones would then have to be elongated during the iterated collapsing process). The formal details are standard, see~\cite{rf}, and yield:

\begin{teo}\label{ftdmt}
Let $X$ be a finite cell-complex with a gradient vector field $W$. Then $X$ has the homotopy type of a cell complex having $m_p$ cells of dimension $p$, where $m_p$ denotes the number of critical cells in $X$ of dimension $p$.
\end{teo}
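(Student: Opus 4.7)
The plan is to induct on the number of non-critical cells of $X$. If every cell of $X$ is critical, then $W$ is empty, and the complex $X$ itself serves as the target cell model with $m_p$ cells in each dimension $p$; this is the base case.

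For the inductive step, the goal is to exhibit a homotopy-equivalent reduction that removes at least one redundant-collapsible pair without altering the homotopy type. The mechanism is precisely the elongated collapse described in the paragraph preceding the statement: given a non-closed $W$-path $\tau_1, \tau_2, \ldots, \tau_n$ whose initial cell $\tau_1$ is a facet only of $W(\tau_1)$ and possibly critical cells, one pushes $W(\tau_1)$ through $\tau_1$, then extends along the path while absorbing each $W(\tau_i)$ into adjacent critical cells via elongation. Each such step is a deformation retraction, after which the pairs $(\tau_i, W(\tau_i))$ no longer appear and the remaining cells still form a regular cell complex equipped with the restricted gradient vector field.

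The key point to verify is therefore the existence of such a ``ready-to-collapse'' $W$-path, and this is exactly where acyclicity enters. Because $W$ admits no closed $W$-paths and $X$ is finite, one can construct a discrete Morse function $f\colon X\to\mathbb{R}$ compatible with $W$: each $W$-pair $(\tau, W(\tau))$ is assigned a common value, and the strict increase of $f$ along face relations not coming from $W$ is guaranteed by the gradient condition. Ordering the redundant-collapsible pairs by increasing $f$-value produces, at each stage, a pair whose $\tau_1$ has no facet-incidence with any yet-uncollapsed collapsible cell other than $W(\tau_1)$; hence the inductive step applies. Equivalently, one can filter $X$ by sublevel complexes $X_c=\bigcup_{f(\sigma)\leq c}\sigma$ and analyze how the homotopy type changes as $c$ crosses critical versus non-critical values.

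The delicate part, and what I expect to be the main obstacle to write out in full, is checking that the elongation procedure produces a genuine regular cell structure on the retracted space and that it is compatible with the subsequent collapses along the $W$-path. This is the technical core of Forman's original argument (cf.~\cite{rf}); one must verify that elongation does not create self-adjacencies or inadmissibly merge boundary data, and that the field induced on the quotient remains gradient, so that the induction has input available at the next stage. Once this bookkeeping is handled, the induction on the number of non-critical cells terminates with a CW model realizing exactly the critical-cell count $\{m_p\}$.
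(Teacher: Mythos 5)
The paper does not prove this statement; the preceding sentence reads ``The formal details are standard, see~\cite{rf}, and yield:'', after which the exposition turns to the Farley--Sabalka reformulation (Theorem~\ref{deformacionfuerteespecial}) for all subsequent use. So your proposal has to be measured against Forman's argument in~\cite{rf}. Your overall architecture---extract a compatible discrete Morse function $f$ from the acyclic field $W$, then eliminate matched pairs while tracking critical cells---is the right one, and the cleanest incarnation is the sublevel-set route you mention at the very end: the spaces $X_c$ are honest subcomplexes of $X$ (since $f$ strictly increases along every face relation except matched ones), crossing a non-critical value is a composition of elementary collapses in which each face removed is genuinely \emph{free} (any other coface would have $f$-value beyond the current window, hence lie outside $X_c$), and crossing a critical value of index $p$ attaches a $p$-cell up to homotopy. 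Assembling these attachments produces the CW model with exactly $m_p$ cells in dimension~$p$.

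Two specific points, however, do not survive as written. First, your ordering is backwards. For the tearing-down induction your language suggests (``yet-uncollapsed''), the pair $(\tau_1,W(\tau_1))$ to process at each stage should be the one of \emph{largest} remaining $f$-value: any collapsible cell $\nu\neq W(\tau_1)$ having $\tau_1$ as a facet satisfies $f(\nu)>f(\tau_1)=f(W(\tau_1))$ and so has already been handled, which is precisely what makes $\tau_1$ available for collapse. With the \emph{smallest} $f$-value, as you wrote, such a $\nu$ is exactly one of the cells still present, defeating the claim. Second, you conflate the sublevel-set route with the cell-by-cell ``elongate-and-collapse'' route that the paper sketches informally before the theorem. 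The obstacle you flag at the end---verifying that elongation leaves a regular cell structure and a well-defined residual gradient field---is a genuine technical burden for the latter route, but it evaporates in the sublevel route, where every intermediate space is a subcomplex of the regular complex $X$, every collapse is by a free face, and the Morse model is constructed \emph{separately}, by gluing a fresh $p$-cell each time a critical value of index $p$ is crossed, rather than by quotienting $X$ itself. Commit to the sublevel route: it both fixes the ordering and dissolves the bookkeeping you rightly worry about.
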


For our purposes, it is slightly more convenient to think in terms of the formulation in~\cite[Propositions~2.2 and~2.3]{fs}. Start with a finite regular CW complex $X$ having a discrete gradient vector field $W$. Let $X'_n$ (respectively, $X''_n$) be the subcomplex of $X$ obtained by removing all redundant $n$-cells (and all critical $n$-cells) from the $n$-skeleton $X_n$ of $X$. 

\begin{teo}\label{deformacionfuerteespecial}
\begin{enumerate}
\item If $X$ has no critical cells of dimension greater than $k$, then $X$ strong deformation retracts to $X'_k$.
\item For any $n$, $X'_n$ is obtained from $X''_n$ by attaching as many $n$-cells as critical $n$-cells there are in $X$. Furthermore, if $n>0$, then $X''_{n}$ strong deformation retracts to $X'_{n-1}$.
\end{enumerate}
\end{teo}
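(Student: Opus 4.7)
The plan is to handle (2) first, since (1) will follow from the same underlying idea applied to a wider range of dimensions. Part (2)(a) should be a direct verification: by definition, $X'_n$ and $X''_n$ agree outside dimension $n$, and they differ inside dimension $n$ exactly in the critical $n$-cells (both complexes contain all collapsible $n$-cells and neither contains any redundant $n$-cell). Since the boundary of every $n$-cell of $X$ lies in $X_{n-1}\subseteq X''_n$, the critical $n$-cells of $X$ can be attached to $X''_n$ along their original attaching maps, producing exactly $X'_n$. The only thing to observe in passing is that both $X'_n$ and $X''_n$ are genuine subcomplexes of $X$ (so the attaching makes sense), which is immediate from the fact that boundaries of $n$-cells sit in $X_{n-1}$.

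For part (2)(b), the plan is to exhibit a gradient vector field on $X''_n$ whose critical cells are precisely the cells of $X'_{n-1}$, and then use discrete Morse collapsibility. Concretely, I would restrict $W$ to the sub-matching $W''$ consisting of those pairs $(\tau,\nu)$ of $W$ with $\dim\nu=n$. Every $n$-cell of $X''_n$ is collapsible (the critical and redundant ones have been removed), so each such cell $\nu$ is paired by $W$ with a unique redundant $(n-1)$-cell $\tau=W^{-1}(\nu)$, which of course lies in $X''_n$; conversely, a redundant $(n-1)$-cell $\tau$ of $W$ has $W(\tau)$ collapsible, so also in $X''_n$. Hence $W''$ matches the collapsible $n$-cells of $X''_n$ bijectively with the redundant $(n-1)$-cells of $W$, and is acyclic as a sub-matching of $W$. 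Its critical cells in $X''_n$ are $X_{n-1}$ minus the redundant $(n-1)$-cells, i.e.\ exactly $X'_{n-1}$. Applying Theorem~\ref{ftdmt} (in its stronger collapsing form, which yields a strong deformation retract whenever the critical cells form a subcomplex of the ambient space) to the pair $(X''_n,W'')$ gives the claimed deformation retraction $X''_n\searrow X'_{n-1}$.

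For part (1) I would run the same scheme one level up. Let $W'$ be the restriction of $W$ to pairs $(\tau,\nu)$ with $\dim\nu>k$. The hypothesis that $X$ has no critical cells of dimension above $k$ forces every cell of dimension $>k$ to appear in such a pair (redundant cells of dim $\geq k+1$ pair upward, collapsible cells of dim $\geq k+1$ pair downward), and it also forces every redundant $k$-cell to be in a pair of $W'$ (its match lies in dimension $k+1>k$). Consequently the critical cells of $W'$ in $X$ are exactly the cells of $X_k$ other than the redundant $k$-cells, i.e.\ $X'_k$. Again $W'$ is acyclic because $W$ is, so the same discrete Morse collapsing argument produces a strong deformation retract $X\searrow X'_k$.

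The main obstacle is the step, used twice, of upgrading Theorem~\ref{ftdmt} from a mere homotopy equivalence to a strong deformation retract onto the subcomplex of critical cells. This is where acyclicity of the restricted matching really enters: I would handle it by totally ordering the pairs of the restricted matching in a way that extends the partial order induced by $W$-paths, and then performing the elementary collapses one pair at a time along this order, elongating any critical cell that shares the redundant face being collapsed (as sketched in the paragraph preceding Theorem~\ref{ftdmt}). Acyclicity guarantees that at each stage the next pair to be collapsed is available, so the process terminates with the desired subcomplex without ever disturbing cells of $X'_k$ (resp.\ $X'_{n-1}$).
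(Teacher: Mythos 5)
The paper does not actually prove this theorem; it quotes it as \cite[Propositions~2.2 and~2.3]{fs}, so there is no in-paper argument to compare your proposal against. That said, your from-scratch argument is essentially correct and is the standard one: part 2(a) is a direct unwinding of the definitions of $X'_n$ and $X''_n$; for 2(b) and for (1) you restrict $W$ to the sub-matchings $W''$ (pairs whose top cell has dimension $n$) and $W'$ (pairs whose top cell has dimension $>k$), correctly identify their critical cells with $X'_{n-1}$ and $X'_k$ respectively, and inherit acyclicity from $W$.

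One point needs sharpening, though, because as written it quietly contradicts what you are trying to prove. Theorem~\ref{ftdmt} as stated in the paper only gives a homotopy equivalence to an abstract CW complex; the mechanism mentioned in the preceding paragraph for non-free facets (``elongating'' cells) genuinely changes the cell structure, and so it cannot produce a strong deformation retraction onto a \emph{subcomplex of the original} $X$. The reason your argument nonetheless works is that $X'_k$ (resp.\ $X'_{n-1}$) is a subcomplex of the ambient complex containing \emph{all} critical cells of the restricted matching. This forces every collapse step to be an honest elementary collapse: if you process the pairs $(\tau,W(\tau))$ in decreasing dimension of $W(\tau)$, and within a fixed dimension in an order extending the $W$-path order, then at the moment you reach $(\tau,W(\tau))$ the only surviving cofaces of $\tau$ could be critical cells --- but a critical coface of $\tau$ would force $\tau$ itself to lie in the critical subcomplex (being a face of a cell in a subcomplex), which is impossible since $\tau$ is redundant. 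Hence $\tau$ is a free face and no elongation is ever required; the sequence of classical collapses terminates exactly at $X'_k$ (resp.\ $X'_{n-1}$). You should state this as the lemma you are using, rather than invoking ``a stronger collapsing form of Theorem~\ref{ftdmt}'' with elongations allowed, since elongating even once would take you off the target subcomplex.
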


The following fact, taken from~\cite{sc}, will be made explicit in the next section in the case of Abrams' discrete homotopy model for configuration spaces on graphs.
\begin{prop}\label{liftchettih}
Let $X$ be a finite cubical complex with finite covering map $\pi : Y \rightarrow X$. Take the lifted cubical complex structure on $Y$, and let $V$ be a discrete gradient vector field on $X$. Then there exists a discrete gradient vector field $W$ on $Y$ which is the lift of $V$.
\end{prop}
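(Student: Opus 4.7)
The plan is to lift $V$ cell-by-cell, using the fact that $\pi$ is a covering map of cubical complexes and $Y$ carries the lifted cell structure. Concretely, for every cell $\sigma$ of $X$ the preimage $\pi^{-1}(\sigma)$ is a disjoint union of cells, each mapped homeomorphically onto $\sigma$. Working in an evenly covered neighbourhood of a point of $\tau$, one verifies that for every immediate face relation $\tau\prec\nu$ in $X$: (a) each lift $\tilde\nu$ of $\nu$ has exactly one immediate face lying over $\tau$; and (b) each lift $\tilde\tau$ of $\tau$ is an immediate face of exactly one cell of $Y$ lying over $\nu$. This is the only piece of covering-space technology I would need.

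With this in hand I would set
$$W:=\left\{(\tilde\tau,\tilde\nu)\,:\,(\pi(\tilde\tau),\pi(\tilde\nu))\in V\text{ and }\tilde\tau\text{ is an immediate face of }\tilde\nu\right\}$$
and first verify that $W$ is a discrete vector field. For a cell $\tilde\sigma$ of $Y$ with $\sigma:=\pi(\tilde\sigma)$, I would split into three cases: if $\sigma$ is critical in $V$, the construction attaches no pair of $W$ to $\tilde\sigma$; if $\sigma$ is redundant with $V$-partner $\sigma'$, then (b) furnishes a unique $\tilde{\sigma'}$ making $(\tilde\sigma,\tilde{\sigma'})\in W$, and no pair of $W$ can have $\tilde\sigma$ as a second entry because $\sigma$ is not a second entry in $V$; if $\sigma$ is collapsible with $V$-partner $\mu$, then (a) furnishes the unique $\tilde\mu$ making $(\tilde\mu,\tilde\sigma)\in W$ and, symmetrically, $\tilde\sigma$ does not appear as a first entry.

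The second step is to check that $W$ admits no closed $W$-paths. Given a $W$-path $\tilde\tau_1,\ldots,\tilde\tau_n$ in $Y$, set $\tau_i:=\pi(\tilde\tau_i)$. Each $\tau_i$ is automatically redundant in $V$ and $\tau_{i+1}$ is a face of $V(\tau_i)$, since both properties transfer under $\pi$. To conclude that $\tau_1,\ldots,\tau_n$ is an actual $V$-path I need $\tau_i\neq\tau_{i+1}$: if equality held, then $\tilde\tau_i$ and $\tilde\tau_{i+1}$ would be two distinct immediate faces of the common cell $W(\tilde\tau_i)$ projecting to the same cell $\tau_i=\tau_{i+1}$, contradicting~(a). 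Hence any closed $W$-path projects to a closed $V$-path, which is excluded because $V$ is gradient. Since $W$ visibly lifts $V$, this completes the argument.

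The main obstacle, i.e.\ the only step where the lifting structure is genuinely needed beyond indexing, is the distinctness claim $\tau_i\neq\tau_{i+1}$ used to transport closed $W$-paths down to closed $V$-paths; everything else is bookkeeping made possible by the uniqueness statements in (a) and (b).
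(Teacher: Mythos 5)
The paper does not actually prove Proposition~\ref{liftchettih}; it is quoted from Chettih's Ph.D.\ thesis (\cite[Proposition~2.2.3]{sc}) with no proof given, and the subsequent sections of the paper merely \emph{instantiate} the lifted gradient field in the Abrams setting. So there is no in-paper argument to compare against.

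Your proof is correct and self-contained, and it is also the natural argument. The two covering-space observations~(a) and~(b) are exactly the right technical input: they hold because the characteristic maps in a cubical complex are embeddings and the cells of $Y$ carry the lifted structure, so each closed cell of $Y$ maps homeomorphically onto the closed cell of $X$ below it; this gives~(a) directly, and the standard unique-lifting argument for the (simply connected) cube then gives~(b). The case-by-case verification that $W$ is a discrete vector field is complete: in the redundant case you should note, as you implicitly do, that the fact that $\sigma$ is the first coordinate of only \emph{one} pair of $V$ is also being used before invoking~(b), and symmetrically in the collapsible case --- but this is exactly the hypothesis that $V$ is a discrete vector field. The final step is the one place where something could go wrong, and you identify it precisely: a $W$-path in $Y$ projects to a sequence of redundant cells with the correct face relations, but one must still check that consecutive terms stay distinct downstairs. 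Your contradiction via~(a) (two distinct immediate faces of $W(\tilde\tau_i)$ lying over the same cell) is exactly what closes that gap, after which the gradient property of $V$ kills any would-be closed $W$-path.
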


\section{Farley-Sabalka ordered gradient field}\label{SeccionFSGF}
Proposition~\ref{liftchettih} asserts that Farley-Sabalka discrete gradient vector field on $U\mathcal{D}^nG$ lifts to one on $\mathcal{D}^nG$. In this section we make explicit the construction, paralleling Farley-Sabalka's steps in the unordered case.

Let $T$ be a fixed maximal tree of $G$ with a fixed root (i.e.~a vertex of degree one) which we denote by $0$. Edges outside $T$ are called deleted edges. Define an operation on vertices $\wedge : V(G) \times V(G) \rightarrow V(G)$ by setting $v \wedge u =w$ if the intersection of the unique directed $0v$-path in $T$ with the unique directed $0u$-path in $T$ is the unique directed $0w$-path in $T$. Note that $u\wedge v=v\wedge u$ and $u\wedge u=u$ for all vertices $u$ and $v$.

Given a vertex $v$ different from $0$ and of degree $d(v)$, fix a total ordering of the edges in $G$ adjacent to $v$ by labeling each edge with a number between 0 and $d(v)-1$, assigning the number 0 to  the edge that lies on the unique $0v$-path in $T$. The single edge adjacent to $0$ is given the label~$1$. Define a function $g: V(G) \times V(G) \rightarrow \mathbb{Z}$ by setting $g(v,w)$ to be the label of the edge adjacent to $v$ that lies on the unique $vw$-path in $T$  if $v \neq w$, and $g(w,w)=0$.

Totally order the vertices of $G$ as follows. Let $u$ and $v$ be two vertices of $G$ with $u\neq v$, and set $w=v \wedge u$. Then $u<v$ if and only if $u=w$ or $g(w,u)<g(w,v)$ when $u \neq w$. Notice that if a vertex $w$ lies on the unique $0v$-path in $T$, then $w \leq v$. For practical purposes, it is convenient to picture such an order of the vertices of $G$ by numbering them in the order in which they are first encountered through the following walk along the tree $T$: Choose an embedding of $T$ in the plane, and start the walk at the root vertex 0. Walking is to be done in such a way that, at any given intersection, we take the (say) left-most branch, turning around when reaching a vertex of degree 1. In these terms, we will denote a given vertex by its associated ordinal number, recalling that the root vertex has already been denoted by 0. Edges of $G$ will then be written as \emph{ordered} pairs of non-negative integers (i.e.~vertices) $(u,v)$ with $u<v$. Note that the vertex ordering induces a corresponding total ordering on edges of $T$. Indeed, given a vertex $v$ different from $0$, we shall denote by $e(v)$ the edge in $T$ which is adjacent to $v$ and has label 0. This sets a one-to-one correspondence between edges in $T$ and vertices with a positive label, thus transferring the ordering of vertices to edges of $T$.

Let $a=(a_1,\ldots,a_n)$ be a cell of $\mathcal{D}^nG$, and assume that $a_i$ is a vertex. If the closure of $e(a_i)$ intersects the closure of some cell $a_j$ ($j\neq i$), then we say that $a_j$ blocks $a_i$ in $a$, and that $a_i$ is blocked in $a$ (note that in such a case the index $j$ is completely determined by the index $i$). On the other hand, if $a_i$ is unblocked in $a$, we define the cell $\er_i(a):=(a_1,\ldots,a_{i-1},e(a_i),a_{i+1,}\ldots,a_n)$ of $\mathcal{D}^nG$ to be the elementary reduction of $a$ from $a_i$. If $a_i$ is the smallest unblocked vertex of $a$ (in the sense of the order on vertices), then we use the name ``principal reduction'' instead of ``elementary reduction'', and the notation $\pr(a)$ instead of $\er_i(a)$.

We next define inductively a collection $W$ of pairs of cells of $\mathcal{D}^nG$, which we shall later prove to form a discrete gradient field. Set $$W_0=\{(a,\pr(a))\,\vert\,a \mbox{ is a vertex of }\mathcal{D}^nG\mbox{ and $\pr(a)$ is defined}\}.$$ Assuming $W_k$ has been defined, let $W_{k+1}$ be the set of pairs $(a,\pr(a))$ such that $a$ is a $(k+1)$-cell of $\mathcal{D}^nG$ with $\pr(a)$ defined, and such that $a$ does not appear as the second entry of some pair in $W_k$. We then set $W=\bigcup_{k\geq0} W_k$.

 Let $(a,b)$ be an edge of $T$ (so that $a$ and $b$ are vertices with $a<b$ and $(a,b)=e(b)$). Let $c$ be a cell of $\mathcal{D}^nG$ containing the edge $(a,b)$. We say that $(a,b)$ is an order respecting edge in $c$ if for every vertex $c_i$ of $c$ adjacent to $a$, we have either $c_i < a$ or $c_i>b$. Equivalently, $(a,b)$ is an order respecting edge in $c$ if $b$ is smaller than any other vertex in $c$ that is blocked by $(a,b)$.

\begin{lema}[Order-respecting edges lemma]\label{order}
\begin{enumerate}
\item Let $a=(a_1,a_2,\ldots,a_n)$ be a cell of $\mathcal{D}^nG$ with $a_i$ the smallest unblocked vertex of $a$. Then $e(a_i)$ is an order respecting edge in $\pr(a)$.
\item Let $a=(a_1,a_2,\ldots,a_n)$ be a cell of $\mathcal{D}^nG$ with $a_i = e(b_i)$ for a vertex $b_i$ of $G$. Consider the cell $b=(a_1,\ldots,a_{i-1},b_i,a_{i+1},\ldots,a_n)$ of $\mathcal{D}^nG$ ---so $a=\er_i(b)$. Assume that a cell $a_j$ with $j\neq i$ is an edge of $T$. Then $a_j$ is order respecting in $a$ if and only if it is order respecting in $b$.
\end{enumerate}
\end{lema}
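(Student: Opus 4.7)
I plan to work throughout with the equivalent formulation of the order-respecting property: the edge $e(q)=(p,q)$ in a cell $c$ is order-respecting precisely when every vertex entry of $c$ that is blocked by $(p,q)$ is larger than $q$. Unpacking, ``blocked by $(p,q)$'' for a vertex entry $c_k$ means $\overline{e(c_k)}\cap\overline{(p,q)}\neq\emptyset$, which (after ruling out $c_k\in\{p,q\}$, which is forbidden by cell validity) translates into the $T$-parent of $c_k$ lying in $\{p,q\}$. Since a $T$-child of $q$ automatically exceeds $q$, the only case requiring verification in either part of the lemma is that of a $T$-child of $p$ distinct from $q$.

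For~(1), let $v$ be the $T$-parent of $a_i$, and let $a_k$, with $k\neq i$, be a vertex entry of $\pr(a)$ that is a $T$-child of $v$ distinct from $a_i$. The plan is to show that $a_k$ is unblocked in $a$, for then the minimality of $a_i$ as the smallest unblocked vertex of $a$ forces $a_k\geq a_i$, and hence $a_k>a_i$ as required. Any entry of $a$ blocking $a_k$ would have its closure meet $\overline{e(a_k)}$, and therefore would contain $v$ or $a_k$ in its closure. But entries of $a$ with $a_k$ in their closure are forbidden by cell validity of $a$ (since $a_k$ is already itself an entry), and entries of $a$ with $v$ in their closure---whether the vertex $v$ itself or an edge incident to $v$---would necessarily meet $\overline{e(a_i)}$ at $v$ and so block $a_i$, contradicting that $a_i$ is unblocked in $a$. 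Hence nothing in $a$ blocks $a_k$.

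For~(2), write $a_j=(p,w)=e(w)$. The vertex entries of $b$ are exactly those of $a$ together with the extra vertex $b_i$ sitting in the $i$-th slot, so the order-respecting conditions for $a_j$ in $a$ and in $b$ can differ only through the constraint potentially imposed by $b_i$. It therefore suffices to check that $b_i$ is not blocked by $a_j$ in $b$, i.e.\ that the $T$-parent of $b_i$ lies outside $\{p,w\}$. But the $T$-parent of $b_i$ is an endpoint of the edge $e(b_i)=a_i$, and the requirement that $a$ be a valid cell of $\mathcal{D}^nG$ forces $\overline{a_i}\cap\overline{a_j}=\emptyset$; in particular no endpoint of $e(b_i)$ lies in $\{p,w\}$, so no blocking of $b_i$ by $a_j$ can occur.

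The main obstacle I expect is not conceptual but notational: translating ``adjacent to $p$'' from the definition into ``$T$-parent equal to $p$ or $q$'' via the equivalent formulation, and carefully distinguishing the two sources of closure-disjointness in play, namely cell-validity (of $a$ and of $\pr(a)$) on the one hand and unblockedness of $a_i$ on the other. Once this bookkeeping is in place, both parts reduce to routine endpoint comparisons in $T$.
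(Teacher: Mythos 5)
Your proof is correct and follows essentially the same route as the paper's: part~(1) rests on the observation that any would-be offending $T$-child of $v$ is already unblocked in $a$ (hence exceeds $a_i$ by minimality), and part~(2) reduces to the fact that cell validity of $a$ forbids $b_i$ from being blocked by $a_j$. You supply a bit more detail than the paper---in particular, the paper merely asserts the unblockedness step in part~(1), whereas you verify it carefully via the closure argument---but the underlying reasoning coincides.
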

\begin{proof}
Regarding the first assertion, assume that the edge $e(a_i)=(v_i,a_i)$, $v_i <a_i$, is not an order respecting edge in $\pr(a)$. Then there exists a vertex $a_j$ in $\pr(a)$ adjacent to $v_i$ such that $v_i < a_j < a_i$. But this means that $a_i$ is not the minimal unblocked vertex of $a$, for $a_j$ is also unblocked in $a$ and $a_j <a_i$.

For the second assertion, if $a_j$ is order respecting in $b$, then it is necessarily order respecting in $a$, for any vertex in $a$ is also a vertex in $b$. Conversely, let $a_j=(x,y)$, $x<y$, $j\neq i$, be an order respecting edge in $a$, and assume it is not an order respecting edge in $b$. Then the vertex $b_i$ must be adjacent to the vertex $x$ with $x < b_i <y$. This forces $(x,b_i)=e(b_i)=a_i$, which contradicts that $a$ is a cell in $\mathcal{D}^nG$, for the closures of $a_i$ and $a_j$ overlap.
\end{proof}

In what follows we consider minimal order respecting edges in cells $a$ of $\mathcal{D}^nG$, where minimality is taken within order respecting edges in $a$ with respect to the order we have set on edges of $T$.

\begin{teo}[Classification Theorem]\label{teoremadeclasificacion}
A cell $a=(a_1,\ldots,a_n)$ in $\mathcal{D}^nG$ is:
\begin{enumerate}
\item critical if and only if it contains no order respecting edges nor unblocked vertices.
\item collapsible if and only if it contains a minimal order respecting edge $a_i=e(v)$, but no unblocked vertices strictly smaller than $v$.
\item redundant if and only if either
\begin{enumerate}
\item it does not contain order respecting edges, {but it does contain unblocked vertices,} or
\item it contains a minimal order respecting edge $a_i=e(v)$ as well as an unblocked vertex~$a_j$ strictly smaller than $v$.
\end{enumerate}
\end{enumerate}
\end{teo}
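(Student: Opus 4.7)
The plan is to induct on the cell dimension $k$, exploiting the fact that, by the inductive definition of $W$, a $k$-cell $a$ is collapsible precisely when $a=\pr(b)$ for some redundant $(k-1)$-cell $b$. The base case $k=0$ is immediate: $0$-cells carry no edges, no $0$-cell can be collapsible, and $a$ is redundant (resp.~critical) iff $\pr(a)$ is defined (resp.~undefined), i.e.~iff $a$ has (resp.~does not have) an unblocked vertex.

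For the inductive step I will prove part~(2) in both directions, and then deduce (1) and (3) by exclusion. For direction ($\Leftarrow$) of (2), suppose $a$ has minimal order-respecting edge $a_i=e(v)$ and no unblocked vertex strictly smaller than $v$. Set $b=(a_1,\ldots,a_{i-1},v,a_{i+1},\ldots,a_n)$; since $\cl(v)\subseteq\cl(e(v))$, the pairwise closure-disjointness in $a$ descends to $b$, so $b\in\mathcal{D}^nG$ and $v$ is unblocked there. To verify that $v$ is the \emph{smallest} unblocked vertex of $b$, suppose some $b_j=a_j<v$ is unblocked in $b$; the only way $a_j$ could fail to be unblocked also in $a$ is if $\cl(e(a_j))$ meets $\cl(e(v))$ at the additional endpoint $v'$ of $e(v)$. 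A brief case check rules this out: $a_j=v'$ is incompatible with $a$ being a cell, and $a_j'=v'$ (the sibling case) makes $a_j$ adjacent to $v'$, whence the order-respecting condition on $a_i=e(v)$ forces $a_j<v'$ or $a_j>v$, both incompatible with $v'<a_j<v$. Hence $a_j$ is unblocked in $a$, contradicting the hypothesis; so $\pr(b)=a$. It remains to show $b$ is redundant: by Lemma~\ref{order}(2), the order-respecting edges of $b$ biject with those of $a$ at positions $\neq i$, all of which have labels $>v$ by minimality of $a_i$ in $a$. Combined with the unblocked vertex $v$ of $b$, the inductive hypothesis places $b$ in case (3a) or (3b), so $(b,a)\in W$ and $a$ is collapsible.

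For direction ($\Rightarrow$) of (2), if $a$ is collapsible then $(b,a)\in W$ for some $b$ with smallest unblocked vertex $b_i=v$ and $a_i=e(v)$. Lemma~\ref{order}(1) says $a_i$ is order-respecting in $a$. Any unblocked vertex $a_j<v$ of $a$ would remain unblocked in $b$ (since $\cl(v)\subseteq\cl(e(v))$), contradicting minimality of $v$ in $b$. Any order-respecting edge $a_k=e(w)$ in $a$ with $w<v$ would, by Lemma~\ref{order}(2), be order-respecting in $b$; applying the inductive hypothesis to the redundant $b$ puts it in case (3b), yielding a minimal order-respecting edge $e(u)$ of $b$ with $u\leq w<v$ and an unblocked vertex of $b$ strictly below $u$, hence strictly below $v$---contradicting that $v$ is the smallest unblocked vertex of $b$. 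Hence $a_i=e(v)$ is the minimal order-respecting edge of $a$.

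Parts (1) and (3) then follow by exclusion: since redundant, collapsible and critical partition the cells, $a$ is redundant iff $\pr(a)$ is defined and $a$ is not collapsible, which by the above gives precisely cases (3a) and (3b); similarly $a$ is critical iff $\pr(a)$ is undefined and $a$ is not collapsible, forcing both no unblocked vertex and no order-respecting edge, which is (1). The main technical obstacle is the case analysis in direction ($\Leftarrow$) of (2) verifying that $v$ remains the smallest unblocked vertex of $b$; this is where the order-respecting hypothesis on $a_i=e(v)$ is indispensable, by ruling out the sibling-adjacency scenario from producing newly unblocked vertices smaller than $v$.
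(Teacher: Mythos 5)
Your proof is correct, and it takes a structurally different route from the paper's. The paper establishes \emph{sufficiency} of each of the four conditions directly (in particular, in case~2 it shows that $b$ is not collapsible via a contradiction argument that introduces a further cell $c$ two dimensions down, and in case~3(b) it shows that $a$ is not collapsible by a similar argument), and then obtains \emph{necessity} from the observation that the four conditions are mutually exclusive and exhaustive together with the construction-level fact that no cell is simultaneously redundant and collapsible. This ordering of the logic is carefully chosen because the paper wants to use the sufficiency part before Theorem~\ref{teo1} (that $W$ is a vector field) is available; see the remark after Proposition~\ref{Wisinjective}. You instead make the induction on cell dimension explicit, keying it to the inductive definition $W=\bigcup W_k$, and prove only part~(2) in both directions, with (1) and (3) then following by the same exclusion principle. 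Both proofs are driven by Lemma~\ref{order}; indeed your verification that $\pr(b)=a$ in the $(\Leftarrow)$ direction is the same sibling-adjacency case check the paper performs to show $u=v$ in its case~2. Where they genuinely differ is that your inductive hypothesis absorbs the two ``$b$ (resp.\ $a$) is not collapsible'' contradiction arguments the paper runs by hand: in the $(\Leftarrow)$ direction you simply place $b$ in case~(3a) or (3b) and invoke the IH, and in the $(\Rightarrow)$ direction you derive the contradiction from the IH applied to the redundant cell $b$. This buys a cleaner top-level structure and avoids the descent to a third cell $c$; the paper's version, by contrast, is self-contained at each dimension, which is what allows the sufficiency half to be quoted in the proof of Proposition~\ref{Wisinjective} before the whole theorem is settled. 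One small point worth making explicit in your write-up: the trichotomy ``redundant/collapsible/critical partition the cells'' that your exclusion step uses holds \emph{by construction} of $W$ (the pair $(a,\pr(a))$ is excluded from $W_{k+1}$ whenever $a$ is already a second entry in $W_k$), not because $W$ is known to be a vector field, so your argument incurs no circularity with Theorem~\ref{teo1}.
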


Item 3 in Theorem~\ref{teoremadeclasificacion} can be stated on the lines of item 2. Namely, $a$ is redundant if and only if it contains a minimal unbloqued vertex $a_j$, but no order respecting edges strictly smaller than $a_j$. We have chosen the statement using subitems \emph{(a)} and \emph{(b)} for proof purposes.

\begin{proof}[Proof of Theorem~\ref{teoremadeclasificacion}]
We start by proving sufficiency of all four conditions.

\smallskip
1. Assume that $a$ contains no order respecting edges nor unblocked vertices. Then $a$ cannot be collapsible (by Lemma~\ref{order}.1) nor redundant (by definition), so it is critical. (Note that the last conclusion holds even though we have not yet proved that $W$ is a discrete vector field.)

\smallskip
2. Assume now that $a$ contains a minimal order respecting edge $a_i=e(v)$, but no unblocked vertices smaller than $v$. Set $b=(a_1,\ldots,a_{i-1},v,a_{i+1},\ldots,a_n)$. We prove that $(b,a)\in W$ (so $a$ is collapsible, as asserted). Let $u$ be the smallest unblocked vertex of $b$. Then $u \leq v$ since $v$ is also an unblocked vertex of $b$. If $u<v$, then $u$ must be blocked in $a$, by the second hypothesis. But $u$ is unblocked in $b$, so that $u$ must be blocked precisely by $a_i$ in $a$, contradicting the first hypothesis. Hence $u=v$, so that $a=\pr(b)$. The only thing left to prove is that $b$ is not collapsible. Suppose, for a contradiction, that $b$ is collapsible. By Lemma~\ref{order}.1 $b$ has an order respecting edge $a_j=e(w)$, with $j\neq i$, such that \begin{equation}\label{dosigualdades}b=\er_j(c)=\pr(c),\end{equation} where $c$ is obtained from~$b$ by substituting the edge $a_j$ by the vertex $w$. Note that $v$ is necessarily unblocked in $c$, so that the second equality in~(\ref{dosigualdades}) (together with the fact that $j\neq i$) yields \begin{equation}\label{contrad1}w<v.\end{equation} On the other hand, recall that $a_j$ is order respecting in $b$, so Lemma~\ref{order}.2 implies that $a_j$ is order respecting in $a$ too. The assumed minimality of $a_i$ then yields $v\leq w$, which contradicts~(\ref{contrad1}).

\smallskip
3(a). Assume next that $a$ does not contain order respecting edges, but it does contain unblocked vertices. Then $a$ cannot be collapsible (by Lemma~\ref{order}.1), and is in fact redundant (by definition).

\smallskip
3(b). Lastly, assume that $a$ contains a minimal order respecting edge $a_i=e(v)$, as well as an unblocked vertex $a_j$ with
\begin{equation}\label{e2}
a_j < v.
\end{equation}
We can safely assume that $a_j$ is the minimal such vertex in~$a$. Once we prove that $a$ is not collapsible, it will be clear that $a$ is redundant, for $\er_j(a)=\pr(a)$ is defined. Assume, for a contradiction, that $a$ is collapsible. By Lemma~\ref{order}.1, there exists an order respecting edge $a_k=e(w)$ of $a$ giving
\begin{equation}\label{e4}
a=\er_k(b)=\pr(b),
\end{equation}
where $b$ is obtained from $a$ by substituting the edge $a_k$ by the vertex $w$. The assumed minimality of $a_i$ gives
\begin{equation}\label{e3}
v\leq w.
\end{equation}
On the other hand, the vertex $a_j$ has been assumed to be unblocked in $a$, so it clearly belongs to and is unblocked in $b$. It then follows from~(\ref{e4}) that $w\leq a_j$, which contradicts~(\ref{e2}) and~(\ref{e3}).

Since the four conditions in this theorem are mutually disjoint and cover all possible situations, the proof is complete in view of Theorem~\ref{teo1} below.
\end{proof}

\begin{teo}\label{teo1}
$W$ is a discrete vector field in $\mathcal{D}^nG$.
\end{teo}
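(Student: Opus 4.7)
The plan is to verify that each cell of $\mathcal{D}^n G$ appears as an entry of at most one pair of $W$. This decomposes into three conditions: (a) each cell appears as a first entry at most once; (b) no cell is simultaneously a first and a second entry; (c) each cell appears as a second entry at most once. Conditions (a) and (b) are immediate: (a) because $\pr(a)$ is uniquely determined whenever it exists, and (b) because the inductive clause defining $W_{\dim a}$ excludes $(a,\pr(a))$ precisely when $a$ already appears as a second entry in $W_{\dim a-1}$.

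The substance of the theorem lies in (c). My plan is to prove the following intermediate fact, which I will call the \emph{Minimal OR Edge Claim}: for every $(c,d)\in W$, writing $c_p$ for the smallest unblocked vertex of $c$, the created edge $e(c_p)=d_p$ is the minimal order-respecting edge of $d$. Granting this, (c) follows at once: if $(a,b),(a',b)\in W$ with $a_i$ (resp.\ $a'_j$) the smallest unblocked vertex of $a$ (resp.\ $a'$), then $b_i=e(a_i)$ and $b_j=e(a'_j)$ are both the unique minimal OR edge of $b$, whence $i=j$, $a_i=a'_j$, and so $a=a'$.

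To prove the Minimal OR Edge Claim I argue by contradiction. Suppose $d$ has an OR edge $e(u)=d_\ell$ with $u<c_p$, and choose $u$ as small as possible. Then $\ell\neq p$, since $d_p=e(c_p)\neq e(u)$. Applying Lemma~\ref{order}.2 to the edge-vertex swap at position $p$ between $d$ and $c$, OR-ness transfers between $c$ and $d$ at all positions other than $p$; in particular $c_\ell=e(u)$ is OR in $c$. Since the OR edges of $c$ are exactly the OR edges of $d$ at positions $\neq p$ (position $p$ of $c$ holds a vertex, not an edge), and $u$ is by construction the smallest far-endpoint among them, the edge $e(u)$ is the \emph{minimal} OR edge of $c$. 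Combined with the fact that $c_p>u$ is the smallest unblocked vertex of $c$ (so no unblocked vertex of $c$ lies strictly below $u$), this shows that $c$ satisfies hypothesis~2 of the Classification Theorem~\ref{teoremadeclasificacion}.

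At this point I invoke the sufficiency of hypothesis~2, which is established in the preceding proof of Theorem~\ref{teoremadeclasificacion} using only Lemmas~\ref{order}.1 and~\ref{order}.2 and not Theorem~\ref{teo1}: that sufficiency produces a pair $(c^\flat,c)\in W$ with $\dim c^\flat=\dim c-1$, so $c$ appears as a second entry of a lower-dimensional pair in $W$. This contradicts $(c,d)\in W$, since the inductive construction of $W$ excludes $c$ from being a first entry of any pair once $c$ has become a second entry at a lower dimension. The main obstacle to carrying out this plan is navigating the apparent circularity between Theorems~\ref{teoremadeclasificacion} and~\ref{teo1}; the resolution is that the sufficiency portions of all four cases of Theorem~\ref{teoremadeclasificacion} were proved independently of Theorem~\ref{teo1}, whereas Theorem~\ref{teo1} is needed only to upgrade those four sufficiency statements into a genuine trichotomy by guaranteeing that each cell belongs to exactly one of the classes critical, collapsible, or redundant.
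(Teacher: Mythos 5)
Your proposal is correct and structurally identical to the paper's: both reduce the theorem to proving that no cell appears as the second entry of two distinct pairs of $W$, both isolate the same intermediate result (your ``Minimal OR Edge Claim'' is exactly the paper's Proposition~\ref{Wisinjective}), and both prove it by contradiction using only the already-established sufficiency halves of Theorem~\ref{teoremadeclasificacion}. Your proof of the key claim is, however, somewhat more direct than the paper's. You observe that because $c_p>u$ is the \emph{smallest} unblocked vertex of the redundant cell $c$, there is automatically no unblocked vertex of $c$ below $u$; combined with $e(u)$ being the minimal order-respecting edge of $c$, this places $c$ in case~2 of Theorem~\ref{teoremadeclasificacion} and yields the contradiction (a cell that is simultaneously redundant and collapsible) by invoking only the sufficiency of case~2. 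The paper does not exploit the minimality of the redundant cell's unblocked vertex: it instead extracts an unblocked vertex $a_k<u$ via the contrapositive of case~2, shows $a_k$ is blocked in the collapsible cell via the contrapositive of case~3(b), and finally contradicts order-respectingness of $a_i$. Your route avoids case~3(b) and the auxiliary vertex $a_k$ altogether; both arguments are valid, but yours is shorter.
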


From the way $W$ has been defined it is clear that (i) no cell in $\mathcal{D}^nG$ is simultaneously redundant and collapsible, and that (ii) no cell $a$ in $\mathcal{D}^nG$ belongs to two different pairs $(a,b)$ of $W$. Consequently, the proof of Theorem~\ref{teo1} amounts to proving that $W$ is an injective function, a fact that we will prove as a consequence of the following strengthening of Lemma~\ref{order}.1:

\begin{prop}\label{Wisinjective}
Let $a=(a_1,\ldots,a_n)$ be a collapsible cell in $\mathcal{D}^nG$, say $a=\er_i(b)=W(b)$. Then $a_i$ is the minimal order respecting edge of $a$.
\end{prop}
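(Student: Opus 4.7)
The plan is to proceed by strong induction on $\dim a$. The base case $\dim a = 1$ is immediate: $a$ has a unique edge (necessarily $a_i$), and it is order respecting by Lemma~\ref{order}.1.

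For the inductive step, Lemma~\ref{order}.1 shows $a_i$ is order respecting in $a$, so the task is to rule out a strictly smaller order respecting edge. I argue by contradiction: if such edges exist, let $a_{k^*}=e(w^*)$ (with $w^*<b_i$) be the minimal one in the induced edge order of $T$; Lemma~\ref{order}.2 carries the property back to $b$. Let $c^*$ be the cell obtained from $b$ by replacing the edge $a_{k^*}$ at position $k^*$ with the vertex $w^*$. A standard closure-containment check shows that both $w^*$ (at position $k^*$) and $b_i$ (at position $i$) are unblocked in $c^*$. Let $u$ be the smallest unblocked vertex of $c^*$; the analysis then splits on whether $u<w^*$ or $u=w^*$.

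If $u<w^*$, then $u<b_i$ forces $u$ to be blocked in $b$ (by minimality of $b_i$) but unblocked in $c^*$, so the blocker in $b$ must be $a_{k^*}$. Writing $p(v)$ for the parent of $v$ in $T$ (so $e(v)=(p(v),v)$), a short enumeration of how closures of edges of $T$ can meet rules out $u=p(w^*)$ (which would violate validity of $b$) and $p(u)=w^*$ (which would force $u>w^*$), leaving $p(u)=p(w^*)$. But then the order respecting condition for $a_{k^*}=e(w^*)$ in $b$, applied to $u$ (adjacent to $p(w^*)$), requires $u<p(w^*)$ or $u>w^*$, incompatible with $p(w^*)=p(u)<u<w^*$. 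Hence $u=w^*$, and so $\pr(c^*)=b$. Since $(b,a)\in W$, the inductive construction of $W$ prevents $b$ from being the second entry of any pair in $W_{\dim b-1}$, forcing $c^*$ to be collapsible; write $c^*=W(d)=\er_m(d)$.

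The inductive hypothesis, applied to the collapsible cell $c^*$ of dimension $\dim a-2$, then yields that $c^*_m=b_m=e(d_m)$ is the minimal order respecting edge of $c^*$. Since passing from $c^*$ to $d$ only shrinks the closure at position $m$, both $w^*$ and $b_i$ remain unblocked in $d$, while $d_m$ is by definition the smallest unblocked vertex of $d$; thus $d_m\leq w^*$, and since $d_m\neq w^*$ we conclude $d_m<w^*$, i.e., $b_m<a_{k^*}$ in the edge order. A double application of Lemma~\ref{order}.2 (from $c^*$ to $b$ and then from $b$ to $a$) promotes $b_m$ to an order respecting edge of $a$ strictly smaller than $a_{k^*}$, contradicting the minimality of $a_{k^*}$. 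The main obstacle I anticipate is precisely this last case: one must manufacture the smaller order respecting edge through the inductive hypothesis on $c^*$ and then thread it back through both $\er$-reductions into $a$.
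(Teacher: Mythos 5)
Your proof is correct, but it takes a genuinely different route from the paper's. The paper gives a short, non-inductive argument that leans on the already-established \emph{sufficiency} parts of the Classification Theorem~\ref{teoremadeclasificacion}: assuming a strictly smaller order respecting edge $a_j=e(u)$, Lemma~\ref{order}.2 carries it to $b$, the contrapositive of sufficiency in item~2 (applied to the non-collapsible cell $b$) produces an unblocked vertex $a_k<u$ in $b$, the contrapositive of sufficiency in item~3(b) (applied to the non-redundant cell $a$) forces $a_k$ to be blocked in $a$, hence blocked by $a_i$, and order-respectingness of $a_i$ then gives $a_k>v$, a contradiction. Your argument instead avoids the Classification Theorem entirely, working only from Lemma~\ref{order}, the recursive definition of $W$, and direct closure analysis; your case $u<w^*$ runs the same blocking-and-order-respecting contradiction that the paper runs, but one level down (between $b$ and $c^*$ rather than $a$ and $b$), while your case $u=w^*$ exploits the recursive structure of $W$ to locate yet another order respecting edge.

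Two remarks. First, the strong induction you set up is not actually needed: in the case $u=w^*$, once you know $c^*=W(d)=\er_m(d)$, Lemma~\ref{order}.1 (applied to $d$ with $\pr(d)=c^*$) already gives that $c^*_m=e(d_m)$ is order respecting in $c^*$, and you only use that fact --- not minimality, which is what the inductive hypothesis provides. So the argument closes without induction. Second, there is a small gap at $\dim a=2$: there $\dim c^*=0$, and a $0$-cell cannot be collapsible, so ``forcing $c^*$ to be collapsible'' is vacuous; but the contradiction is in fact immediate in that case, since the definition of $W_0$ forces $(c^*,b)=(c^*,\pr(c^*))\in W_0$, making $b$ collapsible and hence incompatible with $(b,a)\in W_1$. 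You should flag that base-of-the-collapsible-case explicitly, or better, drop the induction altogether as noted above. With these adjustments your route is sound, self-contained, and arguably more elementary than the paper's, at the cost of a longer case analysis; the paper's is shorter but presupposes the sufficiency portion of the Classification Theorem.
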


The proof of Proposition~\ref{Wisinjective} uses the sufficiency of all four conditions in Theorem~\ref{teoremadeclasificacion}. Since this sufficiency part has already been established, our arguments make no  loop-type logical mistake.

\begin{proof}[Proof of Proposition~\ref{Wisinjective}]
By Lemma~\ref{order}.1, $a_i$ is an order respecting edge in $a$. Say $a_i=e(v)$ and let $a_j=e(u)$ be the minimal order respecting edge of $a$, so that $u\leq v$. Assume, for a contradiction, that $u<v$ (so $a_j\neq a_i$ and, in particular, $j\neq i$). The edge $a_j$ is order respecting in $b$ (by Lemma~\ref{order}.2, as $a_j$ is an order respecting edge in $a$ ---note this uses the assumption $j\neq i$). Since $b$ is redundant (so non-collapsible), the sufficiency part in Theorem~\ref{teoremadeclasificacion}.2 implies that $b$ has an unblocked vertex $a_k$ such that $a_k<u$. In turn, the last inequality, and the sufficiency part in Theorem~\ref{teoremadeclasificacion}.3(b) imply that $a_k$ is blocked in $a$ (and, as note above, unblocked in $b$). Thus $a_k$ is blocked in fact by $a_i$. But $a_i=e(v)$ is order respecting in $a$, so $a_k>v$ is forced, in contradiction to the previously drawn inequalities.
\end{proof}

\begin{proof}[Proof of Theorem~\ref{teo1}]
Let $a=(a_1,\ldots,a_n)$ and $a'=(a'_1,\ldots,a'_n)$ be redundant cells in $\mathcal{D}^nG$ with $W(a)=W(a')$. Let $a_i$ and $a'_{i'}$ be the corresponding minimal unblocked vertices. By Proposition~\ref{Wisinjective}, the minimal order respecting edge in
$$
(a_1,\ldots,a_{i-1},e(a_i),a_{i+1},\ldots,a_n)=(a'_1,\ldots,a'_{i'-1},e(a'_{i'}),a'_{i'+1},\ldots,a'_n)
$$
is $e(a_i)=e(a'_{i'})$ with $i=i'$. This yields $a_i=a'_{i'}$ and, in fact, $a=a'$.
\end{proof}

By construction, $W$ is equivariant with respect to the standard action of the symmetric group $\mathcal{S}_n$ on $\mathcal{D}^nG$, i.e., $a=W(b)\Rightarrow \sigma\cdot a=W(\sigma\cdot b)$, for $\sigma\in\mathcal{S}_n$. Furthermore, $W$ yields Farley-Sabalka's discrete (gradient) vector field in the quotient $U\mathcal{D}^nG$. Consequently, the following facts follow directly from their unordered analogues in~\cite{fs}\footnote{See~\cite{MR2537376}, particularly Theorem~4.2, for the $\mathcal{S}_n$-equivariant ingredient in the third item of Corollary~\ref{dim}.}:

\begin{coro}\label{dim}\begin{enumerate}
\item $W$ is a discrete gradient vector field.
\item If $U\mathcal{D}^nG$ has $m_k$ critical cells of dimension $k$, then $\mathcal{D}^nG$ has $n!m_k$ critical cells of dimension $k$.
\item $\mathcal{D}^nG$ strong and $\mathcal{S}_n$-equivariantly deformation retracts to a CW complex of dimension at most $k(n,G)$, where
$$k(n,G)= \min\left\{ \,\left\lfloor \frac{n+1-\chi(G)}{2} \right\rfloor \hspace{.5mm}, \hspace{1mm}\mbox{\emph{card}} \rule{-.5mm}{4mm}\left\{ \rule{0mm}{3.5mm}v\in V(G): d(v) \geq 3\right\} \,\right\}.$$
Indeed, a maximal tree $T$ in $G$ can be chosen so that its associated gradient vector field has critical cells of dimension at most $k(n,G)$.
\end{enumerate}\end{coro}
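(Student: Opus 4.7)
The plan is to deduce all three items from their unordered counterparts in \cite{fs} via the equivariance of $W$ and the quotient projection $\pi\colon \mathcal{D}^nG\to U\mathcal{D}^nG$. A preliminary observation is that $\mathcal{S}_n$ acts freely on the cells of $\mathcal{D}^nG$, because any cell $(a_1,\ldots,a_n)$ has factors with pairwise disjoint closures, so no non-trivial permutation of coordinates fixes it. Consequently $\pi$ is an $n!$-sheeted cellular covering and, by construction, $W$ descends to a discrete vector field $\overline{W}$ on $U\mathcal{D}^nG$ which agrees with Farley-Sabalka's gradient field.

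For item 1, I would argue by contradiction. A closed $W$-path $\tau_1,\tau_2,\ldots,\tau_m=\tau_1$ of $k$-cells in $\mathcal{D}^nG$ yields, upon termwise application of $\pi$, a closed directed walk on the $(k,k+1)$-cells of $U\mathcal{D}^nG$ in the modified Hasse diagram associated to $\overline{W}$; equivariance guarantees the identity $\overline{W}(\pi\tau_i)=\pi(W\tau_i)$, so $\pi\tau_{i+1}$ is indeed a face of $\overline{W}(\pi\tau_i)$. Consecutive projections need not be distinct, but any closed walk in a finite directed graph contains a simple directed cycle, which in this setting translates into a closed $\overline{W}$-path, contradicting that $\overline{W}$ is a gradient. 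Combined with Theorem \ref{teo1}, this proves item 1.

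For item 2, the Classification Theorem \ref{teoremadeclasificacion} characterises critical cells via combinatorial conditions (no order-respecting edges and no unblocked vertices) that are manifestly invariant under permutation of coordinates, so critical cells of $\mathcal{D}^nG$ form full $\mathcal{S}_n$-orbits; by freeness each such orbit has cardinality $n!$ and maps bijectively to a critical cell of $\overline{W}$, and counting delivers the claimed factor. For item 3, the dimension bound $k(n,G)$ on critical cells of $\overline{W}$ (for a suitably chosen maximal tree $T$) is established in \cite{fs}; since $\pi$ preserves cell dimensions, the same bound transfers upstairs to $W$, and Theorem \ref{deformacionfuerteespecial}.1 then produces a strong deformation retract of $\mathcal{D}^nG$ onto the subcomplex $(\mathcal{D}^nG)'_{k(n,G)}$, of dimension at most $k(n,G)$. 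The main point requiring care, and the genuinely new ingredient relative to the unordered case, is the $\mathcal{S}_n$-equivariance of this retract; this is where I would invoke \cite[Theorem~4.2]{MR2537376}, which upgrades the Forman collapse to a group-equivariant strong deformation retract whenever the underlying gradient field is equivariant under a cellular group action, as is the case here by construction of $W$.
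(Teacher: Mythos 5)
Your proposal takes essentially the same route as the paper: all three items are deduced from their unordered analogues in \cite{fs}, using that $W$ is $\mathcal{S}_n$-equivariant, that it descends to Farley--Sabalka's gradient field along the $n!$-sheeted cellular covering $\pi$ (which is free on cells), and that Freij's equivariant discrete Morse theory (\cite[Theorem~4.2]{MR2537376}) upgrades the collapse in item~3 to an $\mathcal{S}_n$-equivariant one. The paper records no further detail beyond this, so your write-up is, if anything, more explicit than the source.

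One spot in your item~1 argument deserves tightening. You concede that consecutive projections $\pi\tau_i$, $\pi\tau_{i+1}$ might coincide and fall back on extracting a simple directed cycle from the projected closed walk. But if two consecutive projections did coincide, the projected step $\pi\tau_i\to\overline{W}(\pi\tau_i)\to\pi\tau_{i+1}=\pi\tau_i$ would not be a legal step in the modified Hasse diagram of $\overline{W}$ (a matched pair carries only the upward arrow, so there is no downward edge from $\overline{W}(\pi\tau_i)$ back to $\pi\tau_i$); the projected sequence would then fail to be a directed walk at all, and in the degenerate case where every $\pi\tau_i$ coincides your argument produces no cycle and hence no contradiction. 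In fact the hedge is unnecessary: $\tau_i$ and $\tau_{i+1}$ are distinct $k$-faces of the same $(k{+}1)$-cube $W(\tau_i)$, and distinct $k$-faces of a cube in $\mathcal{D}^nG$ have distinct \emph{multisets} of coordinates, since each $k$-face replaces exactly one edge coordinate by one of its endpoints and the disjoint-closure condition on cells of $\mathcal{D}^nG$ rules out two different such replacements producing the same multiset. As $\mathcal{S}_n$ acts by permuting coordinates, $\pi$ is therefore injective on the faces of any fixed cell, so $\pi\tau_i\neq\pi\tau_{i+1}$ automatically, the projected sequence is already a closed $\overline{W}$-path, and your contradiction is immediate.
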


\begin{remark}\label{hshiausy}{\em
When $G$ is a tree, the maximal tree $T$ in Corollary~\ref{dim}.3 is forced to be $G$ itself, and $k(n,G)$ becomes
$$k(n,G)= \min\left\{ \,\left\lfloor \frac{n}{2} \right\rfloor \hspace{.5mm}, \hspace{1mm}\mbox{\emph{card}} \rule{-.5mm}{4mm}\left\{ \rule{0mm}{3.5mm}v\in V(G): d(v) \geq 3\right\} \,\right\}.$$
}\end{remark}

\section{Combinatorics in the 1-skeleton of the Morse model}\label{secciondecombinatoria}
In this section $G=T$ is a fixed tree with at least one essential vertex. For $r\geq1$, set $$m_r=m_r(T)=\frac{1}{r!}\sum(d(v)-1)(d(v)-2)\cdots(d(v)-r)=\sum\binom{d(v)-1}{r},$$
where both summations run over the vertices $v$ of $T$ with $d(v)>r$, and let $B_r$ stand for the $r$-banana graph, i.e., the graph with two vertices and $r$ edges, none of which is a loop.

\smallskip
As a warmup, and for future reference, we use Farley-Sabalka's model to reprove Theorem~11.1 in~\cite{mp}:
\begin{teo}\label{farbergeneralized}
If $T$ is 2-sufficiently subdivided, then $\mathcal{D}^2T$ has the $\mathcal{S}_2$-equivariant homotopy type of the banana graph $B_{2m_2}$.
\end{teo}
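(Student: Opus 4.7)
The plan is to apply Theorem~\ref{ftdmt} to Farley-Sabalka's gradient field $W$ on $\mathcal{D}^2T$ and identify the resulting Morse CW model with $B_{2m_2}$. By Remark~\ref{hshiausy} we have $k(2,T)\leq 1$, so Corollary~\ref{dim}.3 already supplies an $\mathcal{S}_2$-equivariant strong deformation retract of $\mathcal{D}^2T$ onto a CW complex of dimension at most one; what remains is therefore purely combinatorial.

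First I would enumerate critical cells via Theorem~\ref{teoremadeclasificacion}. The root $0$ is always blocked (since $e(0)$ is undefined), while a non-root vertex $v$ is blocked in a 0-cell exactly when the other coordinate equals its tree parent $\mathrm{par}(v)$; a short case analysis then gives precisely two critical 0-cells, $(0,1)$ and $(1,0)$, where $1$ is the unique neighbor of the root. For critical 1-cells, one coordinate is a tree edge $(x,y)=e(y)$ and the other a vertex $a$; the classification forces $a$ and $y$ to be distinct children of $x$ with $a<y$ (so that $(x,y)$ fails to be order-respecting) and $a$ to be blocked by $(x,y)$ (automatic once $a$ is a sibling of $y$). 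Hence each essential vertex $x$ contributes $\binom{d(x)-1}{2}$ critical 1-cells with the edge in the first factor, giving $m_2$ in total; doubling for the edge in the second factor yields $2m_2$, matching the edge count of $B_{2m_2}$.

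The nontrivial step, and the one I expect to be the main obstacle, is showing that no critical 1-cell is a loop: each must connect $(0,1)$ to $(1,0)$ rather than a critical 0-cell to itself. For this I would establish an order-preservation property of the $W$-flow on 0-cells: a single principal-reduction step at $(u,v)$ replaces the smallest unblocked coordinate $w$ by $\mathrm{par}(w)<w$, and a case check (on whether $u<v$ or $u>v$, and whether the smaller coordinate is the root) shows that this step never reverses the sign of $u-v$. Iterating, the flow from $(u,v)$ terminates at $(0,1)$ whenever $u<v$ and at $(1,0)$ whenever $u>v$. Applied to the vertex boundaries $(x,a)$ and $(y,a)$ of a critical 1-cell $((x,y),a)$, the inequalities $x=\mathrm{par}(a)<a<y$ place them in distinct basins of attraction, so the 1-cell is an honest edge between $(0,1)$ and $(1,0)$; the swap $(a,(x,y))$ is handled symmetrically.

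Finally, $\mathcal{S}_2$-equivariance is immediate from the construction of $W$: the coordinate swap exchanges the two critical 0-cells and pairs the $2m_2$ critical 1-cells into $m_2$ orbits of size two, realizing precisely the $\mathcal{S}_2$-action on $B_{2m_2}$ that swaps its two vertices.
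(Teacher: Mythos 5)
Your argument is correct and follows essentially the same route as the paper: identify the two critical $0$-cells and the $2m_2$ critical $1$-cells via the Classification Theorem, count them the same way, and verify the banana-graph structure by tracing the $W$-flow from the two vertex faces of each critical $1$-cell to distinct critical vertices. The only cosmetic difference is that you package the ``lands in distinct vertices'' step as a sign-preservation invariant for the flow on $0$-cells, whereas the paper writes out the explicit gradient paths ending at $(0,1)$ and $(1,0)$; these are the same computation.
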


\begin{remark}\label{clearremark}{\em
As it will be clear from the proof of Theorem~\ref{farbergeneralized}, the generator $\sigma\in\mathcal{S}_2$ interchanges the two vertices $(1,0)$ and $(0,1)$ of $B_{2m_2}$. The full effect of the (continuous and fix-point free) map $\sigma\colon B_{2m_2}\to B_{2m_2}$ is then forced on us: $\sigma$ induces a pairing $P_\sigma$ on the edges of $B_{2m_2}$ in such a way that each edge~$e$, say oriented from $(1,0)$ to $(0,1)$, is sent homeomorphically under $\sigma$ into the $\sigma$-paired edge $P_\sigma(e)$ oriented backwards, i.e., from $(0,1)$ to $(1,0)$ (cf.~\cite[Section~11]{mp} or, more generally, Theorem~4.2 in~\cite{MR2537376}). As shown later in the section, such an ``$\mathcal{S}_2$-equivariant banana-type'' phenomenon in $\mathcal{D}^2T$ propagates to other $\mathcal{D}^nT$ ($n\geq3$), where we find in fact more general and nicely organized ``$\mathcal{S}_n$-equivariant multi-edge cyclic-graph'' phenomena.
}\end{remark}
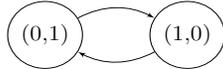
\begin{figure}[ht!]
	\centering
\resizebox{3cm}{.9cm}{
\begin{tikzpicture}
\tikzset{vertex/.style = {shape=circle,draw,minimum size=1.5em}}
\tikzset{edge/.style = {->,> = latex'}}
		\node[vertex] (1) at ( 2.3, 3.8) {(0,1)};
		\node[vertex] (2) at ( 4.8, 3.8) {(1,0)};
		\draw[edge] (2) to [bend left] (1);
                   \draw[edge] (1) to [bend left] (2);
	\end{tikzpicture}}
	\caption{Graph onto which $(\mathcal{D}^2T)'_1$ strong and $\mathcal{S}_2$-equivariantly deformation retracts. Each of the two arrows represents $m_2$ edges, with arrow direction standing for the $\mathcal{S}_2$-action.}
	\label{dcero}
\end{figure}
\begin{proof}[Proof of Theorem~\ref{farbergeneralized}]
By the Classification Theorem~\ref{teoremadeclasificacion}, $\mathcal{D}^2T$ has only two critical vertices, namely $(1,0)$ and $(0,1)$, and no critical cells of dimension greater than 1, so $\mathcal{D}^2T$ has the $\mathcal{S}_2$-equivariant homotopy type of a graph with two vertices. We take a closer look at the structure of this graph, showing first that it has no loops (so it is connected), i.e.~it is a banana graph, and then we count its edges.

We start from $(\mathcal{D}^2T)'_1$, which is a strong and $\mathcal{S}_2$-equivariant deformation retract of $\mathcal{D}^2T$ in view of Theorem~\ref{deformacionfuerteespecial} and Corollary~\ref{dim}. As explained in the paragraph preceding Theorem~\ref{ftdmt}, the graph we want is obtained from $(\mathcal{D}^2T)'_1$ by further collapsing all $W$-collapsible 1-cells. Consequently, in order to see that the resulting graph is banana-type, all we need to check is that the (necessarily) unique gradient paths from the endpoints of any given critical edge in $\mathcal{D}^2T$ land in different critical vertices.

By the Classification Theorem~\ref{teoremadeclasificacion}, a critical 1-cell of $\mathcal{D}^2T$ is either of the form $((a,c),b)$ or $(b,(a,c))$, with $b$ adjacent to $a$ and $a<b<c$. In the first case, the facets of $((a,c),b)$ are $(a,b)$ and $(c,b)$, and it is transparent from the definitions that the asserted unique $W$-paths are
$$(a,b)=(a_k,b),(a_{k-1},b),\ldots,(a_0,b)=(0,b),(0,a)=(0,a_k),(0,a_{k-1}),\ldots,(0,a_1)=(0,1)$$ and 
$$(c,b),(c,a)=(c,a_k),(c,a_{k-1}),\ldots,(c,a_0)=(c,0),(a,0)=(a_k,0),(a_{k-1},0),\ldots,(a_1,0)=(1,0),$$ 
where $0=a_0,1=a_1,a_2,\ldots,a_k=a$ is the sequence of vertices encountered in the unique directed $0a$-path in $T$. The case of a critical cell $(b,(a,c))$ is completely parallel, and verifiable either by direct analysis, as above, or as a $\mathcal{S}_2$-equivariance consequence of the first case.

It remains to count the critical edges of $\mathcal{D}^2T$ or, equivalently, the critical edges of $U\mathcal{D}^2T$. Recall the function $g\colon V(T)\times V(T)\to\mathbb{Z}$ defined at the beginning of Section~\ref{SeccionFSGF}. We already noted that the critical edges of $U\mathcal{D}^2T$ are those of the form $\{b,(a,c)\}$, with $b$ adjacent to $a$, and $a<b<c$. In particular $a$ is an essential vertex. Since $0<g(a,b)<g(a,c)$, the number of critical 1-cells of $U\mathcal{D}^2T$ involving a fixed essential vertex $a$ is then
$$
1+2+\cdots+(d(a)-2)=\frac{(d(a)-1)(d(a) -2 )}2
$$
(for instance, the summand $d(a)-2$ on the left hand-side of the previous equality accounts for instances with $g(a,c)=d(a)-1$.) Consequently, the number of critical 1-cells of $U\mathcal{D}^2T$ is $m_2$, and the result follows from Corollary~\ref{dim}.2.
\end{proof}

\begin{remark}\label{inclusion}{\em
Let $T'$ be a subtree of $T$ with compatible orderings of vertices and edges. As explained in the proof of~\cite[Corollary~11.3]{mp}, the compatibility condition can be attained by restricting an embedding of $T$ in the plane to one for $T'$, and choosing corresponding roots $0'$ and $0$ for $T'$ and $T$ so that the $T$-path connecting $0$ to $0'$ is disjoint from $T'-\{0'\}$. It follows from the previous proof (keep in mind the explanation given in the paragraph above Theorem~\ref{ftdmt}) that $\mathcal{S}_2$-equivariant homotopy equivalences $\mathcal{D}^2T'\simeq B'$ and $\mathcal{D}^2T\simeq B$ can be constructed in such a way that the obvious embedding $\mathcal{D}^2T'\hookrightarrow\mathcal{D}^2T$ corresponds, up to homotopy, to a $\mathcal{S}_2$-equivariant subgraph inclusion $B'\hookrightarrow B$ between the corresponding banana-graph models. For instance, if $T'$ is a $Y$-graph embedded around a small neighborhood of an essential vertex in $T$ (with compatible orderings, as above), then $B'$ embeds in $B$ as the circle formed by the $P_\sigma$-pair of critical edges determined by $T'$ (see Remark~\ref{clearremark}). 
}\end{remark}

We use the same technique in order to study configuration spaces with more particles. We treat first the case of three particles, as it is still special ---see the sentence containing~(\ref{noexcepcional}) in the next section--- and accessible through explicit pictures, in addition to shedding good light on the type of phenomena we find for configurations with more particles.
\begin{teo} \label{n3}
If $T$ is 3-sufficiently subdivided, then $\mathcal{D}^3T$ has the $\mathcal{S}_3$-equivariant homotopy type of the graph $G(3,T)$ obtained by splicing the graphs depicted in Figures~\ref{3} and~\ref{d3}. (The $\mathcal{S}_3$-action on $G(3,T)$ is described along the proof below.)
\end{teo}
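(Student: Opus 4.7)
The plan is to imitate, for $n=3$ particles, the strategy of the proof of Theorem~\ref{farbergeneralized}. Since $T$ has at least one essential vertex, Remark~\ref{hshiausy} gives $k(3,T)=\lfloor 3/2\rfloor=1$, so by Corollary~\ref{dim}.3 the complex $\mathcal{D}^3T$ strong and $\mathcal{S}_3$-equivariantly deformation retracts onto a one-dimensional CW complex. By Theorem~\ref{deformacionfuerteespecial}.1 the retract lands in $(\mathcal{D}^3T)'_1$, and collapsing the $W$-collapsible 1-cells along their unique non-closed $W$-paths (as in the paragraph preceding Theorem~\ref{ftdmt}) produces an $\mathcal{S}_3$-equivariant graph with vertex set the critical 0-cells of $\mathcal{D}^3T$ and edge set the critical 1-cells, attached according to the unique gradient paths from their two boundary endpoints.

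I would first determine the critical 0-cells via Theorem~\ref{teoremadeclasificacion}.1: criticality of $(a_1,a_2,a_3)$ reduces to the condition that every coordinate be blocked. Because the parent relation on a tree is acyclic and only the vertex $0$ is blocked vacuously, a short chain-counting argument forces any critical 0-cell to be a permutation of $\{0,1,c\}$ with $c$ a child of vertex~$1$. The 3-sufficient subdivision hypothesis forces $d(1)=2$, so $c$ is uniquely determined (namely, vertex~$2$), giving exactly six critical 0-cells that form a single free $\mathcal{S}_3$-orbit.

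Next, again by Theorem~\ref{teoremadeclasificacion}.1, a critical 1-cell has exactly one coordinate a non--order-respecting edge $e(v)$ and two coordinates blocked vertices. A case split on the position of $e(v)$ in $T$ sorts these cells into two families: those whose edge coordinate is realised on the root path near vertex~$1$ (corresponding to the critical edges catalogued by Figure~\ref{3}), and those whose edge coordinate is incident to an essential vertex $w$ with $d(w)\geq 3$, with the other two particles occupying branches of $w$ so as to both block each other and defeat the order-respecting condition (corresponding to the critical edges catalogued by Figure~\ref{d3}). For each critical 1-cell I would then trace the unique $W$-path issuing from each of its two vertex endpoints (sequences of principal reductions sliding particles along $0v$-paths of $T$, with occasional intermediate un-blocking reductions on the other coordinates) down to a critical 0-cell. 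Verifying that the two endpoints land in \emph{distinct} permutations of $\{0,1,2\}$ guarantees the resulting 1-complex has no loops, and matches the attaching data displayed in Figures~\ref{3} and~\ref{d3}; splicing the two graphs along their shared set of six vertices yields $G(3,T)$.

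The $\mathcal{S}_3$-action on $G(3,T)$ is automatic from the equivariance of $W$: it is the free transitive action on the six critical 0-cells, and it permutes the critical 1-cells within each family compatibly with the cyclic branch-labelling, extending the edge-inversion pairing of Remark~\ref{clearremark} to the richer 3-particle orbit structure. The main obstacle will be the bookkeeping in the second step: carefully parametrising the critical 1-cells at an arbitrary essential vertex (where three distinct particle roles interact with the cyclic labelling of incident branches) and tracing the gradient flow on both endpoints with enough care to identify the distinct pair of critical 0-cells each critical edge connects. The $\mathcal{S}_3$-equivariance of the final graph serves as a useful sanity check throughout.
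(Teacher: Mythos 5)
Your overall strategy matches the paper's: reduce via Corollary~\ref{dim}.3 and Theorem~\ref{deformacionfuerteespecial} to $(\mathcal{D}^3T)'_1$, collapse the $W$-collapsible $1$-cells, identify critical $0$-cells as the six permutations of $(0,1,2)$ (your chain-of-blocking argument for this, using $d(1)=2$ from $3$-sufficient subdivision, is sound), and trace the unique gradient paths from the two endpoints of each critical $1$-cell to distinct critical vertices, with $\mathcal{S}_3$-equivariance doing the rest.

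However, your proposed case split on ``the position of $e(v)$ in $T$'' is misconceived and would not lead to the two families you describe. By the Classification Theorem~\ref{teoremadeclasificacion}, a critical $1$-cell (up to $\mathcal{S}_3$-action) is $\{(a,c),b,d\}$ with $a<b<c$, $b$ adjacent to $a$, so the edge coordinate $(a,c)=e(c)$ is \emph{always} incident to an essential vertex $a$ having $b$ and $c$ as two of its non-root-side children; there is no subfamily ``on the root path near vertex $1$''. The dichotomy separating Figures~\ref{3} and~\ref{d3} is not determined by the position of the edge but by the position of the \emph{third} particle $d$: Figure~\ref{d3} catalogues precisely the cells with $d=0$ (the root), while Figure~\ref{3} collects the cells where $d$ is blocked by $a$, by $b$, or by $c$. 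The paper's proof works through five cases accordingly (two when $d$ is blocked by $a$ depending on whether $d<c$ or $d>c$, one each for blocking by $b$ and by $c$, one for $d=0$), and these five cases are exactly what produce the correct gradient-flow targets and edge orientations in the figures. Finally, your proposal does not address the edge multiplicities $m_2+m_3$ and $m_2$ appearing in the two figures; the enumeration of critical $1$-cells of each type over a fixed essential vertex (counting choices of labels $g(a,b)<g(a,c)$ and of the extra vertex $d$) is an essential part of the statement you are proving and cannot be deferred.
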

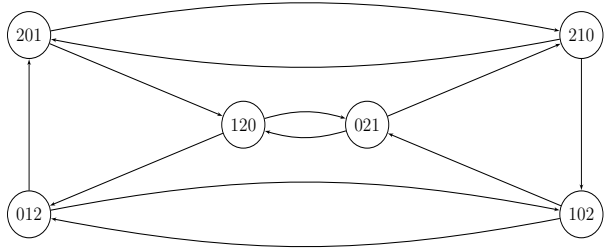
\begin{figure}[ht!]
\begin{center}
\resizebox{8cm}{3.5cm}{
\begin{tikzpicture}
\tikzset{vertex/.style = {shape=circle,draw,minimum size=1.5em}}
\tikzset{edge/.style = {->,> = latex'}}
\node[vertex] (a) at  ( 1, 1) {$012$};
\node[vertex] (b) at ( 1, 5) {$201$};
\node[vertex] (c) at  ( 14.38, 1) {$102$};
\node[vertex] (d) at  ( 14.38, 5) {$210$};
\node[vertex] (a1) at ( 6.19, 3) {$120$};
\node[vertex] (a2) at ( 9.19, 3) {$021$};
\draw[edge] (a) -- (b);
\draw[edge] (c) to (a2);
\draw[edge] (a2) to (d);
\draw[edge] (d) to (c);
\draw[edge] (a1)  to (a);
\draw[edge] (b) to (a1);
\draw[edge] (a1) to [out=15,in=165] (a2);
\draw[edge] (a2) to [out=195,in=-15] (a1);
\draw[edge] (a) to [out=10,in=170] (c);
\draw[edge] (c) to [out=190,in=-10] (a);
\draw[edge] (b) to [out=10,in=170] (d);
\draw[edge] (d) to [out=190,in=-10] (b);
\end{tikzpicture}}
\end{center}\caption{First half of the edge structure of $G(3,T)$. Each arrow represents $m_2+m_3$ edges.}\label{3}
\end{figure}

\begin{figure}
	\centering
\resizebox{2.5cm}{2.7cm}{
\begin{tikzpicture}
\tikzset{vertex/.style = {shape=circle,draw,minimum size=1.5em}}
\tikzset{edge/.style = {->,> = latex'}}
		\node[vertex] (1) at ( 2.3, 4.2) {201};
		\node[vertex] (2) at ( 4.8, 4.2) {102};
		\node[vertex] (3) at ( 2.3, 5.8) {012};
		\node[vertex] (4) at ( 4.8, 5.8) {021};
		\node[vertex] (5) at ( 2.3, 7.4) {120} ;
		\node[vertex] (6) at ( 4.8, 7.4) {210};
		\draw[edge] (2) to [out=200,in=-20] (1);
		\draw[edge] (4) to [out=200,in=-20] (3);
		\draw[edge] (6) to [out=200,in=-20] (5);
                   \draw[edge] (1) to [out=20,in=160] (2);
		\draw[edge] (3) to [out=20,in=160] (4);
		\draw[edge] (5) to [out=20,in=160] (6);
	\end{tikzpicture}}
	\caption{Second half of the edge structure of $G(3,T)$. Each arrow represents $m_2$ edges.}
	\label{d3}
\end{figure}
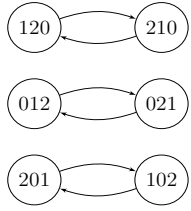

\begin{proof}
By the Classification Theorem~\ref{teoremadeclasificacion}, $\mathcal{D}^3T$ has six critical vertices, namely, the six permutations of $(0,1,2)$. (We simplify the notation $(i,j,k)$ to $ijk$ in Figures~\ref{3} and~\ref{d3}.) As in Theorem~\ref{farbergeneralized}, the graph $G(3,T)$ we want is obtained from $(\mathcal{D}^3T)'_1$ by collapsing all $W$-collapsible 1-cells. Take a critical edge $\{(a,c),b,d\}$ of $U\mathcal{D}^3T$, with $a<b<c$ and $b$ adjacent to $a$, and fix the unique representative $\varepsilon=((a,c),b,d)$ in $\mathcal{D}^3T$ which satisfies in addition the condition $b<d$ whenever $d$ is not blocked by $c$ or by the root vertex 0 (as in the first three instances in Figure~\ref{figura2}). The faces of $\varepsilon$ are $(a,b,d)$ and $(c,b,d)$. We shall analyze the (again unique) gradient path from each of these faces to a critical vertex; the corresponding situation for other representatives will then follow from the $\mathcal{S}_3$-equivariance. We consider all possible cases depending on how $d$ is blocked in~$\varepsilon$ (see Figure~\ref{figura2}).

\begin{figure}[ht!]
\begin{center}
	\begin{tikzpicture}[every node/.style={circle, draw, scale=.6}, scale=1.0, rotate = 180, xscale = -1]
                  \node (21) at ( -1.2, 2.3) {b};
		\node (22) at ( -.5, 2) {d};
		\node (23) at ( .2, 2.3) {c};
		\node (24) at ( -.5, 3) {a};
		\node (25) at ( -.5, 4) {0};
		
		\node (1) at ( 1.5, 2.3) {b};
		\node (2) at ( 3.3, 3) {d};
		\node (3) at ( 2.9, 2.3) {c};
		\node (4) at ( 2.2, 3) {a};
		\node (5) at ( 2.2, 4) {0};
		
		\node (6) at ( 4.3, 1.6) {d};
		\node (7) at ( 5.7, 3) {a};
		\node (8) at ( 5, 2.3) {b};
		\node (9) at ( 6.4, 2.3) {c};
		\node (10) at (5.7, 4) {0};
		
		\node (11) at ( 7.7, 2.3) {b};
		\node (12) at ( 8.4, 3) {a};
		\node (13) at ( 9.1, 2.3) {c};
		\node (14) at ( 9.9, 1.6) {d};
		\node (15) at ( 8.4, 4) {0};
		
		\node (16) at ( 11.2, 2.3) {b};
		\node (17) at ( 12.6, 2.3) {c};
		\node (18) at ( 11.9, 3) {a};
		\node (19) at ( 11.9, 4) {0};
		
                  \draw (24) -- (21);
		\draw (24) -- (22);
		\draw (24) -- (23);
		\draw[dashed,-] (25) -- (24);
		\draw (4) -- (1);
		\draw (4) -- (2);
		\draw (4) -- (3);
		\draw[dashed,-] (5) -- (4);
		\draw (8) -- (6);
		\draw (7) -- (8);
		\draw (9) -- (7);
		\draw[dashed,-] (10) -- (7);
		\draw (12) -- (11);
		\draw[dashed,-] (15) -- (12);
		\draw (13) -- (12);
		\draw (14) -- (13);
		\draw (18) -- (16);
		\draw (17) -- (18);
		\draw[dashed,-] (19) -- (18);
	\end{tikzpicture}
\end{center} \caption{Vertex $d$ can be blocked in $\varepsilon$ either by $a$ (first two diagrams), by $b$, by $c$, or else because it is the root vertex $0$. Dashed edges represent the unique path in $T$ connecting $a$ to the root vertex. The five pictures correspond to the situations treated in the five cases of the proof.}\label{figura2}
\end{figure}
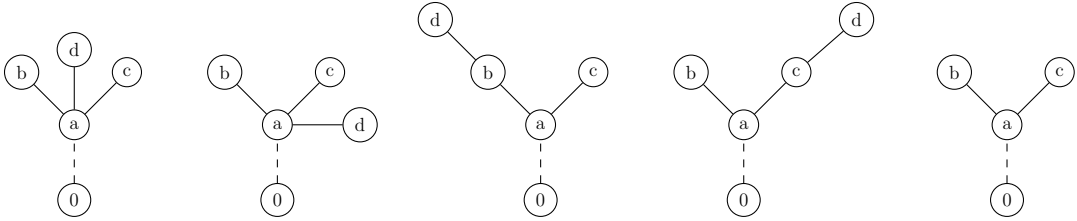

\noindent \textbf{Case 1:} (Just as $b$,) $d$ is blocked in $\varepsilon$ by $a$ with $a<d<c$. Recall we have chosen the representative $((a,c),b,d)$ in $\mathcal{D}^3T$ having $b<d$ (first diagram in Figure~\ref{figura2}). Then, as in the proof of Theorem~\ref{farbergeneralized}, it is easy to see that the unique gradient path starting at $(a,b,d)$ ends at $(0,1,2)$, and that the unique gradient path starting at $(c,b,d)$ ends at $(2,0,1)$. We write the latter facts by means of the short hand 
$$
\mbox{$(a,b,d)\stackrel{\mbox{\tiny grad }}{\longrightarrow}(0,1,2)$ \ \ and \ \ $(c,b,d)\stackrel{\mbox{\tiny grad }}{\longrightarrow}(2,0,1)$.}
$$
As explained in the paragraph preceding Theorem~\ref{ftdmt}, this means that, after performing the $W$-collapsing instructions, we get a family of edges in the Morse model connecting the vertices $(0,1,2)$ and $(2,0,1)$. These edges have a canonical orientation, from $(0,1,2)$ to $(2,0,1)$, which comes from the canonical orientation of the critical edge $((a,c),b,d)$ from its ``initial'' face $(a,b,d)$ to its ``final'' face $(c,b,d)$ ---recall $a<c$. Further, the cardinality (or ``multiplicity'', as we will call it below) of the resulting family of edges is the number of critical edges $((a,c),b,d)$ in this case (the counting is done below). As the reader can easily check, taking into account the $\mathcal{S}_3$-equivariance, we are led to the structure of six straight arrows in Figure~\ref{3}. For instance, under the action of the transposition $(1,2)\in\mathcal{S}_3$, each of the Morse edges from $(0,1,2)$ to $(2,0,1)$ becomes a corresponding Morse edge from $(1,0,2)$ to $(0,2,1)$. Regarding multiplicity, i.e., the number of possible critical edges $((a,c),b,d)$ fitting in this case, we note that, by fixing $a$, there are
$$
\binom22+\binom32+\cdots+\binom{d(a)-2}2=\frac{(d(a)-3)(d(a)-2)(d(a)-1)}{3!}
$$
such possibilities (for instance, the first summand $\binom22$ above counts the possibilities with $g(a,c)=3$). So we have a total of $m_3$ possibilities after letting $a$ vary over the essential vertices. Lastly, regarding the $\mathcal{S}_3$-action on $\mathcal{D}^3T$ (or, equivalently, the monodromy of the covering $\mathcal{D}^3T\to U\mathcal{D}^3T$), the orientation we have chosen for the unordered critical edge $\{(a,c),b,d\}$ and for its ordered representatives yields the indicated orientations for the batch of straight arrows in Figure~\ref{3}: triangles in that figure get oriented clockwise. For instance, the cycle $\sigma=(1,2,3)\in\mathcal{S}_3$ acts on the triangle on the left (right) of Figure~\ref{3} as a 120-degrees clockwise (counterclockwise) rotation. 

\smallskip
\noindent \textbf{Case 2:} $d$ is  blocked in $\varepsilon$ by $a$, but $a<b<c<d$ (second diagram in Figure~\ref{figura2}). We have
$$
\mbox{$(a,b,d)\stackrel{\mbox{\tiny grad}}{\longrightarrow}(0,1,2)$ \ \ and \ \ $(c,b,d)\stackrel{\mbox{\tiny grad}}{\longrightarrow}(1,0,2)$,}
$$
and the $\mathcal{S}_3$-equivariance now leads to the structure of six bent arrows in Figure~\ref{3}. Again, the multiplicity of each such edge is the number of possible critical edges $((a,c),b,d)$ fitting in this case. Fixing $a$, there are
$$
1\cdot(d(a)-3)+2\cdot(d(a)-4)+\cdots+(d(a)-3)\cdot1=\frac{(d(a)-3)(d(a)-2)(d(a)-1)}{3!}
$$
such possibilities (for instance, the first summand above, i.e.~$1\cdot(d(a)-3)$, counts the possibilities with $g(a,c)=2$; this counting strategy is used repeatedly below without further notice). So, as in the previous case, we have a total of $m_3$ possibilities after letting $a$ vary over the essential vertices. Regarding the corresponding monodromy, note that the batch of bent arrows in Figure~\ref{3} is oriented in such a way that ``twin'' arrows have opposite orientations.

\smallskip
\noindent \textbf{Case 3:} $d$ is blocked by $b$ (third diagram in Figure~\ref{figura2}). Since $T$ is 3-sufficiently subdivided, in fact $a<b<d=b+1<c$. This time we have
$$
\mbox{$(a,b,d)\stackrel{\mbox{\tiny grad}}{\longrightarrow}(0,1,2)$ \ \ and \ \ $(c,b,d)\stackrel{\mbox{\tiny grad}}{\longrightarrow}(2,0,1)$}
$$
and, as in Case 1, the $\mathcal{S}_3$-equivariance leads to structure of the six straight arrows in Figure~\ref{3}, with the exact same monodromy. The difference being on the multiplicity of this second batch of straight arrows: The number of possible critical edges $((a,c),b,d)$ fitting in this case with a fixed essential vertex $a$ is
$$
1+2+\cdots+(d(a)-2)=\frac{(d(a)-2)(d(a)-1)}{2},
$$
so we have a total of $m_2$ possibilities after letting $a$ vary over the essential vertices.

\smallskip
\noindent \textbf{Case 4:} $d$ is blocked by $c$ (fourth diagram in Figure~\ref{figura2}). Now we actually have $a<b<c<d=c+1$, and
$$
\mbox{$(a,b,d)\stackrel{\mbox{\tiny grad}}{\longrightarrow}(0,1,2)$ \ \ and \ \ $(c,b,d)\stackrel{\mbox{\tiny grad}}{\longrightarrow}(1,0,2)$.}
$$ So, as in Case 2 above, the $\mathcal{S}_3$-equivariance leads to the structure of six bent arrows in Figure~\ref{3}, with the exact same monodromy. The multiplicity is $m_2$, by the exact same counting in Case~3. 

\smallskip
\noindent \textbf{Case 5:} $d=0$ (last diagram in Figure~\ref{figura2}). We have $d=0<a<b<c$, and
$$
\mbox{$(a,b,d)\stackrel{\mbox{\tiny grad}}{\longrightarrow}(1,2,0)$ \ \ and \ \ $(c,b,d)\stackrel{\mbox{\tiny grad}}{\longrightarrow}(2,1,0)$,}
$$ which leads, by $\mathcal{S}_3$-equivariance, to the structure of six bent arrows in Figure~\ref{d3}, with multiplicity~$m_2$ (by the exact same counting as that in the previous two cases), and monodromy such that ``twin'' arrows have opposite orientations.
\end{proof}

\subsection{The general case $\mathcal{D}^nT$.}
The relevant homotopy information in the 1-skeleton of the Morse model for $\mathcal{D}^nT$ can be addressed by conveniently organizing the case-by-case analysis in the proof above. This leads us to a graph with a structure of edges by ``levels''. For instance, in the case of $\mathcal{D}^2T$ (Farber's case), there is a single edge level (the banana graph in Figure~\ref{dcero}), while two levels (shown in Figures~\ref{3} and~\ref{d3}) arise in the case of $\mathcal{D}^3T$. Note that there are only banana-graph components in the level shown by Figure~\ref{d3}. But, in addition to banana (sub)graphs, we also find multi-edge 3-cycles in the ``higher'' level shown by Figure~\ref{3}. As we will see below, multi-edge $c$-cycles with $c\leq n$ appear organized by levels in the case of $\mathcal{D}^nT$.

In full detail, let $G(n,T)$ be the graph obtained from $(\mathcal{D}^nT)_1^{'}$ after collapsing all $W$-collapsible 1-cells. Our interest in $G(n,T)$ comes from the fact that, since the difference between $(\mathcal{D}^nT)_1$ ---the complete 1-skeleton of $\mathcal{D}^nT$--- and $(\mathcal{D}^nT)_1^{'}$ consists of 1-cells that are redundant in $\mathcal{D}^nT$, the composite $G(n,T)\simeq (\mathcal{D}^nT)_1^{'}\hookrightarrow \mathcal{D}^nT$ induces an epimorphism at the level of fundamental groups. Our aim is to get full control on the generators of the fundamental group of $\mathcal{D}^nT$ (and thus of $\mathcal{C}^nT$) by understanding in full the combinatorial structure of $G(n,T)$.

As explained in the paragraph preceding Theorem~\ref{ftdmt}, there is a one-to-one correspondence between vertices (respectively, edges) of $G(n,T)$ and critical $0$-cells (respectively, $1$-cells) of $(\mathcal{D}^nT)'_1$. For instance, by the Classification Theorem~\ref{teoremadeclasificacion}, $(\mathcal{D}^nT)'_1$ has $n!$ critical $0$-cells, namely, the permutations of $(0,1,\ldots,n-1)$. The vertex set $V(G(n,T))$ of $G(n,T)$ will thus be identified with the underlying set of the symmetric group $\mathcal{S}_n$. Explicitly, a permutation $\sigma\in\mathcal{S}_n$ will stand for the vertex of $G(n,T)$ corresponding to the critical $0$-cell $(a_1,\ldots,a_n)$ of $(\mathcal{D}^nT)'_1$, where
\begin{equation}\label{0iden}
\sigma(i)=a_i+1 \mbox{ for } i\in\{1,2,\ldots,n\}.
\end{equation}
The corresponding identification between edges $e$ of $G(n,T)$ and critical 1-cells $c$ in $(\mathcal{D}^nT)_1^{'}$ will be spelled out by writing $e=e(c)$ and $c=c(e)$.

The standard (right) action of a permutation $\sigma\in\mathcal{S}_n$ on a configuration $(x_1,\ldots,x_n)\in\mathcal{C}^nT$ is
\begin{equation}\label{raccion}
(x_1,\ldots,x_n)\cdot\sigma=(x_{\sigma(1)},\ldots,x_{\sigma(n)}).
\end{equation}
This restricts to the (right) action of $\mathcal{S}_n$ on the discretized configuration space $\mathcal{D}^nT$. Taking into account~(\ref{0iden}), this means that the (right) action $ V(G(n,T))\times \mathcal{S}_n\to V(G(n,T))$ is given by product of permutations. In the rest of the section we (first) analyze and (then) prove the following full description of the edge set $E(G(n,T))$ of $G(n,T)$ and of the action $E(G(n,T))\times\mathcal{S}_n\to E(G(n,T))$:
\begin{teo}\label{elcasotegeneral}
If $T$ is $n$-sufficiently subdivided, then the edge set of $\hspace{.4mm}G(n,T)$ decomposes as a pairwise disjoint union 
$$
E(G(n,T))=\bigsqcup_{2\leq j\leq i\leq n}E_{i,j}\,
$$
where, for each $i$ and $j$ as above, $E_{i,j}$ consists of
\begin{equation}\label{formsimp}
\sum_{\ell=2}^i \binom{\,i-2\,}{\,\ell-2\,}m_{\ell}
\end{equation}
repeated edges of the form $(\tau,\sigma_{i,j}^{-1}\cdot \tau)$ for each permutation $\tau\in\mathcal{S}_n$. Here $\sigma_{i,j}\in\mathcal{S}_n$ is the cycle $n-i+1\to n-i+2\to n-i+3\to \cdots\to n-i+j\to n-i+1$. In these terms, the $\mathcal{S}_n$-action on edges is such that a permutation $\mu\in\mathcal{S}_n$ takes each (oriented) edge of the form $(\tau,\sigma_{i,j}\cdot \tau)$ homeomorphically onto an (oriented) edge of the form $(\tau\cdot\mu,\sigma_{i,j}\cdot \tau\cdot \mu)$.
\end{teo}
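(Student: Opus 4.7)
The strategy generalises the proofs of Theorems~\ref{farbergeneralized} and~\ref{n3}: use the Classification Theorem~\ref{teoremadeclasificacion} to enumerate the critical 1-cells, trace the two endpoint gradient paths of each to a critical vertex, and then organise the resulting edges by their monodromy. By Theorem~\ref{teoremadeclasificacion} a critical 1-cell $\alpha=(\alpha_1,\ldots,\alpha_n)$ has exactly one edge coordinate $\alpha_I=e(c)=(a,c)$, a distinguished vertex $\alpha_K=b$ adjacent to $a$ with $a<b<c$ (the obstruction to $(a,c)$ being order-respecting), and $n-2$ blocked vertices $d_1,\ldots,d_{n-2}$; each $d_k$ is blocked by $a$, by $b$, by the edge $(a,c)$ (as a child of $c$), by another $d_j$ through a blocking chain, or equals $0$. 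Writing $\alpha^{-}$ and $\alpha^{+}$ for the two endpoint vertex cells (replacing $\alpha_I$ by $a$ or by $c$), I must describe the critical vertices reached by the unique $W$-paths from these endpoints.

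The key observation, already implicit in the proof of Theorem~\ref{farbergeneralized}, is that iterated principal reduction moves each particle monotonically down $T$ toward $0$ while preserving its coordinate index, so that the final rank in $\{0,1,\ldots,n-1\}$ attained by the $k$-th coordinate depends only on the walk-along-$T$ order of $\alpha_k$ relative to the other $\alpha_j$. Consequently, in passing from $\alpha^{-}$ to $\alpha^{+}$, only those particles whose current vertex lies on the directed $ac$-path of $T$ (plus the one at position $I$) change their final ranks, and this rank shift realises precisely the contiguous-position cycle $\sigma_{i,j}$, with $j$ the number of affected particles and $n-i+1$ the smallest altered rank. The invariants $(i,j)$ can be read off directly from the blocking data of $\alpha$: $n-i$ equals the number of $d_k$'s whose gradient paths land strictly below rank $n-i$ (those forced toward the root via chains anchored at $0$), and $j$ equals the number of coordinates feeding into the $ac$-cycle. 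Combined with the uniqueness of $W$-paths, this yields the edge $(\tau,\sigma_{i,j}^{-1}\cdot\tau)$, where $\tau$ encodes the gradient limit of $\alpha^{-}$ via~\eqref{0iden}.

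For the counting, I stratify critical cells by $(i,j)$ and the auxiliary integer $\ell\in\{2,\ldots,i\}$ equal to the number of children of the (fixed) essential vertex $a$ appearing among the ``principal'' vertices $\{b,c,d_{k_1},\ldots,d_{k_{\ell-2}}\}$, with $b$ smallest and $c$ largest in the walk order. The choice of these $\ell$ labelled children of $a$ contributes $\binom{d(a)-1}{\ell}$, which on summing over essential vertices of $T$ produces $m_\ell$; the binomial factor $\binom{i-2}{\ell-2}$ in~\eqref{formsimp} will then arise as the number of admissible ways of distributing the remaining $d_k$'s (those not adjacent to $a$) among the positions compatible with the stratum $(i,j)$. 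Summing over $\ell$ reproduces~\eqref{formsimp} and the disjointness of the strata $E_{i,j}$ follows from the uniqueness of the parameters. Finally, the $\mathcal{S}_n$-equivariance of $W$ noted just before Corollary~\ref{dim}, combined with~\eqref{0iden} (which converts coordinate permutation into right multiplication on $\mathcal{S}_n$), yields the stated action on each $E_{i,j}$.

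The main obstacle will be the general gradient-flow analysis just sketched. For $n=3$ an exhaustive five-case discussion sufficed, but for general $n$ the $d_k$'s can form long blocking chains and several may simultaneously occupy vertices along the $ac$-path, so the contiguous-cycle form of $\sigma_{i,j}$ must be extracted uniformly rather than case-by-case. I expect to do this by induction on $n$, exploiting the compatible subtree inclusions of Remark~\ref{inclusion} to reduce each local blocking configuration to one already treated in a smaller configuration space, and combining this with a direct control of how the root vertex (if present in $\alpha$) decrements $i$ by one per occurrence.
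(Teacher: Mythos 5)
Your overall plan matches the paper's: use the Classification Theorem~\ref{teoremadeclasificacion} to enumerate critical $1$-cells, trace the gradient paths from each endpoint to a critical $0$-cell, and organize the resulting edges by the monodromy data. However there are two genuine gaps.

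First, the gradient-flow computation---the heart of the matter, as you acknowledge by calling it ``the main obstacle''---is not actually carried out. You propose an induction on $n$ via Remark~\ref{inclusion}, but never begin it, and it is not at all clear that subtree inclusions give a clean inductive handle on the $n$-particle gradient limits (Remark~\ref{inclusion} concerns $2$-particle configurations). The paper instead chooses the canonical ordered representative $c_a = ((a_0,a_1),a_2,\ldots,a_n)$ with $a_2<\cdots<a_n$, and then the two gradient limits can be written down explicitly: the initial endpoint flows to $(i,0,1,\ldots,i-1,i+1,\ldots,n-1)$ and the final endpoint to $(i+j-1,0,1,\ldots,i+j-2,i+j,\ldots,n-1)$, which differ precisely by the contiguous cycle $\sigma_{i,j}^{-1}$. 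Your heuristic that ``iterated principal reduction moves each particle monotonically down $T$ toward $0$ while preserving its coordinate index'' and that final rank depends only on walk-order is essentially correct and is what makes this explicit computation work, but you never extract the cycle form from it. Also, your phrasing ``particles whose current vertex lies on the directed $ac$-path of $T$'' is incorrect as stated: the $j-1$ vertices that shift rank are those with walk-order labels strictly between $a_0$ and $a_1$ (stacked on low-label branches of $a_0$), not vertices on the tree path from $a$ to $c$---that path is a single edge. This is more than a terminological slip; if you meant the graph path, the claim about which particles shift rank is simply false.

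Second, the counting is asserted, not proved. You claim the factor $\binom{i-2}{\ell-2}$ arises as ``the number of admissible ways of distributing the remaining $d_k$'s among the positions compatible with the stratum $(i,j)$,'' but that is precisely the thing to be shown, and it is nontrivial: the count must be shown to be independent of $j$. Naively stratifying by the parameter $k$ (the number of stacks carrying the $j-1$ small vertices), one gets a sum $\sum_k \binom{j-2}{k-1}\binom{(\text{level parameter})-j-1}{\ell-k-1}$, and the fact that this collapses to $\binom{i-2}{\ell-2}$ is exactly Vandermonde's identity. The paper isolates this as Lemma~\ref{vandermonde} (distribute $r$ indistinguishable balls into $s$ nonempty distinguishable boxes with a prescribed ``initial segment'' total $f$; the count $\binom{r-2}{s-2}$ is independent of $f$). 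Without that lemma, or an equivalent argument, your counting step is incomplete. Your identification of $\ell$ with ``the number of children of $a$ carrying principal vertices'' is the right stratification parameter, and the $m_\ell = \sum_v\binom{d(v)-1}{\ell}$ factor is correct, so this gap is isolated but real.

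Your reading of the level parameter is also unnecessarily circular: rather than ``the number of $d_k$'s whose gradient paths land strictly below rank $n-i$,'' the clean invariant (and the one used in the paper) is simply the cardinality of the stack of vertices of $c_a$ blocked by the root vertex $0$. Using this from the start would simplify both the gradient computation and the subsequent count.
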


\begin{ejem}\label{ejemplo4t}{\em
$G(4,T)$ is obtained by splicing the graphs depicted in Figures~\ref{46}--\ref{22}, where a solid (dashed/double, respectively) arrow represents $m_2+2m_3+m_4$ ($m_2+m_3$/$m_2$, respectively) edges. As in Figures~\ref{3} and~\ref{d3}, the notation $(i,j,k,\ell)$ is simplified to $ijk\ell$ in Figures~\ref{46}--\ref{22} but, unlike the former figures (where edges are distributed by levels $E_i=\sqcup_j E_{i,j}$), edges are distributed by cycle lengths in the latter figures. (For $n\geq4$, graphs organized by levels are not planar nor as aesthetic as those in Figures~\ref{46}--\ref{22}.) Table~\ref{n4} summarizes the combinatorial structure, with levels $E_i$ corresponding to arrow styles, and where the first column gives the following information (which will the basis for the proof in the general case): From each $\mathcal{S}_4$-orbit of critical edges in $\mathcal{D}^4T$, select the unique representative $\left((a_0,a_1),a_2,a_3,a_4\right)$ satisfying $a_2<a_3<a_4$. The first column of Table~\ref{n4} then lists the conditions this representative must have in order for its $\mathcal{S}_4$-orbit to have the indicated level and cycle structure. Note that if $B$ denotes the stack of vertices $a_i$ blocked by the root vertex, then the level is read off from the cardinality of~$B$, whereas the cycle structure is read off from the cycle structure of the permutation determined by the vertices outside $B$. 
\begin{table}[ht!]
\centering
\begin{tabular}{|c|c|c|c|} \hline
{\footnotesize condition in $\mathcal{D}^4T$} & {\footnotesize $\mathcal{S}_4$-orbit} & {\footnotesize amount of repeated edges} & {\footnotesize level} \\ \hline
{\footnotesize $a_2=0$, $a_3=1$, $a_0<a_4<a_1$} & {\footnotesize 12 cycles of lenght two} \hspace{2mm}& {\footnotesize $m_2$} & {\footnotesize $E_2$} \\ \hline
{\footnotesize $a_2=0,$ $a_0<a_3<a_1<a_4$} & {\footnotesize12 cycles of length two} \hspace{2mm}& {\footnotesize $m_2+m_3$} & {\footnotesize $E_3$} \\
{\footnotesize $a_2=0,$ $a_0<a_3<a_4<a_1$} & {\footnotesize 8 cycles of length three} & {\footnotesize $m_2+m_3$} & {\footnotesize $E_3$}\\ \hline
{\footnotesize $a_0<a_2<a_1<a_3<a_4$} & {\footnotesize 12 cycles of length two} \hspace{2mm}& {\footnotesize $m_2+2m_3+m_4$} & {\footnotesize $E_4$}\\
{\footnotesize $a_0<a_2<a_3<a_1<a_4$} & {\footnotesize 8 cycles of length three} & {\footnotesize $m_2+2m_3+m_4$} & {\footnotesize $E_4$} \\
{\footnotesize $a_0<a_2<a_3<a_4<a_1$} & {\footnotesize 6 cycles of lenghth four} & {\footnotesize $m_2+2m_3+m_4$} & {\footnotesize $E_4$} \\ \hline
\end{tabular}  \caption{Edge types in $G(4,T)$.} \label{n4}
\end{table}
}\end{ejem}
\begin{center}
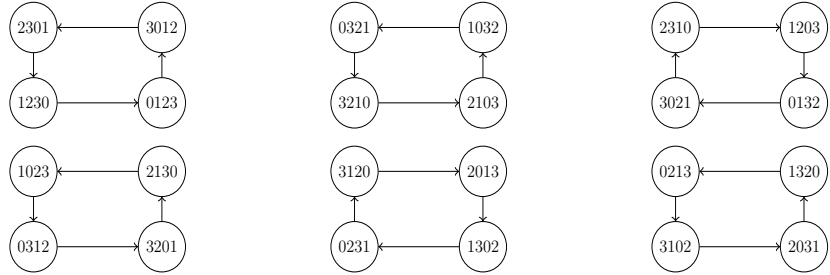
\begin{figure}[ht!]
\centering
\resizebox{11cm}{3.6cm}{
\begin{tikzpicture}[every node/.style={circle, draw, scale=.6}, scale=1.0, rotate = 180, xscale = -1]
\node (1) at ( -4,-.9) {1230};
\node (2) at (-2,-.9)  {0123};
\node (3) at ( -4,-2){2301};
\node (4) at (-2,-2) {3012};

\node (5) at ( -4,.1) {1023};
\node (6) at (-2, .1) {2130};
\node (7) at (-4, 1.2) {0312};
\node (8) at (-2, 1.2) {3201};

\node (9) at ( 1, -.9) {3210}; 
\node (10) at ( 3,-.9) {2103};
\node (11) at (3,-2) {1032};
\node (12) at (1,-2) {0321};

\node (13) at (1,.1)  {3120}; 
\node (14) at (3,.1)  {2013};
\node (15) at (1,1.2)  {0231};
\node (16) at (3,1.2)  {1302};

\node (17) at (6,.1)  {0213};
\node (18) at (8,.1)  {1320};
\node (19) at (6,1.2)  {3102};
\node (20) at (8,1.2)  {2031};

\node (21) at (6,-2) {2310};
\node (22) at (8,-2) {1203};
\node (23) at (6,-.9)  {3021};
\node (24) at (8,-.9)  {0132};

\draw [->] (21) -- (22); 
\draw [->] (23) -- (21);
\draw [->] (22) -- (24);
\draw [->] (24) -- (23);
\draw [->] (18) -- (17);
\draw [->] (17) -- (19);
\draw [->] (20) -- (18);
\draw [->] (19) -- (20);
\draw [->] (13) -- (14);
\draw [->] (15) -- (13);
\draw [->] (14) -- (16);
\draw [->] (16) -- (15);
\draw [->] (1) -- (2); 
\draw [->] (2) -- (4);
\draw [->] (4) -- (3);
\draw [->] (3) -- (1);
\draw [->] (5) -- (7);
\draw [->] (6) -- (5);
\draw [->] (8) -- (6);
\draw [->] (7) -- (8);
\draw [->] (9) -- (10);
\draw [->] (12) -- (9);
\draw [->] (11) -- (12);
\draw [->] (10) -- (11);
\end{tikzpicture}}
\caption{$E_{4,4}$ in $G(4,T)$; here and in the next two figures, arrows represent multiple edges as indicated in Example~\ref{ejemplo4t}.}
\label{46}
\end{figure}
\end{center}
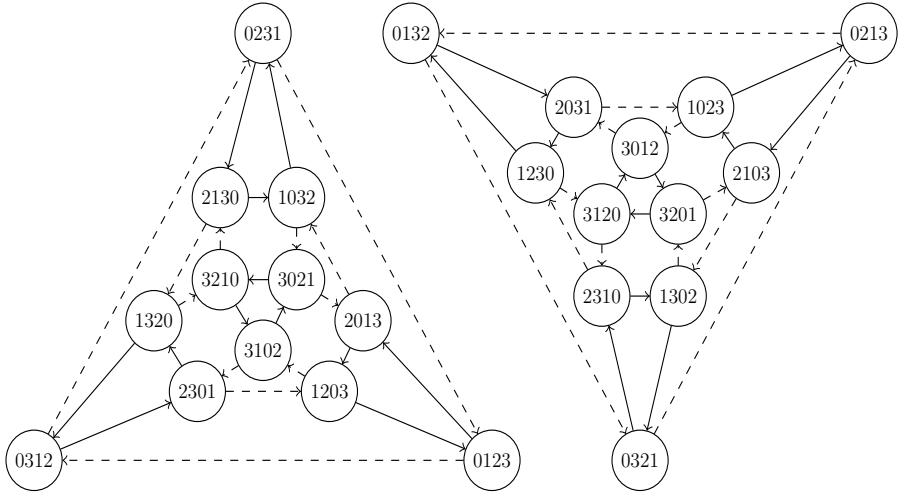
\begin{figure}[ht!]
	\centering
	\resizebox{12cm}{6.5cm}{
	\begin{tikzpicture}[every node/.style={circle, draw, scale=.6}, scale=1.0, rotate = 180, xscale = -1]
		\node (1) at ( 3.66, 3) {0231};
		\node (2) at ( 0.66, 8.2) {0312};
		\node (3) at ( 6.66, 8.2) {0123};
		\node (4) at ( 3.1, 5) {2130};
		\node (5) at ( 4.1, 5) {1032};
		\node (6) at ( 3.1, 6) {3210};
		\node (7) at ( 4.1, 6) {3021};
		\node (8) at ( 4.97, 6.5) {2013};
		\node (9) at ( 4.53, 7.36) {1203};
		\node (10) at ( 2.23, 6.5) {1320};
		\node (11) at ( 2.8, 7.36) {2301};
		\node (12) at ( 3.66, 6.86) {3102};
		\draw[->] (1) -- (4);
		\draw[->] (5) -- (1);
		\draw[->] (4) -- (5);
		\draw[dashed,->] (6) -- (4);
		\draw[dashed,->] (5) -- (7);
		\draw[->] (7) -- (6);
		\draw[->] (6) -- (12);
		\draw[->] (12) -- (7);
		\draw[dashed,->] (4) -- (10);
		\draw[dashed,->] (10) -- (6);
		\draw[dashed,->] (12) -- (11);
		\draw[->] (11) -- (10);
		\draw[->] (10) -- (2);
		\draw [->](2) -- (11);
		\draw[dashed,->] (7) -- (8);
		\draw[dashed,->] (8) -- (5);
		\draw[dashed,->] (9) -- (12);
		\draw[->] (8) -- (9);
		\draw[->] (9) -- (3);
		\draw[->] (3) -- (8);
		\draw[dashed,->] (11) -- (9);
		\draw[dashed,->] (1) -- (3);
		\draw[dashed,->] (3) to (2);
		\draw[dashed,->] (2) to (1);
\node (13) at ( 5.6, 3) {0132};
\node (22) at ( 11.6, 3) {0213};
\node (23) at ( 8.6, 8.2) {0321};
\node (14) at ( 7.73, 3.9) {2031};
\node (25) at ( 9.46, 3.9) {1023};
\node (18) at ( 8.1, 6.2) {2310};
\node (21) at ( 7.23, 4.7) {1230};
\node (20) at ( 10.06, 4.7) {2103};
\node (15) at ( 8.1, 5.2) {3120};
\node (16) at ( 9.1, 5.2) {3201};
\node (19) at ( 9.1, 6.2) {1302};
\node (17) at ( 8.6, 4.4) {3012};
\draw[dashed,->] (22) -- (13);
\draw[dashed,->] (13) -- (23);
\draw[dashed,->] (23) -- (22);
\draw[->] (13) -- (14);
\draw[->] (25) -- (22);
\draw[dashed,->] (14) -- (25);
\draw[->] (21) -- (13);
\draw[->] (14) -- (21);
\draw[->] (22) -- (20);
\draw[->] (20) -- (25);
\draw[->] (23) -- (18);
\draw[dashed,->] (15) -- (18);
\draw[dashed,->] (19) -- (16);
\draw[->] (18) -- (19);
\draw[->] (19) -- (23);
\draw[->] (16) -- (15);
\draw[dashed,->] (21) -- (15);
\draw[dashed,->] (16) -- (20);
\draw[->] (17) -- (16);
\draw[dashed,->] (17) -- (14);
\draw[->] (15) -- (17);
\draw[dashed,->] (25) -- (17);
\draw[dashed,->] (20) -- (19);
\draw[dashed,->] (18) -- (21);
	\end{tikzpicture}}
	\caption{$E_{4,3}$ (solid arrows) and $E_{3,3}$ (dashed arrows) in $G(4,T)$.}
	\label{38}
\end{figure}
\begin{figure}[ht!]
\centering
\resizebox{10.5cm}{9.5cm}{
\begin{tikzpicture}[every node/.style={circle, draw, scale=.6}, scale=1.0, rotate = 180, xscale = -1]
\node (1) at ( 0.5, 0.73) {1320};
\node (2) at ( 1.5, 2.73) {0321};
\node (3) at ( 1, 3.6) {0231};
\node (4) at ( -1.23, 3.6) {1230};
\node (5) at ( 2.5, 2.73) {0312};
\node (6) at ( 3, 3.6) {0213};
\node (7) at ( 2.5, 4.46) {0123};
\node (8) at ( 1.5, 4.46) {0132};
\node (9) at ( 4.5, 2.73) {1302};
\node (10) at ( 4, 3.6) {1203};
\node (11) at ( 2.5, 6.22) {1032};
\node (12) at ( 3, 5.36) {1023};
\node (13) at ( 4.5, 4.46) {2103};
\node (14) at ( 4, 5.36) {2013};
\node (15) at ( 5.5, 2.73) {2301};
\node (16) at ( 6, 3.6) {3201};
\node (17) at ( 5.5, 4.46) {3102};
\node (18) at ( 3, 7.08) {2031};
\node (19) at ( 4, 7.08) {3021};
\node (20) at ( 4.5, 6.22) {3012};
\node (21) at ( 6.5, .73) {2310};
\node (22) at ( 8.23, 3.6) {3210};
\node (23) at ( 2, 9.08) {2130};
\node (24) at ( 5, 9.08) {3120};
\draw[->] (2)  to[bend left=19] (1);
\draw[double,->] (2)  to[bend left=20] (3);
\draw[->] (4)  to[bend left=19] (3);
\draw[double,->] (1)  to[bend left=20] (4);
\draw[dashed,->] (5)  to[bend left=19] (2);
\draw[double,->] (6)  to[bend left=20] (5);
\draw[dashed,->] (7)  to[bend left=19] (6);
\draw[double,->] (7)  to[bend left=20] (8);
\draw[dashed,->] (3)  to[bend left=19] (8);
\draw[->] (9)  to[bend left=19] (5);
\draw[double,->] (9)  to[bend left=20] (10);
\draw[->] (6)  to[bend left=19] (10);
\draw[->] (12)  to[bend left=19] (7);
\draw[double,->] (11)  to[bend left=20] (12);
\draw[->] (8)  to[bend left=19] (11);
\draw[dashed,->] (14)  to[bend left=19] (12);
\draw[->] (13)  to[bend left=19] (14);
\draw[dashed,->] (10)  to[bend left=19] (13);
\draw[dashed,->] (18)  to[bend left=19] (11);
\draw[double,->] (19)  to[bend left=20] (18);
\draw[dashed,->] (20)  to[bend left=19] (19);
\draw[dashed,->] (15)  to[bend left=19] (9);
\draw[double,->] (16)  to[bend left=20] (15);
\draw[dashed,->] (17)  to[bend left=19] (16);
\draw[double,->] (13)  to[bend left=20] (17);
\draw[double,->] (20)  to[bend left=20] (14);
\draw[->] (17)  to[bend left=19] (20);
\draw[->] (15)  to[bend left=19] (21);
\draw[->] (16)  to[bend left=19] (22);
\draw[double,->] (21)  to[bend left=20] (22);
\draw[->] (24)  to[bend left=19] (19);
\draw[->] (23)  to[bend left=19] (18);
\draw[double,->] (24)  to[bend left=20] (23);
\draw[dashed,->] (21)  to[bend left=19] (1);
\draw[dashed,->] (23)  to[bend left=19] (4);
\draw[dashed,->] (24)  to[bend left=19] (22);
\draw[->] (1)  to[bend left=19] (2);
\draw[double,->] (3)  to[bend left=20] (2);
\draw[->] (3)  to[bend left=19] (4);
\draw[double,->] (4)  to[bend left=20] (1);
\draw[dashed,->] (2)  to[bend left=19] (5);
\draw[double,->] (5)  to[bend left=20] (6);
\draw[dashed,->] (6)  to[bend left=19] (7);
\draw[double,->] (8)  to[bend left=20] (7);
\draw[dashed,->] (8)  to[bend left=19] (3);
\draw[->] (5)  to[bend left=19] (9);
\draw[double,->] (10)  to[bend left=20] (9);
\draw[->] (10)  to[bend left=19] (6);
\draw[->] (7)  to[bend left=19] (12);
\draw[double,->] (12)  to[bend left=20] (11);
\draw[->] (11)  to[bend left=19] (8);
\draw[dashed,->] (12)  to[bend left=19] (14);
\draw[->] (14)  to[bend left=19] (13);
\draw[dashed,->] (13)  to[bend left=19] (10);
\draw[dashed,->] (11)  to[bend left=19] (18);
\draw[double,->] (18)  to[bend left=20] (19);
\draw[dashed,->] (19)  to[bend left=19] (20);
\draw[dashed,->] (9)  to[bend left=19] (15);
\draw[double,->] (15)  to[bend left=20] (16);
\draw[dashed,->] (16)  to[bend left=19] (17);
\draw[double,->] (17)  to[bend left=20] (13);
\draw[double,->] (14)  to[bend left=20] (20);
\draw[->] (20)  to[bend left=19] (17);
\draw[->] (21)  to[bend left=19] (15);
\draw[->] (22)  to[bend left=19] (16);
\draw[double,->] (22)  to[bend left=20] (21);
\draw[->] (19)  to[bend left=19] (24);
\draw[->] (18)  to[bend left=19] (23);
\draw[double,->] (23)  to[bend left=20] (24);
\draw[dashed,->] (1)  to[bend left=19] (21);
\draw[dashed,->] (4)  to[bend left=19] (23);
\draw[dashed,->] (22)  to[bend left=19] (24);
\end{tikzpicture}}
\caption{$E_{4,2}$ (solid arrows), $E_{3,2}$ (dashed arrows) and $E_{2,2}$ (double arrows) in $G(4,T)$.}
\label{22}
\end{figure}
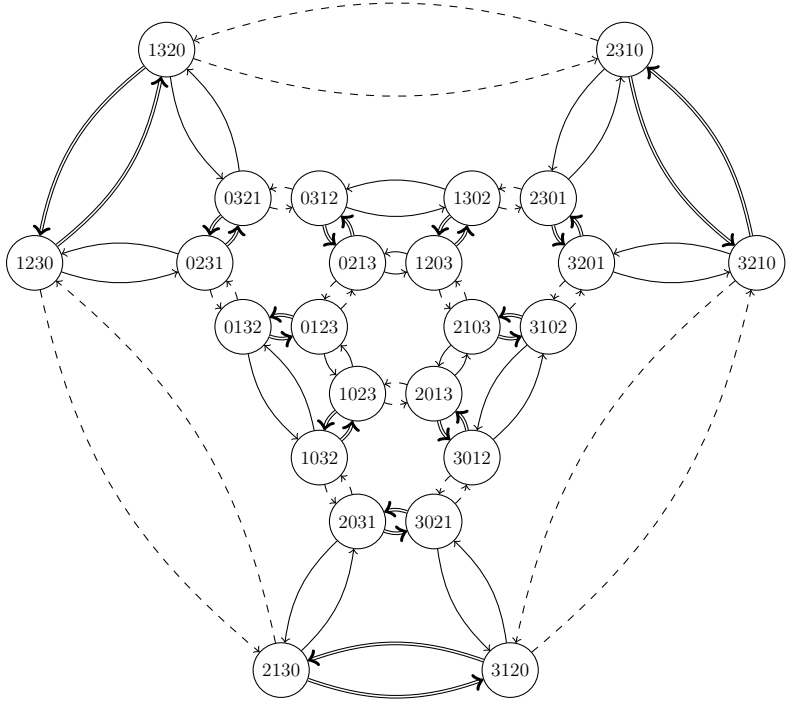

\begin{remark}{\em
As discussed in the proof of Theorem~\ref{n3} (and argued below for the general case), the canonical orientation (indicated in the figures above by the arrow direction) of each edge $(\tau,\sigma_{i,j}^{-1}\cdot\tau)\in E_{i,j}$, i.e., from $\tau$ to $\sigma_{i,j}^{-1}\cdot \tau$, is dictated by the monodromy of the covering space $\mathcal{D}^nT\to U\mathcal{D}^nT$ when each unordered critical 1-cell $\{(a_0,a_1),a_2,\ldots,a_n\}$ of $U\mathcal{D}^nT$ is (also canonically) oriented from $\{a_0,a_2,\ldots,a_n\}$ to $\{a_1,a_2,\ldots,a_n\}$.
}\end{remark}

Theorem~\ref{elcasotegeneral} implies that the $\mathcal{S}_n$-action on $G(n,T)$ restricts to an action on each ```sublevel''~$E_{i,j}$. In fact, as it will be clear from the proof below, each $E_{i,j}$ is a union of $\,\sum_{\ell=2}^i\binom{\,i-2\,}{\,\ell-2\,}m_\ell$ many full $\mathcal{S}_n$-orbits. Further, if we think of any $\mathcal{S}_n$-orbit in $E_{i,j}$ as a subgraph of $G(n,T)$, then its components are $j$-cycles in one-to-one correspondence with the quotient set of $\mathcal{S}_n$ by the subgroup generated by $\sigma_{i,j}$. In particular, each $\mathcal{S}_n$-orbit in $E_{i,j}$ is a set of $n!$ edges forming $\frac{n!}{j}$ pairwise disjoint $j$-cycles. On the other hand, while $G(n,T)$ has multiple edges, repetitions happen only within a single sublevel, and precisely as prescribed in Theorem~\ref{elcasotegeneral}. Indeed, as the reader can easily check, a pair of edges of $G(n,T)$ of respective types $(\tau_1,\sigma_{i,j}\cdot\tau_1)$ and $(\tau_2,\sigma_{r,s}\cdot\tau_2)$ can share end points  (i.e.~$\{\tau_1,\sigma_{r,s}\cdot\tau_1\}=\{\tau_2,\sigma_{r,s}\cdot\tau_2\}$) only if $\tau_1=\tau_2$, $i=r$ and $j=s$. Thus:

\begin{coro}\label{elcasotegeneralversionviejaahora coro}
\begin{enumerate}
\item[(1)] Multiple edges in $G(n,T)$ belong to a common sublevel $E_{i,j}$.
\item[(2)] Each $E_{i,j}$ contains $\sum_{\ell=2}^i \binom{\,i-2\,}{\,\ell-2\,}m_\ell$ edges with the same endpoints, for every distinct pair of endpoints of edges in $E_{i,j}$.
\item[(3)] If two different $\mathcal{S}_n$-orbits in $G(n,T)$ both form $\frac{n!}{\ell}$ disjoint $\ell$-cycles, then either they belong to different level sets $E_i$, or they belong to the same sublevel $E_{i,j}$ (in which case they are repeated $\mathcal{S}_n$-orbits).
\end{enumerate} 
\end{coro}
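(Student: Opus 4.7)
The plan is to deduce all three items from the explicit parametrization of edges given by Theorem~\ref{elcasotegeneral} together with a short injectivity lemma establishing that the unordered endpoint pair $\{\tau,\sigma_{i,j}^{-1}\tau\}$ of an edge in $G(n,T)$ determines the indices $(i,j)$ unambiguously. Once this lemma is in hand, the disjoint-union decomposition $E(G(n,T))=\bigsqcup_{2\leq j\leq i\leq n} E_{i,j}$ together with the multiplicity count in Theorem~\ref{elcasotegeneral} makes items (1) and (2) essentially immediate, while (3) follows from the $\mathcal{S}_n$-orbit analysis of each sublevel.

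For the lemma I would argue as follows. Suppose $\{\tau_1,\sigma_{i,j}^{-1}\tau_1\}=\{\tau_2,\sigma_{r,s}^{-1}\tau_2\}$ as unordered pairs. Since each of these sets has two distinct elements (neither $\sigma_{i,j}$ nor $\sigma_{r,s}$ is the identity), this equality unfolds into two cases which, in either event, yield $\sigma_{i,j}=\sigma_{r,s}$ or $\sigma_{r,s}=\sigma_{i,j}^{-1}$. By construction $\sigma_{i,j}$ is the $j$-cycle supported on the consecutive block $\{n-i+1,\ldots,n-i+j\}$, so $j$ is recovered as the length of its support and $i$ from the relation $n-i+1=\min(\mathrm{supp}\,\sigma_{i,j})$; thus $\sigma_{i,j}=\sigma_{r,s}$ forces $(i,j)=(r,s)$. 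For the alternative $\sigma_{r,s}=\sigma_{i,j}^{-1}$, observe that for $j\geq 3$ the inverse is the ``descending'' cycle $n-i+1\to n-i+j\to\cdots\to n-i+2\to n-i+1$, which is never of the ascending form defining any $\sigma_{r,s}$, so $j=s=2$ is forced; then $\sigma_{i,2}^{-1}=\sigma_{i,2}$ and the support argument again yields $i=r$.

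With the lemma established, item (1) follows at once: two edges $(\tau_1,\sigma_{i,j}^{-1}\tau_1)$ and $(\tau_2,\sigma_{r,s}^{-1}\tau_2)$ sharing an endpoint pair must satisfy $(i,j)=(r,s)$, and hence lie in the same sublevel $E_{i,j}$. Item (2) is then a direct repackaging of the multiplicity statement in Theorem~\ref{elcasotegeneral}, since (1) rules out contributions from other sublevels to the given endpoint pair, and within $E_{i,j}$ the theorem prescribes exactly $\sum_{\ell=2}^{i}\binom{i-2}{\ell-2}m_\ell$ parallel copies. For item (3), the $\mathcal{S}_n$-orbit of an edge $(\tau,\sigma_{i,j}^{-1}\tau)\in E_{i,j}$ is the set $\{(\tau\mu,\sigma_{i,j}^{-1}\tau\mu):\mu\in\mathcal{S}_n\}$, which, as a subgraph of $G(n,T)$, organizes itself as $n!/j$ pairwise disjoint $j$-cycles obtained by iterating the left-multiplication map $\rho\mapsto\sigma_{i,j}^{-1}\rho$. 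Consequently, two distinct orbits that both decompose into $n!/\ell$ disjoint $\ell$-cycles must have $j=s=\ell$, and then the injectivity lemma places them either in distinct levels $E_i$ (when $i\neq r$), or in the common sublevel $E_{i,j}$ as two of the $\sum_{\ell=2}^{i}\binom{i-2}{\ell-2}m_\ell$ repeated orbits promised by Theorem~\ref{elcasotegeneral}.

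The main obstacle I anticipate lies in the $j=2$ boundary case of the lemma, where $\sigma_{i,2}$ is self-inverse and the second case above is not vacuous; one must verify that even there the support-based recovery of $i$ still goes through, ruling out phantom identifications across different sublevels $E_{i,2}$ that might arise from the common structural feature of being generated by adjacent transpositions.
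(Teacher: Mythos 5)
Your proposal is correct and coincides with the paper's approach: the corollary follows from Theorem~\ref{elcasotegeneral} together with the injectivity claim that sharing endpoints forces $(i,j)=(r,s)$, which the paper leaves as an ``easily checked'' exercise in the paragraph preceding the statement and which your lemma unpacks in detail. Your Case B analysis is in fact slightly more careful than the paper's prose (which asserts $\tau_1=\tau_2$ as part of the conclusion, something that can fail when $j=s=2$, e.g.~for $\tau_2=\sigma_{i,2}\tau_1$), but the part that the corollary actually needs, namely $(i,j)=(r,s)$, holds in every case of your analysis.
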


\begin{remark}\label{cantidadtotaldearistas}{\em
Theorem~\ref{elcasotegeneral} implies that the total number of edges in the graph $G(n,T)$ is 
$$n! \sum_{2\leq\ell\leq i\leq n} (i-1) \binom{\,i-2\,}{\,\ell-2\,}m_\ell.$$ 
A reader familiar with~\cite{fs} will note that the above formula (ignoring the factor $n!$, in view of Corollary~\ref{dim}.2) gives a simple expression for its unordered analogue, i.e., Theorem~4.1 in \cite{fs} in the case of trees. Here we remark that Farley-Sabalka's counting is done by subtracting a well controlled subset from the set of ways to distribute indistinguishable balls into distinguishable boxes when boxes are allowed to receive no balls. In contrast, our approach makes a direct counting based on what we believe is a new combinatorial formula: the number of ways to distribute indistinguishable balls into distinguishable boxes under special restrictions (one being that no box is allowed to receive zero balls). The combinatorial fact behind our formula is Vandermonde's identity. See Lemma~\ref{vandermonde}.
}\end{remark}

\begin{ejem}\label{siempresi}{\em
If $T$ is an ($n$-sufficiently subdivided) radial tree with essential vertex of degree~$d$ (so that $m_\ell=\binom{d-1}{\ell}$), then $G(n,T)$ is a connected graph with $n!$ vertices and $$n!\sum_{2\leq \ell\leq i\leq n}(i-1)\binom{i-2}{\ell-2}\binom{d-1}{\ell}$$ edges.
}\end{ejem}

\begin{proof}[Proof of Theorem~\ref{elcasotegeneral}]
In what follows $a=\{(a_0,a_1), a_2, \dots a_n\}$ stands for a critical unordered 1-cell of $U\mathcal{D}^nT$, and we let $c_a:=((a_0,a_1), a_2, \ldots, a_n)$ stand for the ordered representative of $a$ having
\begin{equation}\label{enorden}
a_2 < a_3 < \cdots < a_n.
\end{equation}

Let $E_{n-i}$ stand for the set of edges $e(c)\in E(G(n,T))$ coming from a critical 1-cell $c$ of $(\mathcal{D}^nT)'_1$ whose stack of vertices blocked by the root vertex $0$ has cardinality $i$. It is obvious that the $\mathcal{S}_n $-action~(\ref{raccion}) is closed on (the union of the cells in) each $E_{n-i}$, while the Classification Theorem~\ref{teoremadeclasificacion} implies that $E_{n-i}$ is nonempty only for $0\leq i\leq n-2$. Further, in terms of $c_a$ representatives, the condition $e(c_a)\in E_{n-i}$ (and therefore $e(c_a)\cdot\sigma\in E_{n-i}$, for all $\sigma\in\mathcal{S}_n$) is equivalent to the condition that
\begin{equation}\label{c1d2}
\mbox{the equality $a_\ell=\ell-2$ holds for $2\leq \ell\leq i+1$ but fails for $\ell=i+2$.}
\end{equation}

Let $E_{n-i,j}$ be the set of edges in $E_{n-i}$ of the form $e(c_a)\cdot\sigma$ with $\sigma\in\mathcal{S}_n$ and  
\begin{equation}\label{c2d2}
a_0 < \underbrace{a_{i+2} < a_{i+3} < \cdots < a_{i+j}} < a_1 < \underbrace{a_{i+j+1} < a_{i+j+2} < \cdots < a_n}.
\end{equation}
It is again obvious that each $E_{n-i,j}$ is a union of $\mathcal{S}_n$-orbits (which we count below), and that the Classification Theorem~\ref{teoremadeclasificacion} implies that $E_{n-i,j}$ is nonempty only for $2\leq j \leq n-i$ (i.e., the first underbraced sequence of inequalities above cannot be empty, but the second one can).

Using the notation introduced in the proof of Theorem~\ref{n3}, it is clear that, for $e(c_a)\in E_{i,j}$, the initial and final endpoints of $c_a$ (namely $(a_0,a_2,a_3,\ldots,a_n)$ and $(a_1,a_2,a_3,\ldots,a_n)$, respectively), support unique gradient paths
\begin{align}
(a_0,a_2,a_3,\ldots,a_n)\stackrel{\text{\tiny grad}}{\longrightarrow}&(i,0,1, \dots , i-1, i+1, \dots, n-1)\label{inicio}\\
(a_1,a_2,a_3,\ldots,a_n)\stackrel{\text{\tiny grad}}{\longrightarrow}&(i+j-1,0,1,\dots i-1,i,i+1,\ldots,i+j-2,i+j,i+j+1,\ldots,n-1)\nonumber\\
&=(i+j-1,0,1,\ldots, i+j-2,i+j,i+j+1,\ldots,n-1).\label{final}
\end{align}
In terms of~(\ref{0iden}),~(\ref{inicio}) corresponds to the cycle
$$
\tau_0=\left[\hspace{.3mm}1\to i+1\to i\to i-1\to \cdots\to 3\to 2\to 1\,\rule{0mm}{4mm}\right],
$$
while~(\ref{final}) corresponds to the cycle
$$
\tau_1=\left[\hspace{.3mm}1\to i+j\to i+j-1\to \cdots\to 3\to 2\to 1\,\rule{0mm}{4mm}\right].
$$
An elementary calculation gives $\tau_1=\sigma_{n-i,j}^{-1}\tau_0$, so that $e(c_a)$ has the asserted form. The corresponding fact for the $\mathcal{S}_n$-orbit generated by $e(c_a)$ follows from $\mathcal{S}_n$-equivariance.

\medskip
It remains to count the repeated edges in a given $E_{n-i,j}$, i.e., of $\mathcal{S}_n$-orbits making up $E_{n-i,j}$. We do this by counting the critical 1-cells $c_a$ with $e(c_a)\in E_{n-i,j}$, i.e., satisfying~(\ref{c1d2}) and~(\ref{c2d2}). With this in mind, we next spell out the structure (coming from the Classification Theorem~\ref{teoremadeclasificacion}) of critical 1-cells. In what follows $c_a$ stands for a critical 1-cells in $E_{n-i,j}$ (thus satisfying~(\ref{enorden}),~(\ref{c1d2}) and~(\ref{c2d2})). 

First of all,~(\ref{c1d2}) says that, no matter what $c_a$ is, the $i$ vertices $a_2,a_3,\ldots,a_{i+1}$ must be piled up\footnote{Here and below, piling up is done according to~(\ref{enorden}), without leaving empty spots in the tree (as in~(\ref{c1d2})).} in a stack\footnote{Stacks like these are uniquely determined, as the tree has been assumed to be $n$-sufficiently subdivided.} blocked by the root vertex 0. Diversity comes only from the edge $(a_0,a_1)$ ---$a_0$ must be an essential vertex--- and the vertices
\begin{equation}\label{lacola}
a_{i+2},a_{i+3},\ldots,a_n
\end{equation}
satisfying~(\ref{c2d2}). The first $j-1$ vertices in~(\ref{lacola}) are smaller than $a_1$, and are piled up in, say, $p$ stacks ($p\geq1$) each of which is blocked by $a_0$. For the remaining vertices, i.e., the last $n-i-j$ vertices in~(\ref{lacola}), we have:
\begin{enumerate}[(i)]
\item all of them are  larger than $a_1$;
\item the first $y_0$ of them form a stack blocked by $a_1$ (here $y_0\geq0$);
\item the remaining ones (if any) are piled up in, say, $q$ stacks ($q\geq0$) each blocked by $a_0$.
\end{enumerate}
For these conditions to actually determine the cell $c_a$, we must specify:
\begin{enumerate}[(i)]\addtocounter{enumi}{3}
\item the labels of the edges adjacent to $a_0$ that support the piles, and
\item the distribution ---subject to the conditions above--- of vertices in piles.
\end{enumerate}

In the description above, the total number of $a_0$-local $T$-branches\footnote{An $a_0$-local $T$-branch is a sequence of edges $(a_0,b), (b,c), (c,d),\ldots$ (this implies $a_0<b<c<d<\cdots$).} carrying entries of $c_a$ is $p+q+1$ (the final ``${}+1$'' accounts for the branch determined by the edge $(a_0,a_1)$, independently of whether $y_0=0$). So, the number of different \emph{types} of critical 1-cells $c_a$ ---i.e., families of such 1-cells sharing a common information in (i)--(iv) above--- that involve the essential vertex $a_0$ is
\begin{equation}\label{parrafo}
\binom{d(a_0)-1}{\,p+q+1}.
\end{equation}
In particular, the number of different types of such critical 1-cells $c_a$ is $m_{p+q+1}$. To finish, we would need to count the number of critical 1-cells $c_a$ of a given type (i.e.~accounting for the information in~(v)). However, the combined counting is presented best (and yields the simple formula~(\ref{formsimp})) by taking a slightly different viewpoint of the description above of $c_a$, namely, by stressing on the role of the sum $p+q+1$, rather than on the role of $p$ and $q$ independently (note that  $p+q+1\in\{2,\ldots,n-i\}$). Such an approach is motivated by Lemma~\ref{vandermonde} below, a combinatorial fact that, to the best of our knowledge, has not appeared in print before. We thus make a brief detour in the course of the proof of Theorem~\ref{elcasotegeneral}.

\begin{lema}\label{vandermonde}
Fix integers $f$, $r$ and $s$ satisfying $r\geq s\geq2$ and $r>f>0$. Let $N(f,r,s)$ denote the number of ways to distribute $r$ indistinguishable balls into $s$ distinguishable boxes (say boxes are numbered from 1 to $s$) so that
\begin{itemize}
\item no box is empty, and 
\item the number of balls in some initial segment of boxes is $f$.
\end{itemize}
Then $N(f,r,s)=\binom{r-2}{s-2}$. (Note that $N(f,r,s)$ is thus independent of $f$.)
\end{lema}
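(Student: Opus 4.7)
The plan is to exhibit $N(f,r,s)$ as a count of compositions satisfying a single fixed constraint, using the classical stars-and-bars model. Represent a distribution of $r$ indistinguishable balls into $s$ distinguishable boxes, with no box empty, by a row of $r$ stars together with a choice of $s-1$ of the $r-1$ internal gaps in which to insert dividing bars. Under this bijection, the total number of balls in the first $k$ boxes equals the position (counted from the left) of the $k$-th bar.

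The key observation I would use is that, because every box is nonempty, the partial sums $x_1,x_1+x_2,\ldots,x_1+\cdots+x_{s-1}$ are strictly increasing, hence pairwise distinct. Therefore the condition ``some initial segment of boxes contains exactly $f$ balls'' translates into the condition ``exactly one of the $s-1$ chosen bar positions is equal to $f$''. Given the constraint $r>f>0$, the position $f$ is an admissible internal gap.

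Counting the configurations satisfying this constraint is then immediate: place one bar at position $f$ and choose the remaining $s-2$ bars from the other $r-2$ admissible positions, obtaining
\begin{equation*}
N(f,r,s)=\binom{r-2}{s-2}.
\end{equation*}
In particular, the count is independent of the value of $f$, as asserted.

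To make the link to Vandermonde's identity explicit, as alluded to in Remark~\ref{cantidadtotaldearistas}, I would also present a second derivation by conditioning on the length $k$ of the distinguished initial segment. Since no box is empty, $k$ ranges over $\{1,\ldots,s-1\}$; the first $k$ boxes carry $f$ balls in $\binom{f-1}{k-1}$ ways, and the remaining $s-k$ boxes carry $r-f$ balls in $\binom{r-f-1}{s-k-1}$ ways. Summing and reindexing with $j=k-1$ yields
\begin{equation*}
N(f,r,s)=\sum_{j=0}^{s-2}\binom{f-1}{j}\binom{r-f-1}{s-2-j}=\binom{(f-1)+(r-f-1)}{s-2}=\binom{r-2}{s-2},
\end{equation*}
where the middle equality is Vandermonde. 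No step here presents a genuine obstacle; the only subtlety is checking that the uniqueness of the index $k$ (forced by strict positivity of the parts) matches the wording of ``some initial segment'', so that each valid composition is counted exactly once.
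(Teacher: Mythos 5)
Your proof is correct, and you actually give two arguments. The second one (conditioning on the length $k$ of the distinguished initial segment and summing $\binom{f-1}{k-1}\binom{r-f-1}{s-k-1}$, then invoking Vandermonde) is exactly the paper's proof. Your first argument, however, is a genuinely different and more direct route: it encodes a composition of $r$ into $s$ positive parts as a choice of $s-1$ bars among $r-1$ internal gaps in the stars-and-bars picture, observes that the bar positions are precisely the (pairwise distinct, by positivity) partial sums, translates the constraint into ``one bar sits at position $f$'', and concludes by choosing the remaining $s-2$ bars from the other $r-2$ gaps. This bijective argument avoids Vandermonde's identity entirely and makes the independence of $N(f,r,s)$ from $f$ transparent at a glance, rather than as an afterthought of a closed-form summation; the paper's decomposition buys a clean conceptual link to Vandermonde (which the authors explicitly wished to highlight in Remark~\ref{cantidadtotaldearistas}), whereas yours buys brevity and a self-contained combinatorial explanation. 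The only point worth being a bit careful about---and you already flag it---is that strict positivity of the parts is what forces the partial sums to be distinct, so that ``some initial segment has $f$ balls'' is equivalent to ``exactly one bar is at position $f$'' and no configuration is double-counted.
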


By the expression ``initial segment of boxes'' in the statement of Lemma~\ref{vandermonde}, we mean the collection of balls in boxes 1 through $k$, for some $k\in\{1,\ldots,s-1\}$ ($k$ cannot be $s$, as $r>f$).

\begin{ejem}{\em
For $r=6$, $s=3$ and $f=4$, the distribution that has $3$ balls in the first box, $2$ balls in the second box, and $1$ ball in the last box is ``illegal'' (the second item in Lemma~\ref{vandermonde} is not fulfilled) and, thus, not accounted for $N(4,6,3)$. Instead, the distribution that has 4 balls in the first box, 1 ball in the second box, and 1 ball in the third box is allowable and is to be accounted for in $N(4,6,3)$. Likewise, the distribution that has 2 balls in each box is allowable.

}\end{ejem}

\begin{remark}\label{compa-racion}{\em
Lemma~\ref{vandermonde} should be compared with the following standard fact: \emph{The binomial coefficient $\binom{r-1}{s-1}$ counts the number of ways to distribute $r$ indistinguishable balls into $s$ distinguishable boxes in such a way that (boxes are allowed to have more than one element, but) no box can be empty.}
}\end{remark}

\begin{proof}[Proof of Lemma~\ref{vandermonde}]
An allowable distribution has a total of $f$ balls in the first $k$ boxes, for some $1\leq k\leq s-1$. Taking into account the formula in Remark~\ref{compa-racion}, this means that 
$$
N(f,r,s)=\sum_{k=1}^{s-1}\binom{f-1}{k-1}\binom{r-f-1}{s-k-1}=\binom{r-2}{s-2},
$$
where the last equality is Vandermonde's identity.
\end{proof}

\noindent{\it Proof conclusion of Theorem~\ref{elcasotegeneral}.} Fix $\ell\in\{2,\ldots,n-i\}$. Consider critical 1-cells $c_a$ whose entries occupy a total of $\ell$ $a_0$-local $T$-branches (i.e., $\ell=p+q+1$ in the discussion above). As discussed in the paragraph containing~(\ref{parrafo}), there are $m_\ell$ types of such cells, and we only need to prove that each such type encompasses
\begin{equation}\label{tagging}
\binom{n-i-2}{\ell-2}
\end{equation}
cells. Recall that each of such cells is determined by the distribution (piling up) of its vertices on the (already) selected $a_0$-local $T$-branches. In the distribution process, vertices (i.e., balls in Lemma~\ref{vandermonde}) behave as being indistinguishable, for the distribution has to be done according to~$(\ref{enorden})$. On the other hand, the $\ell$ (i.e., $s$ in Lemma~\ref{vandermonde}) $a_0$-local $T$-branches (i.e., boxes in Lemma~\ref{vandermonde}) receiving vertices are distinguishable, since this determines the actual critical 1-cell. There are $n-i-1$ (i.e., $r$ in Lemma~\ref{vandermonde}) vertices to distribute, namely those in~(\ref{lacola}). However, a characteristic that does not seem to fit in the situation of Lemma~\ref{vandermonde} is the fact that, in the distribution, one and only one of the branches ---the one determined by the edge $(a_0,a_1)$--- is allowed to be empty, for the parameter $y_0$ in the first part of the proof is allowed to be zero. Nevertheless  $a_1$ is also a vertex to be distributed (together with its corresponding edge $(a_0,a_1)$), which fixes the possible emptiness of that box, except that we would need to flag the branch where this holds, for $a_1$ is not ``canonically'' ordered in~(\ref{c2d2}). But the flagging is forced on us (by the parameter $j$): as discussed in the first part of the proof, the first $j-1$ vertices in~(\ref{lacola}) have to fill up an initial segment of stacks, after which, the next branch has to be the flagged one, i.e., the one carrying the edge $(a_0,a_1)$ (and possibly some other additional vertices). The situation now fits perfectly Lemma~\ref{vandermonde}: there are $n-i$ indistinguisable balls (i.e., $a_1$ has now been added) to distribute in $\ell$ distinguishable boxes (none of which can end up being empty, by our setup), with $j-1$ (i.e., $f$ in Lemma~\ref{vandermonde}) of the balls filling up an initial segment of boxes. The desired number~(\ref{tagging}) follows then directly from Lemma~\ref{vandermonde}.
\end{proof}

\section{The higher topological complexity of $\mathcal{C}^nT$}
Throughout this section $n$ stands for an integer greater than 1, and $T$ stands for a tree with at least one essential vertex. Theorem~\ref{1}, Corollary~\ref{dim} and Remark~\ref{hshiausy} show that the configuration space $\mathcal{C}^nT$ has the homotopy type of a cell complex of dimension $\ell=\ell(n,T)=\min\{\left[\frac{n}{2}\right], m\}$, where $m=m(T)$ stands for the number of essential vertices of $T$. In particular, the homotopy dimension of $\mathcal{C}^nT$ satisfies
\begin{equation}\label{porarriba}
\hdim(\mathcal{C}^nT)\leq\ell,
\end{equation}
so that Proposition~\ref{ulbTCnrararar} yields
\begin{equation}\label{iiddhgsusi}
\cat(\mathcal{C}^nT)\leq\ell\quad\mbox{and}\quad \TC_s({\mathcal{C}^nT})\leq s\ell\quad\mbox{for $s\geq2$.}
\end{equation}
This section's goal is to show that all inequalities in~(\ref{porarriba}) and~(\ref{iiddhgsusi}) are in fact equalities, with the single exception of the second inequality in~(\ref{iiddhgsusi}) when $T$ is the $Y$-graph and $n=2$ ---for which $\mathcal{C}^2Y\simeq S^1$ and $\TC_s(\mathcal{C}^2Y)=s-1$. 

\begin{remark}\label{casoexcepcional}{\em
The exceptional case above is reminiscent of the more general formula
$$
\TC_s\left(\bigvee_NS^1\right)=\begin{cases}
s-1, & \text{if $N=1$;}\\ s, & \text{if $N>1$,}
\end{cases}
$$
which holds for $s\geq2$ (see~\cite[Corollary~1.4]{MR3117387}). Indeed, Theorem~\ref{1}, Theorem~\ref{deformacionfuerteespecial}.1, Remark~\ref{hshiausy} and Example~\ref{siempresi} imply that, if $T$ is a radial tree (i.e.~$m=1$), then $\mathcal{C}^nT$ has the homotopy type of a wedge of
\begin{equation}\label{exceptional1}
1+n!\left(-1+\sum_{2\leq j\leq i\leq n}(i-1)\binom{i-2}{j-2}\binom{d-1}{j}\right)
\end{equation}
circles, where $d$ is the degree of the only essential vertex of $\hspace{.2mm}T$. It is then elementary to see that~(\ref{exceptional1}) is a positive integer that equals 1 precisely for $n=2$ and $d=3$. Likewise, by Theorem~\ref{farbergeneralized}, $\mathcal{C}^2T$ has the homotopy type of a wedge of 
\begin{equation}\label{exceptional2}
2\sum_{d(v)\geq3}\binom{d(v)-1}{2}-1
\end{equation}
circles, with~(\ref{exceptional2}) being 1 precisely when $T$ is the $Y$-graph. Note further that Theorem~\ref{1}, Corollary~\ref{dim} and Remarks~\ref{hshiausy} and~\ref{cantidadtotaldearistas} imply that $\mathcal{C}^3T$ is homotopy equivalent to a wedge of 
\begin{equation}\label{noexcepcional}
1+6\left(-1+\sum_{\substack{v\text{ essential}\\2\leq j\leq i\leq 3}}(i-1)\binom{i-2}{j-2}\binom{d(v)-1}{j}\right)
\end{equation}
circles. Since~(\ref{noexcepcional}) is at least 7, we get equality in all instances of~(\ref{porarriba}) and~(\ref{iiddhgsusi}) with $n=3$.
}\end{remark}

We start with a standard algebraic-topology consequence of Theorem~\ref{1} and Remark~\ref{inclusion}.
\begin{coro}\label{vertices}
Let $v_{1}, \ldots, v_{m}$ be the essential vertices of $T$. For each $i= 1, \ldots, m$ let $Y_i$ be a $Y$-graph embedded in $T$ around a small neighborhood of $v_{i}$ (so that $Y_i\cap Y_j=\varnothing$ for $i\neq j$). Then there are cohomology classes $\alpha_{i}\in H^{1}(\mathcal{C}^2T)$ such that $\alpha_{i}|_{\mathcal{C}^2Y_j}=\delta_{ij}\in H^{1}(\mathcal{C}^2Y_j)=\mathbb{Z}$, where $\delta_{ij}$ is the Kronecker delta.
\end{coro}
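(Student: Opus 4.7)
The plan is to exploit Theorem~\ref{farbergeneralized} and Remark~\ref{inclusion} to turn the statement into a computation in the cellular cohomology of a banana graph, where the desired classes can be written down by inspection.

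First I would identify $\mathcal{C}^2T$ with its homotopy model. By Theorem~\ref{1} and Theorem~\ref{farbergeneralized}, $\mathcal{C}^2T$ is homotopy equivalent to the banana graph $B:=B_{2m_2}$, whose two vertices I denote by $P=(0,1)$ and $Q=(1,0)$. Orient every edge of $B$ from $P$ to $Q$. Since each $Y_i$ is a radial tree with one essential vertex of degree $3$, we have $m_2(Y_i)=\binom{2}{2}=1$, so $\mathcal{C}^2Y_i\simeq B_2\simeq S^1$. By Remark~\ref{inclusion}, with compatible orderings chosen around each $v_i$, the inclusion $\mathcal{C}^2Y_i\hookrightarrow\mathcal{C}^2T$ is homotopic to a subgraph inclusion $B_2\hookrightarrow B$ which realizes a specific pair $\{f_i,g_i\}$ of edges in $B$ as the two critical edges determined by $v_i$ (cf.\ Remark~\ref{clearremark}). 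Since the $Y_i$ are chosen in disjoint neighborhoods of distinct essential vertices, and since the counting in the proof of Theorem~\ref{farbergeneralized} shows that the $2m_2$ edges of $B$ are partitioned according to the essential vertex of $T$ at which the critical edge is based, the $2m$ edges $\{f_1,g_1,\ldots,f_m,g_m\}$ are pairwise distinct.

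Next I would compute in cellular cohomology. Since $B$ is $1$-dimensional, $\delta\colon C^1(B)\to C^2(B)=0$ is the zero map, so every $1$-cochain is a cocycle, and $H^1(B)=C^1(B)/\delta C^0(B)$. For each $i\in\{1,\ldots,m\}$, define $\widetilde{\alpha}_i\in C^1(B)$ by
\[
\widetilde{\alpha}_i(e)=\begin{cases}1,&e=f_i,\\0,&\text{otherwise,}\end{cases}
\]
and let $\alpha_i\in H^1(\mathcal{C}^2T)=H^1(B)$ be its cohomology class. The restriction $\alpha_i|_{\mathcal{C}^2Y_j}\in H^1(B_2)=\mathbb{Z}$ is then represented by the restriction of $\widetilde{\alpha}_i$ to the subgraph $\{f_j,g_j\}\subset B$. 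Choosing the canonical generator of $H_1(B_2)=\mathbb{Z}$ to be the cycle $f_j-g_j$ (oriented from $P$ to $Q$ along $f_j$, then back along $g_j$), the Kronecker pairing gives
\[
\bigl\langle\alpha_i|_{\mathcal{C}^2Y_j},[f_j-g_j]\bigr\rangle=\widetilde{\alpha}_i(f_j)-\widetilde{\alpha}_i(g_j)=\delta_{ij},
\]
where the last equality uses that $f_i,g_i,f_j,g_j$ are distinct for $i\neq j$, and that $\widetilde{\alpha}_i(g_i)=0$. Hence $\alpha_i|_{\mathcal{C}^2Y_j}=\delta_{ij}$ as required.

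The only delicate point is ensuring that (i) the orientations of the generators of $H_1(\mathcal{C}^2Y_j)$ used to identify it with $\mathbb{Z}$ are consistent with those coming from the inclusion into $B$, and (ii) the two critical edges attached to distinct essential vertices really land in disjoint pairs. Both are taken care of by Remark~\ref{inclusion} together with the per-vertex counting $\tfrac{(d(v)-1)(d(v)-2)}{2}$ in the proof of Theorem~\ref{farbergeneralized}; once these identifications are fixed, the cocycle $\widetilde\alpha_i$ is automatic and the main obstacle dissolves.
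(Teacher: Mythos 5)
Your proof is correct and uses precisely the two ingredients the paper points to: the banana-graph model from Theorem~\ref{farbergeneralized} (via Remark~\ref{inclusion}) to reduce the statement to a cellular cochain computation on $B_{2m_2}$, and the per-essential-vertex counting in the proof of Theorem~\ref{farbergeneralized} to see that the circles $\mathcal{C}^2Y_j\simeq B_2$ hit pairwise-disjoint $P_\sigma$-pairs of edges. The paper offers no explicit proof of this corollary (it is labeled a ``standard algebraic-topology consequence'' of Theorem~\ref{1} and Remark~\ref{inclusion}), and your cochain-level argument supplies exactly the details the paper leaves implicit.
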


For $i=1,\ldots, \ell$, consider the strictly commutative diagram
\begin{equation}\label{classes}
\xymatrix{\prod\limits_{j=1}^{\ell} \mathcal{C}^2Y_j\ar[r]^(.55){\Psi}\ar[d]_(.6){\mathrm{pr}_{i}} & \mathcal{C}^{2\ell}T\ar[d]^{\Phi_i}\\ \mathcal{C}^2Y_i\ar@{^(->}[r]& \mathcal{C}^2T,} 
\end{equation} 
where $\Psi((x_1,x_2), \ldots, (x_{2\ell-1},x_{2\ell}))=(x_1,x_2, \ldots, x_{2\ell-1},x_{2\ell})$, $\Phi_{i}(x_1, \ldots, x_{2\ell})=(x_{2i-1},x_{2i})$, and $\mathrm{pr}_{i}$ is the projection onto the $i$-th factor. Consider the cohomology classes $\alpha_{ij}\in H^{1}(\mathcal{C}^{2\ell}T)$ defined by $$\alpha_{ij}=\Phi_{i}^{*}(\alpha_{j})\quad\mbox{for}\quad i,j\in\{1,2, \ldots, \ell\}.$$ 
Since $\mathcal{C}^2T$ has the homotopy type of a (banana) graph, we see
\begin{equation}\label{productostriviales00}
\alpha_{ij}\alpha_{ik}=0\text{ \ for any indices $i,j,k$}.
\end{equation}
Further, the commutativity of~(\ref{classes}) ensures
\begin{equation}\label{ensures}
\Psi^{*}(\alpha_{ij})=0, \mbox{\ \  if \ \ }i\neq j,
\end{equation}
whereas $\Psi^{*}(\alpha_{ii})\neq 0$ for, in fact, the product 
\begin{equation}\label{topcohomologicalclass}
t^{*}:=\Psi^{*}(\alpha_{11}\alpha_{22}\cdots \alpha_{\ell\ell})=\Psi^{*}(\alpha_{11}) \Psi^{*}(\alpha_{22})\cdots\Psi^{*}(\alpha_{\ell\ell})
\end{equation}
generates the top cohomology group of the torus $\prod_{j=1}^{\ell}\mathcal{C}^2Y_j$. In particular, $\alpha_{11}\alpha_{22}\cdots\alpha_{\ell\ell}\neq 0$, so that the cup-length $\cl(\mathcal{C}^{2\ell}T)$ is no smaller than $\ell$. The latter condition is easily extended to $\mathcal{C}^nT$:

\begin{prop}\label{cl}
The cup-length of $\mathcal{C}^nT$ satisfies $\,\cl(\mathcal{C}^nT)\geq\ell$.
\end{prop}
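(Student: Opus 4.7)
The plan is to extend the construction developed just before the proposition from $\mathcal{C}^{2\ell}T$ to all $\mathcal{C}^nT$ (note $n\geq 2\ell$ automatically, since $\ell\leq\lfloor n/2\rfloor$). First I would choose pairwise distinct points $y_1,\ldots,y_{n-2\ell}$ of $T$ lying outside $\bigcup_{j=1}^{\ell}Y_j$; this is possible because the $Y_j$'s can be taken to be arbitrarily small neighborhoods of the essential vertices $v_1,\ldots,v_\ell$ and, $T$ being $n$-sufficiently subdivided, contains plenty of additional vertices available as fillers. With this choice at hand I would define
\[
\Psi\colon\prod_{j=1}^{\ell}\mathcal{C}^2Y_j\longrightarrow\mathcal{C}^nT,\qquad \Psi\bigl((x_1,x_2),\ldots,(x_{2\ell-1},x_{2\ell})\bigr)=(x_1,\ldots,x_{2\ell},y_1,\ldots,y_{n-2\ell}),
\]
together with the projections $\Phi_i\colon\mathcal{C}^nT\to\mathcal{C}^2T$ given by $\Phi_i(x_1,\ldots,x_n)=(x_{2i-1},x_{2i})$ for $i=1,\ldots,\ell$, and set $\alpha_{ij}:=\Phi_i^{*}(\alpha_j)\in H^1(\mathcal{C}^nT)$.

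The analogue of diagram~(\ref{classes}) (now with $\mathcal{C}^nT$ in place of $\mathcal{C}^{2\ell}T$) is strictly commutative by construction, so the calculation leading to~(\ref{topcohomologicalclass}) carries over verbatim. It gives $\Psi^{*}(\alpha_{ii})=\mathrm{pr}_i^{*}(\alpha_i|_{\mathcal{C}^2Y_i})$, which is a generator of $H^1(\mathcal{C}^2Y_i)\cong\mathbb{Z}$ by Corollary~\ref{vertices} (together with the fact that, by Theorem~\ref{farbergeneralized}, $\mathcal{C}^2Y_i\simeq B_{2m_2(Y_i)}=B_2\simeq S^1$). Consequently the product $\Psi^{*}(\alpha_{11})\cdots\Psi^{*}(\alpha_{\ell\ell})$ generates the top cohomology of the $\ell$-torus $\prod_{j=1}^{\ell}\mathcal{C}^2Y_j$, and in particular is nonzero. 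This forces $\alpha_{11}\alpha_{22}\cdots\alpha_{\ell\ell}\neq 0$ in $H^*(\mathcal{C}^nT)$, which gives $\cl(\mathcal{C}^nT)\geq\ell$ and completes the proof.

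The single delicate point is the existence of the filler points $y_1,\ldots,y_{n-2\ell}$, pairwise distinct and disjoint from $\bigcup_j Y_j$; this is the step I would double-check, but it is straightforward once the $Y_j$'s are shrunk to sufficiently small open neighborhoods of the essential vertices and one invokes $n$-sufficient subdivision of $T$. Everything else is a formal transposition of the argument already spelled out for $\mathcal{C}^{2\ell}T$, with the projections $\Phi_i$ merely reading off the chosen pair of coordinates from the longer configuration.
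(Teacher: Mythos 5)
Your proof is correct, and it is essentially the paper's argument with a slight repackaging. The paper proves the proposition by showing the coordinate projection $p\colon\mathcal{C}^nT\to\mathcal{C}^{2\ell}T$ admits a homotopy section (shrinking an edge near a leaf to make room for $n-2\ell$ extra particles), so $p^{*}$ is injective, and then pulling back the already-established nonzero product $\alpha_{11}\cdots\alpha_{\ell\ell}\in H^{*}(\mathcal{C}^{2\ell}T)$. You instead build the $\ell$-torus map $\Psi$ directly into $\mathcal{C}^nT$ by inserting $n-2\ell$ fixed filler points, and redefine the $\Phi_i$ and the classes $\alpha_{ij}$ on $\mathcal{C}^nT$; the verification that $\Psi^{*}(\alpha_{11}\cdots\alpha_{\ell\ell})$ is a generator of the top cohomology of the torus then goes through exactly as in~(\ref{topcohomologicalclass}). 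The two arguments encode the same geometric idea (your choice of filler points is precisely the paper's homotopy section, applied before rather than after restricting to the torus), and your delicate point about the existence of the $y_k$'s is handled correctly. The one thing worth noting is that the paper's factoring through $\mathcal{C}^{2\ell}T$ is not an inessential detour: the injectivity of $p^{*}$ (and of $(p^s)^{*}$) is reused verbatim in the proofs of Corollary~\ref{TCFn} and Theorem~\ref{elteofinal}, so the paper's organization amortizes that step across the whole section, whereas your version would have to re-introduce it there.
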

\begin{proof}
The map $p:\mathcal{C}^nT\longrightarrow \mathcal{C}^{2\ell}T$ given by $p(x_1,\ldots,x_n)=(x_1,\ldots,x_{2\ell})$ admits a homotopy section (if $e$ is an edge in $T$ connecting a vertex $v$ of degree one to some other vertex, then we can shrink $e$ away from $v$ to make room for $n-2\ell$ additional particles), so the induced map $p^{*}$ in cohomology is injective. The result follows since $0\neq p^*(\alpha_{11} \cdots \alpha_{\ell\ell})=p^*(\alpha_{11}) \cdots p^*(\alpha_{\ell\ell})$.
\end{proof}

\begin{coro}\label{cat}
The inequality in~(\ref{porarriba}) and the first inequality in~(\ref{iiddhgsusi}) are in fact equalities.
\end{coro}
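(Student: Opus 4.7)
The plan is to obtain the result by sandwiching $\hdim(\mathcal{C}^nT)$ and $\cat(\mathcal{C}^nT)$ between $\ell$ on both sides, using the cup-length bound from Proposition~\ref{cl} on the lower end and the pre-established inequality~(\ref{porarriba}) on the upper end, with Proposition~\ref{ulbTCnrararar} bridging the two.

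More concretely, I would first note that $\mathcal{C}^nT$ is path-connected (it has the homotopy type of the connected CW model produced by Farley–Sabalka, cf.~Corollary~\ref{dim}), so that the $c=0$ instance of Proposition~\ref{ulbTCnrararar} applies and yields
\begin{equation*}
\cl(\mathcal{C}^nT)\leq \cat(\mathcal{C}^nT)\leq \hdim(\mathcal{C}^nT).
\end{equation*}
Combining this with the upper bound $\hdim(\mathcal{C}^nT)\leq \ell$ from~(\ref{porarriba}) and with the lower bound $\cl(\mathcal{C}^nT)\geq \ell$ from Proposition~\ref{cl}, we obtain
\begin{equation*}
\ell\leq \cl(\mathcal{C}^nT)\leq \cat(\mathcal{C}^nT)\leq \hdim(\mathcal{C}^nT)\leq \ell,
\end{equation*}
which forces equality throughout. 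In particular $\hdim(\mathcal{C}^nT)=\ell$ and $\cat(\mathcal{C}^nT)=\ell$, which are exactly the two asserted equalities.

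No significant obstacle is expected here: all the hard work has already been done in establishing~(\ref{porarriba}) (via Farley–Sabalka's model together with Corollary~\ref{dim} and Remark~\ref{hshiausy}) and in producing the nontrivial cup-product $\alpha_{11}\alpha_{22}\cdots\alpha_{\ell\ell}$ detected on the torus $\prod_{j=1}^{\ell}\mathcal{C}^2Y_j$ via~(\ref{topcohomologicalclass}) and the homotopy-section argument of Proposition~\ref{cl}. The corollary is thus just the formal closing of the sandwich.
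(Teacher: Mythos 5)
Your argument is exactly the paper's: the chain $\ell\leq\cl(\mathcal{C}^nT)\leq\cat(\mathcal{C}^nT)\leq\hdim(\mathcal{C}^nT)\leq\ell$, with the lower bound from Proposition~\ref{cl} and the upper bound from~(\ref{porarriba}). The proof is correct and essentially identical to the one in the paper.
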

\begin{proof}
$\ell\leq\cl(\mathcal{C}^nT)\leq\cat(\mathcal{C}^nT)\leq \hdim(\mathcal{C}^nT)\leq\ell$.
\end{proof}

In view of the discussion in Remark~\ref{casoexcepcional}, the only assertion that remains to be proved is the equality $\TC_s(\mathcal{C}^nT)=s\ell$ for $s\geq2$, $n\geq4$ and $m\geq2$ (so $\ell\geq2$), hypotheses that are in force from this point on.

We need the following preparations. Let $t$ be the top homology class in the domain of the map $\Psi$ in~(\ref{classes}), and write $z=\Psi_{*}(t)\in H_{\ell}(\mathcal{C}^{2\ell}T)$. From~(\ref{topcohomologicalclass}) we get
\begin{equation}\label{productequal1}
1=\langle t^*,t\rangle=\langle\Psi^{*}(\alpha_{11}\alpha_{22}\cdots\alpha_{\ell\ell}),t\rangle = \langle\alpha_{11}\alpha_{22}\cdots\alpha_{\ell\ell},z\rangle,
\end{equation}
whereas~(\ref{ensures}) yields  
\begin{equation}\label{productequalzero}
0=\langle 0,t\rangle=\langle\Psi^*(\alpha_{1r_{1}}\alpha_{2r_{2}}\cdots\alpha_{\ell r_\ell}),t\rangle=\langle\alpha_{1r_{1}}\alpha_{2r_{2}}\cdots\alpha_{\ell r_\ell},z\rangle,
\end{equation}
if $r_{i}\neq i$ for some $i=1,\ldots, \ell$. For a permutation $\sigma\in \mathcal{S}_\ell$, consider the commutative diagram
\begin{equation}\label{diagramsigma}
\xymatrix{\prod\limits_{j=1}^{\ell} \mathcal{C}^2Y_j\ar[r]^(.56){\Psi}\ar[dr]_{\Psi_{\sigma}\,=\,g_\sigma\hspace{.35mm}\circ \Psi} & \mathcal{C}^{2\ell}T\ar[d]_{g_\sigma}\ar[dr]^{\Phi_{\sigma(i)}}&\\
&\mathcal{C}^{2\ell}T\ar[r]_{\Phi_{i}}&\mathcal{C}^2T,} 
\end{equation} 
where $g_\sigma(x_{1},x_{2}, \ldots, x_{2\ell-1},x_{2\ell})=(x_{2\sigma(1)-1}, x_{2\sigma(1)}, \ldots, x_{2\sigma(\ell)-1}, x_{2\sigma(\ell)})$. Lastly, set $z_{\sigma}:=(g_\sigma)_{*}(z)$.
\begin{prop}\label{productsigma}
For $\sigma\in\mathcal{S}_\ell$,
\begin{equation}
\nonumber
\langle\alpha_{1r_{1}}\alpha_{2r_{2}}\cdots\alpha_{\ell r_{\ell}},z_{\sigma}\rangle=\begin{cases}
\pm1, & \text{if } r_{j}=\sigma(j) \text{ for each } j;\\
0, & \text{otherwise.}\end{cases}
\end{equation}
\end{prop}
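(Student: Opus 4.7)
The plan is to pull everything back through the commutative diagram~(\ref{diagramsigma}) to the torus $\prod_{j=1}^\ell\mathcal{C}^2Y_j$, where the pairings~(\ref{productequal1}) and~(\ref{productequalzero}) already give us full control.

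First I would use naturality of the Kronecker pairing to move $g_\sigma$ across: since $z_\sigma=(g_\sigma)_*(z)$,
\begin{equation*}
\langle\alpha_{1r_1}\cdots\alpha_{\ell r_\ell},z_\sigma\rangle
=\langle g_\sigma^{*}(\alpha_{1r_1}\cdots\alpha_{\ell r_\ell}),z\rangle
=\langle g_\sigma^{*}(\alpha_{1r_1})\cdots g_\sigma^{*}(\alpha_{\ell r_\ell}),z\rangle,
\end{equation*}
using that $g_\sigma^*$ is a ring map. Next I would identify $g_\sigma^*(\alpha_{i r_i})$: by the definition $\alpha_{i r_i}=\Phi_i^*(\alpha_{r_i})$ and by the commutativity $\Phi_i\circ g_\sigma=\Phi_{\sigma(i)}$ encoded in~(\ref{diagramsigma}), I get $g_\sigma^*(\alpha_{i r_i})=(\Phi_i\circ g_\sigma)^*(\alpha_{r_i})=\Phi_{\sigma(i)}^{*}(\alpha_{r_i})=\alpha_{\sigma(i),\,r_i}$.

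Substituting and pushing along the other adjunction (since $z=\Psi_*(t)$) gives
\begin{equation*}
\langle\alpha_{1r_1}\cdots\alpha_{\ell r_\ell},z_\sigma\rangle
=\bigl\langle\Psi^{*}\bigl(\alpha_{\sigma(1),r_1}\cdots\alpha_{\sigma(\ell),r_\ell}\bigr),t\bigr\rangle
=\bigl\langle\Psi^{*}(\alpha_{\sigma(1),r_1})\cdots\Psi^{*}(\alpha_{\sigma(\ell),r_\ell}),t\bigr\rangle.
\end{equation*}
Now~(\ref{ensures}) says $\Psi^{*}(\alpha_{ij})=0$ whenever $i\neq j$, so any single factor with $\sigma(i)\neq r_i$ kills the product. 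This forces the pairing to vanish unless $r_i=\sigma(i)$ for every $i$, giving the ``otherwise'' clause immediately.

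In the remaining case $r_i=\sigma(i)$ for all $i$, the product in question is $\Psi^{*}(\alpha_{\sigma(1),\sigma(1)})\cdots\Psi^{*}(\alpha_{\sigma(\ell),\sigma(\ell)})$, i.e.\ the product of the degree-one classes $\Psi^{*}(\alpha_{jj})$ taken in the order dictated by $\sigma$. Since each class lies in $H^1$, reordering them produces the sign $\mathrm{sgn}(\sigma)$, and the resulting class equals $\pm\,\Psi^{*}(\alpha_{11}\alpha_{22}\cdots\alpha_{\ell\ell})=\pm\,t^{*}$. Combined with~(\ref{productequal1}), this yields $\langle\alpha_{1r_1}\cdots\alpha_{\ell r_\ell},z_\sigma\rangle=\pm\langle t^{*},t\rangle=\pm1$, as required. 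The only delicate point is tracking the sign via graded-commutativity of the cup product in the torus $\prod_j\mathcal{C}^2Y_j$, but that is routine since every $\Psi^{*}(\alpha_{jj})$ has degree one; there is no genuine obstacle in the argument.
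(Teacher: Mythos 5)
Your proposal is correct and follows essentially the same route as the paper: both use the commutativity $\Phi_i\circ g_\sigma=\Phi_{\sigma(i)}$ to get $g_\sigma^*(\alpha_{ij})=\alpha_{\sigma(i)j}$, then move $g_\sigma$ across the Kronecker pairing. The only cosmetic difference is that you push one step further back to the torus (pairing with $t$ and invoking~(\ref{ensures}) directly), whereas the paper cites the already-derived pairings~(\ref{productequal1}) and~(\ref{productequalzero}); since those are established by exactly this pullback, the two arguments coincide.
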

\begin{proof}
The commutativity of the right-hand triangle in~(\ref{diagramsigma}) gives $g_\sigma^*(\alpha_{ij})=\alpha_{\sigma(i)j}$, so
$$\langle\alpha_{1r_{1}}\alpha_{2r_{2}}\cdots\alpha_{\ell r_{\ell}},z_{\sigma}\rangle=
\langle\alpha_{\sigma(1)r_{1}}\alpha_{\sigma(2)r_{2}}\cdots\alpha_{\sigma(\ell) r_{\ell}},z\rangle=
\begin{cases}
\pm1, & \text{if } r_{j}=\sigma(j) \text{ for each } j;\\
0, & \text{otherwise.}
\end{cases}$$
in view of~(\ref{productequal1}) and (\ref{productequalzero}).
\end{proof}

It will be convenient to first analyze $\TC_s(\mathcal{C}^nT)$ for $s=2$.
\begin{lema}\label{TCF2l}
We have $\TC_2(\mathcal{C}^{2\ell}T)=2\ell.$
\end{lema}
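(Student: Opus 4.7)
My plan is to sandwich $\TC_2(\mathcal{C}^{2\ell}T)$ between $2\ell$ on both sides. The upper bound is immediate from Proposition~\ref{ulbTCnrararar} together with Corollary~\ref{cat}: indeed, $\TC_2(\mathcal{C}^{2\ell}T)\le 2\cat(\mathcal{C}^{2\ell}T)=2\ell$. The content lies in the matching lower bound $\TC_2(\mathcal{C}^{2\ell}T)\ge 2\ell$, which I will obtain from Proposition~\ref{ulbTCnrararar} by exhibiting $2\ell$ two-fold zero-divisors in $H^1(\mathcal{C}^{2\ell}T\times\mathcal{C}^{2\ell}T)$ whose cup product is non-zero, so that $\zcl_2(\mathcal{C}^{2\ell}T)\ge 2\ell$.

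Following the usual recipe, I form the zero-divisors $\bar\alpha_{ij}:=1\otimes\alpha_{ij}-\alpha_{ij}\otimes 1$ for $i,j\in\{1,\dots,\ell\}$, and I fix a fixed-point-free permutation $\tau\in\mathcal{S}_\ell$, which exists because $\ell\ge 2$ (for instance, the cyclic shift $\tau(i)\equiv i+1\pmod\ell$). The proposed non-vanishing product is
$$
P\;:=\;\prod_{i=1}^{\ell}\bar\alpha_{ii}\,\bar\alpha_{i,\tau(i)}\;\in\;H^{2\ell}(\mathcal{C}^{2\ell}T\times\mathcal{C}^{2\ell}T).
$$
The first step of the verification is to simplify each factor: since $\tau(i)\ne i$, the vanishing relation~(\ref{productostriviales00}) gives $\alpha_{ii}\alpha_{i,\tau(i)}=0$, and a direct computation therefore reduces each factor to
$$
\bar\alpha_{ii}\,\bar\alpha_{i,\tau(i)}\,=\,\alpha_{i,\tau(i)}\otimes\alpha_{ii}\;-\;\alpha_{ii}\otimes\alpha_{i,\tau(i)}.
$$
Expanding the product of these $\ell$ binomials produces a sum over subsets $S\subseteq\{1,\dots,\ell\}$, whose $S$-th summand is (up to Koszul signs) $\pm\prod_i\alpha_{i,\rho_S(i)}\otimes\prod_i\alpha_{i,\pi_S(i)}$, where $\rho_S(i)=i$ if $i\in S$ and $\rho_S(i)=\tau(i)$ otherwise, while $\pi_S$ interchanges the roles of $i$ and $\tau(i)$ according to membership in $S$.

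To finish, I will pair $P$ with the homology class $z_{\mathrm{id}}\otimes z_\tau\in H_{2\ell}(\mathcal{C}^{2\ell}T\times\mathcal{C}^{2\ell}T)$ and apply Proposition~\ref{productsigma} twice. The only potentially delicate step, which I expect to be the main (though modest) obstacle, is the verification that \emph{only one} of the $2^\ell$ summands in the expansion of $P$ pairs non-trivially with $z_{\mathrm{id}}\otimes z_\tau$. Indeed, Proposition~\ref{productsigma} demands simultaneously $\rho_S=\mathrm{id}$ and $\pi_S=\tau$, and both conditions force $S=\{1,\dots,\ell\}$ precisely because $\tau$ has no fixed points. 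That single surviving summand pairs to $\pm 1$ by two applications of Proposition~\ref{productsigma}, so $\langle P,z_{\mathrm{id}}\otimes z_\tau\rangle=\pm 1$ and therefore $P\ne 0$. Note that Koszul signs are irrelevant for the non-vanishing conclusion, so no sign bookkeeping is required.
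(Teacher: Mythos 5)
Your argument is correct and takes essentially the same route as the paper: the same $2\ell$ zero-divisors $\overbar{\alpha_{ii}}$, $\overbar{\alpha_{i\tau(i)}}$ with $\tau$ the cyclic shift, the same evaluation against $z\otimes z_\tau$, and the same appeals to~(\ref{productostriviales00}) and Proposition~\ref{productsigma}. The only difference is organizational: you group the factors into $\ell$ pairs $\overbar{\alpha_{ii}}\,\overbar{\alpha_{i\tau(i)}}$ and simplify each pair first, whereas the paper multiplies the two blocks $\prod_i\overbar{\alpha_{ii}}$ and $\prod_i\overbar{\alpha_{i\tau(i)}}$ and then collapses the double sum, leading to the identical single sum over subsets $S\subseteq[\ell]$.
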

\begin{proof}
For a cohomology class $\alpha\in H^*(\mathcal{C}^{2\ell}T)$, let $\overbar{\alpha}\in H^{*}(\mathcal{C}^{2\ell}T)\otimes H^{*}(\mathcal{C}^{2\ell}T)$ stand for the zero-divisor $\overbar{\alpha}= \alpha\otimes 1 - 1\otimes \alpha$. By~(\ref{iiddhgsusi}) and Proposition~\ref{ulbTCnrararar}, it suffices to check 
\begin{equation}\label{product}
\left(\overbar{\alpha_{11}}\cdots\overbar{\alpha_{(\ell-1)(\ell-1)}}\overbar{\alpha_{\ell\ell}}\right) \left(\overbar{\alpha_{12}}\cdots\overbar{\alpha_{(\ell-1)\ell}}\overbar{\alpha_{\ell1}}\right)\neq 0.
\end{equation}
In what follows, the cohomology class $\overbar{\alpha_{\ell1}}$ will also be denoted as $\overbar{\alpha_{\ell(\ell+1)}}$. Put $[\ell]:=\{1,2,\ldots,\ell\}$, then
\begin{align*}
(\overbar{\alpha_{11}}\cdots\overbar{\alpha_{\ell\ell}})( \overbar{\alpha_{12}}\cdots\overbar{\alpha_{\ell(\ell+1)}})
=&\left( \sum_{\substack{S_{1}\subseteq [\ell]}} \pm \prod_{i\in S_{1}}\alpha_{ii}\otimes \prod_{i\notin S_{1}}\alpha_{ii}\right) \left(\sum_{\substack{S_{2}\subseteq [\ell]}}\pm \prod_{i\in S_{2}}\alpha_{i(i+1)}\otimes \prod_{i\notin S_{2}}\alpha_{i(i+1)}\right)\\
=&\sum_{\substack{S_{1}\subseteq[\ell]\\S_{2}\subseteq[\ell]}}\pm\left(\prod_{i\in S_1}\alpha_{ii}\right)\left(\prod_{i\in S_2}\alpha_{i(i+1)}\right)\otimes\left( \prod_{i\notin S_1}\alpha_{ii}\right)\left(\prod_{i\notin S_2}\alpha_{i(i+1)}\right)\\
=&\sum_{\substack{S\subseteq[\ell]}}\pm\left(\,\prod_{i\in S}\alpha_{ii}\right)\left(\,\prod_{i\notin S}\alpha_{i(i+1)}\right)\otimes\left(\,\prod_{i\notin S}\alpha_{ii}\right)\left(\,\prod_{i\in S}\alpha_{i(i+1)}\right),
\end{align*}
where the last equality holds because an $(S_1,S_2)$-indexed summand in the next-to-last expression vanishes unless $S_1\cap S_2=\varnothing=S_1^c\cap S_2^c$, in view of~(\ref{productostriviales00}). In particular, if $\tau\in\mathcal{S}_\ell$ is the cycle given by $\tau(i)=i+1$ for $i<\ell$, and $\tau(\ell)=1$, we get
\begin{align*}
\left\langle\rule{0mm}{4mm}(\overbar{\alpha_{11}}\cdots\overbar{\alpha_{\ell\ell}})\right.(\overbar{\alpha_{12}}&\cdots\overbar{\alpha_{\ell(\ell+1)}}), \left.\rule{0mm}{4mm}z\otimes z_{\tau}\right\rangle\\
=&\sum_{\substack{S\subseteq[\ell]}}\pm \left\langle\left(\,\prod_{i\in S}\alpha_{ii}\right)\left(\,\prod_{i\notin S}\alpha_{i(i+1)}\right)\otimes \left(\,\prod_{i\notin S}\alpha_{ii}\right)\left(\,\prod_{i\in S}\alpha_{i(i+1)}\right), z\otimes z_{\tau}\right\rangle\\
=&\sum_{\substack{S\subseteq[\ell]}}\pm \left\langle\left(\,\prod_{i\in S}\alpha_{ii}\right)\left(\,\prod_{i\notin S}\alpha_{i(i+1)}\right),z\right\rangle \left\langle\left(\,\prod_{i\notin S}\alpha_{ii}\right)\left(\,\prod_{i\in S}\alpha_{i(i+1)}\right), z_{\tau}\right\rangle\\
=&\pm\left\langle\,\prod_{i=1}^{\ell}\alpha_{ii},z\right\rangle\left\langle\,\prod_{i=1}^{\ell}\alpha_{i(i+1)},z_{\tau}\right\rangle=\,\pm 1,
\end{align*}
where the last two equalities use Proposition~\ref{productsigma}. This yields~(\ref{product}).
\end{proof}

\begin{coro}\label{TCFn} 
We have $\TC_2({\mathcal{C}^nT})=2\ell$.
\end{coro}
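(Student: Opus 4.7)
The plan is to reduce the case of $n$ particles to the already-settled case of $2\ell$ particles via the coordinate-forgetting projection $p\colon\mathcal{C}^nT\to\mathcal{C}^{2\ell}T$ used in Proposition~\ref{cl}. Recall that because $T$ has at least one degree-one vertex $v$ with an incident edge $e$, we can ``park'' the $n-2\ell$ extra particles in a shrunken portion of $e$ away from $v$; this yields a homotopy section $s\colon\mathcal{C}^{2\ell}T\to\mathcal{C}^nT$ of $p$. The first step, then, is to record this and observe that $p\times p\colon\mathcal{C}^nT\times\mathcal{C}^nT\to\mathcal{C}^{2\ell}T\times\mathcal{C}^{2\ell}T$ also admits a homotopy section $s\times s$, so the induced map $(p\times p)^*$ is injective in cohomology.

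Next I would transport the zero-divisors constructed for the proof of Lemma~\ref{TCF2l}. For each $i,j\in\{1,\dots,\ell\}$ set $\beta_{ij}:=p^{*}(\alpha_{ij})\in H^{1}(\mathcal{C}^nT)$ and $\overbar{\beta_{ij}}:=\beta_{ij}\otimes 1-1\otimes\beta_{ij}$. Because pullback along $p\times p$ commutes with the diagonals (i.e.\ $\Delta_{\mathcal{C}^nT}\circ p=(p\times p)\circ\Delta_{\mathcal{C}^{2\ell}T}$), each $\overbar{\beta_{ij}}$ is a genuine 2-nd zero-divisor for $\mathcal{C}^nT$. Moreover, by naturality,
\begin{equation*}
(p\times p)^{*}\!\left[\bigl(\overbar{\alpha_{11}}\cdots\overbar{\alpha_{\ell\ell}}\bigr)\bigl(\overbar{\alpha_{12}}\cdots\overbar{\alpha_{\ell(\ell+1)}}\bigr)\right]
=\bigl(\overbar{\beta_{11}}\cdots\overbar{\beta_{\ell\ell}}\bigr)\bigl(\overbar{\beta_{12}}\cdots\overbar{\beta_{\ell(\ell+1)}}\bigr),
\end{equation*}
where the index $\ell+1$ is again read as $1$. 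Lemma~\ref{TCF2l} says the left-hand input is nonzero, and injectivity of $(p\times p)^{*}$ forces the right-hand product to be nonzero as well. Consequently $\zcl_{2}(\mathcal{C}^nT)\geq 2\ell$, hence $\TC_{2}(\mathcal{C}^nT)\geq 2\ell$ by Proposition~\ref{ulbTCnrararar}.

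Combining this lower bound with the upper bound $\TC_{2}(\mathcal{C}^nT)\leq 2\ell$ coming from~(\ref{iiddhgsusi}) finishes the proof. There is essentially no obstacle: the only thing to be careful about is checking that the section of $p$ constructed in Proposition~\ref{cl} really does land in the ordered (i.e.\ collision-free) configuration space after squaring, which is immediate since the ``parked'' particles never invade the portion of $T$ used by the first $2\ell$ coordinates. The reason this reduction is completely clean (so that no new computation is needed beyond Lemma~\ref{TCF2l}) is that the whole zero-divisor argument was carried out using classes in $H^{1}(\mathcal{C}^{2\ell}T)$ pulled back from $\mathcal{C}^{2}T$-factors via the $\Phi_i$'s, and these pull back tautologically along $p$.
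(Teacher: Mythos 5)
Your argument is correct and is essentially identical to the paper's own proof: both use the coordinate-forgetting projection $p\colon\mathcal{C}^nT\to\mathcal{C}^{2\ell}T$ together with its homotopy section, pull the zero-divisors $\overbar{\alpha_{ii}}$ and $\overbar{\alpha_{i(i+1)}}$ back along $p\times p$, and conclude from injectivity of $(p\times p)^*$ and Lemma~\ref{TCF2l} that the pulled-back product is nonzero, giving $\zcl_2(\mathcal{C}^nT)\geq 2\ell$. You have simply spelled out the naturality and diagonal-compatibility checks that the paper leaves implicit.
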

\begin{proof}
We have noted that the projection $p:\mathcal{C}^nT\longrightarrow \mathcal{C}^{2\ell}T$ given by $p(x_1,\ldots,x_n)=(x_1,\ldots,x_{2\ell})$ admits a homotopy section. Thus any cartesian power $p^s\colon (\mathcal{C}^nT)^s\to(\mathcal{C}^{2\ell}T)^s$ induces a monomorphism in cohomology groups. The result then follows from Proposition~\ref{ulbTCnrararar} and~(\ref{iiddhgsusi}) by noticing that the pullbacks under $p\times p$ of the classes $\overbar{\alpha_{ii}}$ and $\overbar{\alpha_{i(i+1)}}$ for $1\leq i\leq \ell$, are zero-divisors with non-zero product.
\end{proof}

\begin{teo}\label{elteofinal}
For $s\geq2$ (and in the presence of the assumptions set forth after the proof of Corollary~\ref{cat}), $\TC_s(\mathcal{C}^nT)=s\ell$.
\end{teo}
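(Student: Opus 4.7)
The upper bound $\TC_{s}(\mathcal{C}^{n}T)\leq s\ell$ is already recorded in~(\ref{iiddhgsusi}), so only the matching lower bound has to be established. My plan is to generalize the strategy of Lemma~\ref{TCF2l} and Corollary~\ref{TCFn}: exhibit $s\ell$ explicit zero-divisors in $H^{1}((\mathcal{C}^{2\ell}T)^{s})$ with nonzero cup-product, and then transport the resulting cohomological inequality to $\mathcal{C}^{n}T$ by means of the homotopy section of $p\colon\mathcal{C}^{n}T\to\mathcal{C}^{2\ell}T$.

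Let $\pi_{k}\colon(\mathcal{C}^{2\ell}T)^{s}\to\mathcal{C}^{2\ell}T$ denote the $k$-th projection, write $\gamma^{(k)}=\pi_{k}^{*}(\gamma)$ for $\gamma\in H^{*}(\mathcal{C}^{2\ell}T)$, and take
$$
A_{k,i}=\alpha_{i,i}^{(k)}-\alpha_{i,i}^{(1)}\ \ (2\leq k\leq s,\ 1\leq i\leq\ell),\qquad B_{i}=\alpha_{i,i+1}^{(2)}-\alpha_{i,i+1}^{(1)}\ \ (1\leq i\leq\ell),
$$
with second subscripts interpreted modulo~$\ell$. Each of these $(s-1)\ell+\ell=s\ell$ classes is visibly a zero-divisor for the $s$-th iterated diagonal, and I claim that the $s\ell$-fold cup-product
$$
P:=\prod_{k=2}^{s}\prod_{i=1}^{\ell}A_{k,i}\,\cdot\,\prod_{i=1}^{\ell}B_{i}
$$
is nonzero in $H^{s\ell}((\mathcal{C}^{2\ell}T)^{s})$. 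To see this, I will evaluate $P$ on the homology class $w=z\otimes z_{\tau}\otimes z^{\otimes(s-2)}$, where $\tau\in\mathcal{S}_{\ell}$ is the cyclic shift $i\mapsto i+1\pmod\ell$ and $z$, $z_{\tau}$ are the classes built in~(\ref{classes})--(\ref{diagramsigma}).

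The expansion of $P$ as a sum of $2^{s\ell}$ signed monomials is tamed by two local vanishing principles. First, Corollary~\ref{cat} gives $H^{>\ell}(\mathcal{C}^{2\ell}T)=0$, so any surviving monomial must contribute exactly degree $\ell$ to each of the $s$ tensor coordinates; this already forces every $A_{k,i}$ with $k\geq 3$ to deposit $\alpha_{i,i}$ in coordinate $k$. Second, relation~(\ref{productostriviales00}) gives $\alpha_{i,j}\alpha_{i,k}=0$, so within each tensor coordinate the first indices of the $\alpha$-factors must be pairwise distinct; applied to coordinates $1$ and $2$ this forces, for each $i$, the pair $(A_{2,i},B_{i})$ to split as ``one to coordinate~$1$ and one to coordinate~$2$,'' a split parameterized by a subset $S\subseteq[\ell]$. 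Finally, Proposition~\ref{productsigma} applied in coordinate~$1$ requires the induced partial assignment $i\mapsto i$ on one part of $S$ and $i\mapsto i+1$ on the other to define the identity permutation, while the analogous condition in coordinate~$2$ requires the complementary assignment to define $\tau$; these two conditions are simultaneously satisfiable only for a single value of $S$. The unique surviving summand then pairs with $w$ to $\pm 1$ by Proposition~\ref{productsigma}, so $P\neq 0$ and $\zcl_{s}(\mathcal{C}^{2\ell}T)\geq s\ell$.

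The passage from $\mathcal{C}^{2\ell}T$ to $\mathcal{C}^{n}T$ is now formal, mimicking Corollary~\ref{TCFn}: the power map $p^{s}\colon(\mathcal{C}^{n}T)^{s}\to(\mathcal{C}^{2\ell}T)^{s}$ admits a homotopy section, hence $(p^{s})^{*}$ is injective on cohomology; each $(p^{s})^{*}A_{k,i}$ and $(p^{s})^{*}B_{i}$ is again a zero-divisor; and their product, being $(p^{s})^{*}P$, is nonzero. This yields $\zcl_{s}(\mathcal{C}^{n}T)\geq s\ell$, and together with~(\ref{iiddhgsusi}) gives the asserted equality $\TC_{s}(\mathcal{C}^{n}T)=s\ell$. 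The main obstacle will be the combinatorial bookkeeping in the expansion of $P$, since one must simultaneously track (i) the top-degree constraint in each tensor coordinate, (ii) the square-zero relations that kill first-index repetitions, and (iii) the permutation-matching conditions of Proposition~\ref{productsigma}; but the banana-graph shape of $\mathcal{C}^{2}T$, together with the choice of $\tau$ as a full cyclic shift, makes each of these constraints local and collapses the sum to a single surviving monomial.
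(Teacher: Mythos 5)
Your proposal is correct and follows essentially the same line of argument as the paper: reduce to $\mathcal{C}^{2\ell}T$ via the homotopy section of $p$, exhibit a nonzero $s\ell$-fold product of degree-one zero-divisors by evaluating against the test class $z\otimes z_\tau\otimes z^{\otimes(s-2)}$, and let the top-degree vanishing $H^{>\ell}(\mathcal{C}^{2\ell}T)=0$ from Corollary~\ref{cat}, the square-zero relation~(\ref{productostriviales00}), and Proposition~\ref{productsigma} collapse the expansion to a single surviving term. The only cosmetic difference is that for tensor slots $k\geq3$ you take ``star-shaped'' differences $\alpha_{ii}^{(k)}-\alpha_{ii}^{(1)}$ where the paper's Proposition~\ref{yacasi} takes ``chain-shaped'' consecutive differences $\alpha_{ii}^{(k-1)}-\alpha_{ii}^{(k)}$; both choices are forced to deposit $\prod_i\alpha_{ii}$ in each slot $k\geq3$ by the degree constraint, so the analysis is unchanged.
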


Theorem~\ref{elteofinal} follows from Proposition~\ref{yacasi} below in the same way as Corollary~\ref{TCFn} follows from Lemma~\ref{TCF2l}.
\begin{prop}\label{yacasi}
For $s\geq2$, $\TC_s(\mathcal{C}^{2\ell}T)=s\ell$.
\end{prop}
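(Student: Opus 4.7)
The upper bound $\TC_s(\mathcal{C}^{2\ell}T)\leq s\ell$ is free: it comes from combining $\cat(\mathcal{C}^{2\ell}T)=\ell$ (Corollary~\ref{cat}) with the general estimate $\TC_s\leq s\cdot\cat$ recorded in~(\ref{iiddhgsusi}). So the task is the matching lower bound, which in view of Proposition~\ref{ulbTCnrararar} reduces to exhibiting $s\ell$ zero-divisors in $H^*((\mathcal{C}^{2\ell}T)^s)$ whose cup product is nonzero.

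The plan is to generalize directly the construction in Lemma~\ref{TCF2l}. We would fix $s$ permutations $\sigma_1=\mathrm{id},\sigma_2,\ldots,\sigma_s\in\mathcal{S}_\ell$ and, for each pair $(k,i)\in\{1,\ldots,s\}\times\{1,\ldots,\ell\}$, take as zero-divisor an ``$s$-fold bar'' of the form $\pi_{a(k)}^{*}\alpha_{i\sigma_k(i)}-\pi_{b(k)}^{*}\alpha_{i\sigma_k(i)}\in H^1((\mathcal{C}^{2\ell}T)^s)$, for appropriately chosen coordinates $a(k)\neq b(k)$. The most symmetric choice, which specializes to the two-batch construction of Lemma~\ref{TCF2l} when $s=2$, is $a(k)=k$ and $b(k)=k+1\pmod{s}$, so that each factor of $(\mathcal{C}^{2\ell}T)^s$ receives contributions from exactly two consecutive batches. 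We then aim to pair the resulting total product with a homology class of the form $z_{\tau_1}\otimes\cdots\otimes z_{\tau_s}\in H_{s\ell}((\mathcal{C}^{2\ell}T)^s)$ for a suitable $s$-tuple of permutations $\tau_p\in\mathcal{S}_\ell$, using Proposition~\ref{productsigma} on each factor.

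After expanding the $s\ell$-fold cup product and discarding the many summands that vanish by the relations $\alpha_{ij}\alpha_{ij'}=0$ of~(\ref{productostriviales00}), a combinatorial argument parallel to the collapse $S_2=S_1^{c}$ in the proof of Lemma~\ref{TCF2l} shows that the nonvanishing summands are indexed by a single subset $S\subseteq[\ell]$, the $p$-th tensor factor of each such summand having the shape $\prod_{i\in S}\alpha_{i\sigma_p(i)}\cdot\prod_{i\notin S}\alpha_{i\sigma_{p-1}(i)}$. Setting $\tau_p:=\sigma_p$, Proposition~\ref{productsigma} makes the summand $S=[\ell]$ automatically contribute $\pm 1$.

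The main obstacle is to prevent cancellations from the remaining values of $S$. With the choice $\tau_p=\sigma_p$, a subset $S\neq[\ell]$ contributes to the $p$-th pairing only when $\sigma_p$ and $\sigma_{p-1}$ agree on $S^{c}$. Hence it suffices to choose the $\sigma_k$ so that each product $\sigma_p\sigma_{p-1}^{-1}$ (read cyclically in $p$) is fixed-point-free on $[\ell]$; consecutive powers of a long cycle in $\mathcal{S}_\ell$ should do, with a small ad-hoc tweak when $\ell$ happens to divide $s-1$ (to control the wrap-around $\sigma_s\to\sigma_1$). This isolates $S=[\ell]$ as the unique contributor, the total pairing is $\pm 1$, and one concludes $\zcl_s(\mathcal{C}^{2\ell}T)\geq s\ell$, as needed.
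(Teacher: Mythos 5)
Your construction---a cyclically arranged family of $s$ batches of zero-divisors, with batch $k$ supported on slots $k$ and $k+1\pmod s$, and the cyclic chain of containments $S_1\subseteq S_s\subseteq\cdots\subseteq S_2\subseteq S_1$ (forced by~(\ref{productostriviales00})) collapsing the choice-sets to a single $S$---is a genuinely different route from the paper's. The paper places the two batches of Lemma~\ref{TCF2l} on slots $(1,2)$ and then chains the remaining $s-2$ batches linearly along slots $(2,3),\ldots,(s-1,s)$, invoking $H^{\ell+1}(\mathcal{C}^{2\ell}T)=0$ to force each later batch to contribute only ``forward''; your symmetric variant sidesteps that homotopy-dimension input and is fine in principle.

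However, the final step contains a genuine error. You require every cyclic quotient $\sigma_p\sigma_{p-1}^{-1}$ to be fixed-point-free, and propose consecutive powers of a long cycle together with an ad-hoc tweak when $\ell\mid s-1$. This requirement is not always satisfiable: the $s$ cyclic quotients necessarily multiply (in cyclic order) to the identity, so for $\ell=2$---where the unique derangement of $\{1,2\}$ is the odd transposition---they cannot all be derangements when $s$ is odd. Thus for $\ell=2$ and $s=3$ (a case squarely within the scope of the proposition) your chosen family of permutations does not exist, and no variant of the powers-of-a-cycle trick will produce it. The gap is easily repaired from your own intermediate observation: a subset $S\neq[\ell]$ can contribute only if \emph{every} consecutive pair $\sigma_{p-1},\sigma_p$ agrees on $S^c$, so it suffices that for each $i\in[\ell]$ some consecutive pair disagrees at $i$. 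Since $\sigma_1=\mathrm{id}$, simply taking $\sigma_2$ to be any derangement (with the remaining $\sigma_p$ arbitrary) already isolates $S=[\ell]$, for all $\ell\geq2$ and $s\geq2$; but as written, the last step of your argument fails.
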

\begin{proof}
We can assume $s\geq3$. For  $i,j\in[\ell]$, set 
$$
\widehat{\alpha_{ij}}= \overbar{\alpha_{ij}}\otimes \underbrace{1\otimes \cdots\otimes 1}_{s-2} \in H^{*}(\mathcal{C}^{2\ell}T)^{\otimes s}.
$$
By~(\ref{iiddhgsusi}) and Proposition~\ref{ulbTCnrararar}, it suffices to check  the non-triviality of the product
\begin{align}\label{higherproduct}
\left(\rule{0mm}{4mm}\widehat{\alpha_{11}}\cdots\widehat{\alpha_{\ell\ell}}\right) \left(\rule{0mm}{4mm} \widehat{\alpha_{12}}\cdots\widehat{\alpha_{\ell(\ell+1)}}\right)&
\left(\,\prod_{i=1}^{\ell}(1\otimes\alpha_{ii}\otimes 1\otimes \cdots\otimes 1 -1\otimes 1\otimes \alpha_{ii}\otimes 1\otimes\cdots\otimes 1)\right)\cdots\nonumber\\
&\cdots\left(\,\prod_{i=1}^{\ell}(1\otimes1\otimes\cdots\otimes\alpha_{ii}\otimes 1 -1\otimes1\otimes\cdots\otimes 1\otimes\alpha_{ii})\right).
\end{align}
From the proof of Lemma~\ref{TCF2l}, the product of the first $2\ell$ factors in~(\ref{higherproduct}) takes the form  
\begin{align*}
\sum_{\substack{S\subseteq[\ell]}}\pm\left(\,\prod_{i\in S}\alpha_{ii}\right)\left(\,\prod_{i\notin S}\alpha_{i(i+1)}\right)\otimes\left(\prod_{i\notin S}\alpha_{ii}\right)\left(\prod_{i\in S}\alpha_{i(i+1)}\right)\otimes 1\otimes\cdots \otimes 1,
\end{align*}
and since $H^{\ell+1}(\mathcal{C}^{2\ell}T)=0$ (in view of Corollary~\ref{cat}),~(\ref{higherproduct}) becomes
$$
\sum_{\substack{S\subseteq[\ell]}}\pm\prod_{i\in S}\alpha_{ii}\prod_{i\notin S}\alpha_{i(i+1)}\otimes\prod_{i\notin S}\alpha_{ii}\prod_{i\in S}\alpha_{i(i+1)}\otimes \prod_{i=1}^{\ell}\alpha_{ii}\otimes\cdots \otimes \prod_{i=1}^{\ell}\alpha_{ii}.
$$
The non-triviality of the latter expression follows from Proposition~\ref{productsigma} by evaluating at the homology class $z\otimes z_{\tau}\otimes z\otimes\cdots \otimes z$.
\end{proof}


\end{document}